\documentclass{amsart}
\usepackage{amssymb}
\usepackage{nicefrac}
\usepackage{ytableau}
\usepackage{tikz}
\usepackage{mfabacus}
\usepackage{tikz-3dplot}
\usetikzlibrary{shapes}

\title{A further generalisation of bar-core partitions}
\author[D. Yates]{Dean Yates}
\address{Queen Mary, University of London\\
Mile End Road\\
Bethnal Green\\
London\\
E1 4NS}
\email{d.yates@qmul.ac.uk}
\date{01 June 2021} 

\textwidth=130truemm
\normalparindent=12pt
\parindent=\normalparindent

\frenchspacing

\def\ps@firstpage{%
  \ps@plain
  \def\@oddfoot{{\scriptsize    
      ISSN\string: \@Eissn\hfill\url{\journalURL}
}}
  \let\@evenfoot\@oddfoot
  \def\@oddhead{\@serieslogo\hss}%
  \let\@evenhead\@oddhead 
}

\emergencystretch2em
\arraycolsep 1.5pt

\newtheorem{thm}{Theorem}[section]
\newtheorem{lem}[thm]{Lemma}
\newtheorem{prop}[thm]{Proposition}
\newtheorem{cor}[thm]{Corollary}

\theoremstyle{definition}
\newtheorem*{defn}{Definition}
\newtheorem*{exam}{Example}

\theoremstyle{remark}
\newtheorem*{rema}{Remark}

\begin{document}

\begin{abstract}
When $p$ and $q$ are coprime odd integers no less than 3, Olsson proved that the $q$-bar-core of a $p$-bar-core is again a $p$-bar-core. We establish a generalisation of this theorem: that the $p$-bar-weight of the $q$-bar-core of a bar partition $\lambda$ is at most the $p$-bar-weight of $\lambda$. We go on to study the set of bar partitions for which equality holds and show that it is a union of orbits for an action of a Coxeter group of type $\tilde C_{\nicefrac{(p-1)}{2}}\times\tilde C_{\nicefrac{(q-1)}{2}}$. We also provide an algorithm for constucting a bar partition in this set with a given $p$-bar-core and $q$-bar-core. 
\end{abstract}

\maketitle

\tableofcontents

\section{Introduction}
\noindent
The study of the set of core partitions, those partitions whose Young diagrams are without $s$-hooks for some natural number $s$, has revealed a great deal about the representation theory of the symmetric groups: when $s$ is a prime, the set of $s$-cores is an index for the $s$-blocks of all symmetric groups of a given defect, and the relationships between these blocks are dictated by the combinatorics of $s$-cores. The purpose of this paper is to establish analogues of the results in Fayers' `A generalisation of core partitions' \cite{F1} for \textit{bar partitions}, i.e. partitions with distinct parts (or simply finite subsets of $\mathbb{N}$). Although the definitions differ, the ideas in \cite{F1} translate very well to the notion of $p$\textit{-bar-cores}, including a generalisation of a result established by Olsson in \cite{O2}: the $q$-bar-core of a $p$-bar-core is a $p$-bar-core. The motivation behind the study of bar partitions is their correspondence to \textit{projective}, or \textit{spin representations} of the symmetric group \cite{HH}. The aforementioned generalisation of Olsson's theorem confirms a relationship between these representations.

We will begin with a few definitions, which will seem very familiar to those acquainted with the representation theory of the symmetric group, that have been adapted to suit our purposes. Using some basic results, we consider an action of $\mathfrak{W}_p$, the Weyl group of type $\tilde{C}_{(p-1)/2}$, on the set of bar partitions $\mathcal{P}_2$, and discover some interesting symmetry. We then consider the problem of constructing the smallest bar partition with a given $p$-bar-core and $q$-bar-core, for coprime odd $p,q\geq3$. We finish by investigating the orbits of the Yin and Yang partitions \cite{BO} under the group action of $\mathfrak{W}_p\times\mathfrak{W}_q$. 

\section{Definitions}
\noindent
A \textbf{bar partition} $\lambda\in\mathcal{P}_2$ is a decreasing sequence of distinct positive integers (often referred to as a 2-regular or strict partition). For odd integers $p\geq3$, the $p$\textbf{-runner abacus} \cite{F2} has $p$ infinite vertical runners numbered from left to right $\nicefrac{(p+1)}{2},\nicefrac{(p+3)}{2},\dots,p-1,0,1,\dots,\nicefrac{(p-1)}{2}$, with the positions on runner $i$ labelled with the integers with $p$ residue $i$, increasing down the runner, so that position $x+1$ appears directly to the right of position $x$ (for $x\not\equiv\nicefrac{(p-1)}{2}$ mod $p$). We obtain a visual representation of $\lambda$ on the $p$-runner abacus by placing a bead on position $x$ for each $x\in\lambda$ and each integer $x<0$ such that $-x\not\in\lambda$; position 0 remains empty. 

\begin{exam}
The bar partition $(9,8,7,5,3)$ has the following bead configuration on the 5-runner abacus (we indicate the zero position with a white bead): 
\begin{center}\abacus(vvvvv,bbbbb,bbbnn,nbnbn,bbonn,bnbnb,bbnnn,nnnnn,vvvvv)\end{center}
\end{exam}

Let $\mathcal{A}(\lambda)$ denote the set containing all integers that label positions occupied by beads in the bead configuration for $\lambda\in\mathcal{P}_2$ on the $p$-runner abacus: 
$$x\in\mathcal{A}(\lambda)\Leftrightarrow\begin{cases}x\in\lambda,&x>0\text{, or}\\-x\not\in\lambda,&x<0.\end{cases}$$
Note that this is independent from the choice of $p$. 
\newline
For odd integers $p\geq3$, removing a $p$\textbf{-bar} from $\lambda\in\mathcal{P}_2$ means either \newline
(i) removing $x\in\lambda$ such that $0\leq x-p\not\in\lambda$, and replacing $x$ with $x-p$ if $x\neq p$; or \newline
(ii) removing two parts $x,p-x\in\lambda$ (where $0<x<p$).
\newline
 ($p$ must be odd because of the incompatible possibility that a bar partition could have a $2p$-bar but not a $p$-bar, e.g. $p=4$ and the partition $(6,2)$.) \newline
In terms of the abacus, removing a $p$-bar from $\lambda$ corresponds to moving a bead at position $x$ to position $x-p$ (replacing $x\in\mathcal{A}(\lambda)$ with $x-p$), then moving the bead at position $p-x$ to position $-x$ (replacing $p-x\in\mathcal{A}(\lambda)$ with $-x$). 
\newline
When it is not possible to remove any $p$-bars from $\lambda$, i.e. when $x-p\in\mathcal{A}(\lambda)$ for all $x\in\mathcal{A}(\lambda)$, we say that $\lambda$ is a $p$\textbf{-bar-core}, and we denote the set of $p$-bar-cores by $\overline{C}_p$. 
\newline
Since removing a $p$-bar always corresponds to moving beads up on their runners to unoccupied positions, we have reached the bead configuration of a $p$-bar-core when all beads are moved up their runners as far as possible. The order in which these moves are made is irrelevant; we always end up at the same bead configuration. Hence we may define \textbf{the} $p$\textbf{-bar-core} of a bar partition $\lambda$, which we denote by $\overline{\lambda}_p$. The number of $p$-bars which can be successively removed from $\lambda$ is the $p$\textbf{-bar-weight} of $\lambda$, and we denote this quantity by $\overline{\text{wt}}_p(\lambda)$; denoting by $|\mu|$ the sum of the parts of the bar partition $\mu$, 
$$\overline{\text{wt}}_p(\lambda):=\tfrac{|\lambda|-|\overline{\lambda}_p|}{p}.$$ 
The number of bead moves needed to reach the bead configuration for $\overline{\lambda}_p$ from the bead configuration for $\lambda$ is equal to twice the $p$-bar-weight of $\lambda$, because removing a $p$-bar corresponds to two moves of the beads. The $p$-bar-weight of $\lambda$ is therefore equal to half the number of pairs $(x,a)\in\mathcal{A}(\lambda)\times\mathbb{N}$ such that $x-ap\not\in\mathcal{A}(\lambda)$. 

\begin{exam}
The $5$-bar-core of the bar partition from the previous example is $\overline{(9,8,7,5,3)}_5=(4,3)$, and has the following bead configuration on the $5$-runner abacus: 
\begin{center}\abacus(vvvvv,bbbbb,bbbbb,bbbnn,bbonn,bbnnn,nnnnn,nnnnn,vvvvv)\end{center}
\end{exam}

We are now equipped with tools analogous to those James used in his seminal book on the representation theory of the symmetric group via the combinatorics of (not necessarily strict) partitions \cite{JK}. For the benefit of readers unfamiliar with James' work, we outline the theory of \textit{rim-hooks} and \textit{cores} here.

We may visually represent a partition $\alpha=(\alpha_1,\alpha_2,\dots,\alpha_r)$, i.e. a decreasing sequence of (not necessarily distinct) positive integers $\alpha_1\geq\alpha_2\geq\dots\geq\alpha_r$, by its \textbf{Young diagram} $[\alpha]$, which has $\alpha_i$  nodes in the $i^\text{th}$ row, for each $i\in\{1,\dots,r\}$, with each row starting in the first column. The \textbf{hook-length} $h_{i,j}$ of the $(i,j)$-node, in the $i^\text{th}$ row and $j^\text{th}$ column of $[\alpha]$, is found by adding the number of $(k,j)$-nodes with $k\geq i$ to the number of $(i,l)$-nodes with $l>j$. We refer to the $(i,j)$-nodes with $(i+1,j+1)\not\in[\alpha]$ as the \textbf{rim} of $[\alpha]$. The $h_{i,j}$ pairwise adjacent nodes along the rim of $[\alpha]$ from the lowest node in the $j^\text{th}$ column, i.e. the $(k,j)$-node with $k$ maximal, to the $(i,\alpha_i)$-node are collectively called a \textbf{rim} $h_{i,j}$-\textbf{hook}. Whenever a diagram $[\alpha]$ has an $(i,j)$-node with hook-length $s:=h_{i,j}$, we may remove a rim s-hook from $[\alpha]$ to obtain the Young diagram of a different partition. If instead $[\alpha]$ has no nodes with hook-length $s$, then we say that the partition $\alpha$ is an $s$-\textbf{core}. 

\begin{exam}
Below is the Young diagram $[(4,4,2,1)]$, which has just one 5-hook. The $(1,2)$-node is highlighted with a $\bullet$, the $(1,2)$-hook is highlighted in red, and the removable nodes of the corresponding rim 5-hook are highlighted with $\times$'s:

\begin{center}
\ytableaushort{\none \bullet \none \times,\none \times \times \times, \none \times}*{4,4,2,1}*[*(red)]{1+3,1+1,1+1}
\end{center}
\end{exam}

Adopting the convention that $\alpha_i=0$ for each $i$ greater than some fixed $r\in\mathbb{N}$, the strictly decreasing sequence of integers $\alpha_1-1+k>\alpha_2-2+k>\dots$, for some $k\in\mathbb{Z}$, is called a \textbf{beta}-\textbf{set} for the partition $\alpha=(\alpha_1,\alpha_2,\dots)$, and is denoted by $\mathcal{B}^\alpha_k$. \newline
James' $s$\textbf{-abacus} has $s$ runners extending infinitely in both directions, with the leftmost runner labelled by multiples of $s$, and the position directly to the right of $i$ labelled by $i+1$. A bead configuration is associated with a partition $\alpha$ via the beta-set $\mathcal{B}^\alpha:=\mathcal{B}^\alpha_0$ by placing a bead at the position labelled by $\alpha_i-i$ for each $i\in\mathbb{N}$. \newline
Removing a rim $s$-hook from $[\alpha]$ then corresponds to removing an element $x\in\mathcal{B}^\alpha$ such that $x-s\not\in\mathcal{B}^\alpha$, and replacing $x$ with $x-s$. Thus we obtain the bead configuration for an $s$-core by moving the beads in the configuration for $\alpha$ on the $s$-abacus up their runners as far as possible. Since the order in which we move the beads is irrelevant, there is only one $s$-core which can be obtained from a partition $\alpha$ by removing rim $s$-hooks, and we denote \textbf{the} $s$\textbf{-core} of $\alpha$ by $\tilde{\alpha}_s$. The number of moves needed to reach the bead configuration of $\tilde{\alpha}_s$ from the configuration of $\alpha$, or equivalently, the number of rim $s$-hooks which can be removed from the diagram $[\alpha]$, is the $s$-\textbf{weight} of $\alpha$; we denote this quantity by $\text{wt}_s(\alpha)$. \newline
The $s$-\textbf{quotient} of $\alpha$ is the $s$-tuple of partitions corresponding to the bead configuration of each runner of the $s$-abacus as $s$ separate 1-abaci. Each partition $\alpha$ is uniquely determined by its $s$-core $\tilde{\alpha}_s$ and its $s$-quotient. 

\begin{exam}
Below are the configurations of the partition $\alpha:=(4,4,2,1)$ (on the left), and $\tilde{\alpha}_5=(3,1,1,1)$ (on the right), on James' 5-abacus. As noted in the previous example, the 5-weight of $\alpha$ is 1. A beta-set for $\alpha$ is $\mathcal{B}^{\alpha}_0=\{3,2,-1,-3,-5,-6,\dots\}$, we have $\mathcal{B}^{\tilde{\alpha}_5}_0=\{2,-1,-2,-3,-5,-6,\dots\}$, and the 5-quotient of $\alpha$ is $(\varnothing,\varnothing,\varnothing,(1),\varnothing)$ (where $\varnothing$ denotes the empty partition). 
\begin{center}
\abacus(vvvvv,bbbbb,bnbnb,nnbbn,nnnnn,vvvvv)
\hspace{15mm}
\abacus(vvvvv,bbbbb,bnbbb,nnbnn,nnnnn,vvvvv)
\end{center}
\end{exam}

Removing rim $s$-hooks may seem rather arbitrary, but this combinatorial notion has a strong connection with the modular representation theory of the symmetric group, as the two ordinary irreducible represntations corresponding to the partitions $\alpha$ and $\beta$ belong to the same $s$-\textit{block} of $s$-modular irreducible constituents if and only if $\tilde{\alpha}_s=\tilde{\beta}_s$. This important result was first conjectured by Nakayama, and should be referred to as the Brauer-Robinson Theorem after those who first proved it in 1947.

Now that we have seen how the combinatorics of bar partitions is related to James' work, we introduce a way to encode bar partitions, just as the $s$-core and $s$-quotient encode partitions \cite{JK}.

Define the $p$\textbf{-set} \cite{F3} of a bar partition $\lambda$ to be the set $\{\Delta_{i\text{ mod }p}\lambda|i\equiv0,1,\dots,p-1\}$, where $\Delta_{i\text{ mod }p}\lambda$ is the smallest integer $x\equiv i$ modulo $p$ such that $x\not\in\mathcal{A}(\overline{\lambda}_p)$. Since $x\in\mathcal{A}(\overline{\lambda}_p)\Leftrightarrow-x\not\in\mathcal{A}(\overline{\lambda}_p)$, for any bar partition $\lambda$ and $k\not\equiv0$ (mod $p$) we have $\Delta_{k\text{ mod }p}\lambda+\Delta_{-k\text{ mod }p}\lambda=p$, so all of the elements in the $p$-set of any bar partition sum to $\nicefrac{p(p-1)}{2}$. The $p$-set provides a useful way to encode the $p$-bar-core of a bar partition. 

Next we will introduce the $p$\textbf{-quotient} of $\lambda\in\mathcal{P}_2$ \cite{O1}, which is the $p$-tuple 
$$\mathcal{Q}_p(\lambda):=(\lambda^{(0\text{ mod }p)},\dots,\lambda^{(p-1\text{ mod }p)}).$$  
(We will drop the `mod $p$' in our notation for both the $p$-set and $p$-quotient when it is clear which $p$ we are referring to.) \newline
The parts of the bar partition $\lambda^{(0\text{ mod }p)}$ are the elements of the set $\{\nicefrac{x}{p}|x\in\lambda,x\in p\mathbb{Z}\}$. 
\newline
For $j\not\equiv0$ (mod $p$), the $i^\text{th}$ part of the (not necessarily strict) partition $\lambda^{(j\text{ mod }p)}$ is equal to the number of empty spaces above the $i^\text{th}$ lowest bead on runner $j$ in the bead configuration for $\lambda$ on the $p$-runner abacus. 
\newline
It follows from the definition of the $p$-runner abacus that for each $j\not\equiv0$ (mod $p$), the partition $\lambda^{(-j\text{ mod }p)}$ is equal to $(\lambda^{(j\text{ mod }p)})'$, the \textit{conjugate} of the partition $\lambda^{(j\text{ mod }p)}$, the parts of which are the lengths of the columns in the Young diagram for $\lambda^{(j\text{ mod }p)}$. 

\begin{exam}
The bar partition $(9,8,7,5,3)$ has 5-set $\{0,-4,-3,8,9\}$ as its 5-bar-core is $(4,3)$. While the $5$-quotient $\mathcal{Q}_5((4,3))$ contains only empty partitions (as all $p$-quotients of $p$-bar-cores do), the $5$-quotient of $\lambda:=(9,8,7,5,3)$ is $((1),(1),(3),(1,1,1),(1))$. As remarked above, we have $\lambda^{(j\text{ mod }p)}=(\lambda^{(-j\text{ mod }p)})'$ for $j\not\equiv0$ (mod $p$), and this is illustrated below with the Young diagrams of the conjugate partitions $\lambda^{(3\text{ mod }5)}=(1,1,1)$ and $\lambda^{(2\text{ mod }5)}=(3)=(1,1,1)'$, and the self-conjugate partition $\lambda^{(1\text{ mod }5)}=(1)=(1)'=\lambda^{(4\text{ mod }5)}$, which also happens to be the bar partition $\lambda^{(0\text{ mod }5)}$: 
\begin{align*}&\abacus(vvvvv,bbbbb,bbbnn,nbnbn,bbonn,bnbnb,bbnnn,nnnnn,vvvvv)
&&\raisebox{1.8cm}{\ydiagram{1,1,1}}&&\raisebox{1.8cm}{\ydiagram{1}}&&\raisebox{1.8cm}{\ydiagram{3}}\end{align*}
\end{exam}

\section{Properties of the $p$-quotient}
\noindent
These definitions are extremely useful, and Olsson proved that every bar partition is uniquely determined by its $p$-bar-core and $p$-quotient \cite[Proposition 2.2]{O1}.

Later we will utilise the following properties of $\mathcal{Q}_p(\lambda)$. 
\begin{lem}\label{lem3}
Suppose $\lambda\in\mathcal{P}_2$ and $c,p$ are odd postive integers, with $p\geq 3$. 
\begin{enumerate}
\item $\overline{\text{wt}}_{cp}(\lambda)=\overline{\text{wt}}_{c}(\lambda^{(0\text{ mod }p)})+\tfrac{1}{2}\sum_{j=1}^{p-1}\text{wt}_c(\lambda^{(j\text{ mod }p)})$; in particular, $\lambda$ is a $cp$-bar-core if and only if $\lambda^{(0\text{ mod }p)}$ is a $c$-bar-core and every other component of the $p$-quotient of $\lambda$ is a $c$-core. 
\item ${\overline{(\lambda^{(0\text{ mod }p)}})}_c=(\overline{\lambda}_{cp})^{(0\text{ mod }p)}$, and for $j\not\equiv0\:(\text{mod }p)$, the $c$-core of $\lambda^{(j\text{ mod }p)}$ is equal to $(\overline{\lambda}_{cp})^{(j\text{ mod }p)}$.
\end{enumerate}
\end{lem}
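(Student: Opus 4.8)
The plan is to reduce everything to the level of abaci, where removing a $cp$-bar from $\lambda$ should correspond to a single, clean move on the $p$-runner picture of $\lambda$, and then track how that move distributes across the components $\lambda^{(0)},\dots,\lambda^{(p-1)}$ of the $p$-quotient. The first step is to understand the $p$-runner abacus position of $\lambda$ in terms of its $p$-set and $p$-quotient: for $j\not\equiv 0$, the beads on runner $j$ sit at positions $\equiv j\pmod p$, and by definition the $i^{\text{th}}$ lowest bead has exactly $\lambda^{(j)}_i$ empty spaces above it, while the `baseline' (the position of the lowest bead of the $p$-bar-core $\overline\lambda_p$ on that runner) is governed by $\Delta_j\lambda$. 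For $j\equiv 0$, the beads on runner $0$ are at positions $p\mathbb Z$, and reading that runner as a $1$-abacus (after deleting position $0$, which stays empty and plays the role of the `$0$' of a bar-abacus) gives exactly the bar partition $\lambda^{(0)}$. So the whole configuration factors as: one bar-partition runner (runner $0$) plus $(p-1)/2$ conjugate pairs of ordinary-partition runners.

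Next I would check how a single $cp$-bar removal acts on this picture. By the abacus description of $p$-bar removal given in the excerpt (applied with $cp$ in place of $p$), removing a $cp$-bar from $\lambda$ is: move a bead at position $x$ up to $x-cp$ when $x\neq cp$ (and this is empty), then move the bead at $cp-x$ down to $-x$; or, in the second case, remove the two parts $x$ and $cp-x$. Now I split on $x\bmod p$. If $p\mid x$, both $x$ and (in case (ii)) $cp-x$ lie on runner $0$, and the move is precisely the removal of a rim $c$-hook, or of a pair of parts summing to $c$, from the bar partition $\lambda^{(0)}=\lambda^{(0\bmod p)}$ — i.e. a $c$-bar removal there. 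If $x\equiv j\not\equiv 0$, then $x\mapsto x-cp$ is a move of $c$ steps up runner $j$ into an empty slot, which is exactly the removal of a rim $c$-hook from $\lambda^{(j)}$; and the simultaneous move of the bead at $cp-x\equiv -j$ down to $-x$ is — by the conjugacy relation $\lambda^{(-j)}=(\lambda^{(j)})'$ already established in the excerpt — the \emph{same} rim $c$-hook removal viewed on the conjugate runner. (Case (ii) for $x\not\equiv 0$ similarly removes matching rim $c$-hooks from runner $j$ and runner $p-j$.) In all cases a single $cp$-bar removal either (a) removes one $c$-bar from $\lambda^{(0)}$, contributing $1$ to $\overline{\text{wt}}_c(\lambda^{(0)})$, or (b) removes one rim $c$-hook from each of $\lambda^{(j)}$ and $\lambda^{(p-j)}$ for some $j\not\equiv 0$, contributing $1$ to each of $\text{wt}_c(\lambda^{(j)})$ and $\text{wt}_c(\lambda^{(p-j)})$, i.e. contributing $1$ to $\tfrac12\sum_{j=1}^{p-1}\text{wt}_c(\lambda^{(j)})$ since those two summands move together. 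Summing over a maximal sequence of $cp$-bar removals gives part (1)'s weight formula; the ``in particular'' clause is the case where $\lambda$ admits no $cp$-bar removal, which by the above happens exactly when no runner admits any move, i.e. when $\lambda^{(0)}$ is a $c$-bar-core and every other $\lambda^{(j)}$ is a $c$-core.

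For part (2), the key point is that \emph{the final configuration is independent of the order of moves} (this is the well-definedness of $\overline\lambda_{cp}$, already in hand). Run all $cp$-bar removals; on runner $0$ this pushes beads up as far as possible, which by James' description of the $1$-abacus / bar-abacus is precisely taking the $c$-bar-core of $\lambda^{(0)}$, giving $\overline{(\lambda^{(0\bmod p)})}_c=(\overline\lambda_{cp})^{(0\bmod p)}$; on each runner $j\not\equiv 0$ it pushes beads up as far as possible, which is taking the $c$-core of the partition read off that runner, giving $\widetilde{(\lambda^{(j\bmod p)})}_c=(\overline\lambda_{cp})^{(j\bmod p)}$. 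One must also observe that the $p$-bar-core baseline of each runner (the $\Delta$-data, equivalently the $p$-set) is unchanged, since $cp$-bar removals never move a bead off one runner onto another — this is immediate as $cp\equiv 0\pmod p$ — so $(\overline\lambda_{cp})$ really does have the same $p$-bar-core-per-runner structure as $\lambda$, and reading off its $p$-quotient makes sense componentwise.

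The routine parts are bookkeeping; \textbf{the main obstacle} is the bookkeeping in the pairing $j\leftrightarrow p-j$: one has to be careful that a move initiated ``on runner $j$'' (by picking $x\equiv j$) is literally the same $cp$-bar as the one initiated on runner $p-j$, so that the $\tfrac12\sum$ is not double-counting but rather correctly counting each $cp$-bar once, and that case (ii) (removing $x,cp-x$ with $x\not\equiv 0$) is also accounted for exactly once in the same tally — and, dually, that an $x\equiv 0$ with $cp-x$ also $\equiv 0$ in case (ii) corresponds to removing a genuine pair of parts of $\lambda^{(0)}$ summing to $c$, i.e. a type-(ii) $c$-bar removal there, not two separate things. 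Pinning down this correspondence between $\{cp\text{-bar removals from }\lambda\}$ and $\{c\text{-bar removals from }\lambda^{(0)}\}\sqcup\{\,c\text{-hook removals from the pair }(\lambda^{(j)},\lambda^{(p-j)})\,\}$ is where the real content of Lemma~\ref{lem3} lies, and once it is established both (1) and (2) follow by summing / by passing to the terminal configuration.
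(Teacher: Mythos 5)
Your proposal is correct and follows essentially the same route as the paper: both reduce a single $cp$-bar removal, via the residue of $x$ modulo $p$, to either a $c$-bar removal on $\lambda^{(0\text{ mod }p)}$ or a matched pair of rim $c$-hook removals on $\lambda^{(j\text{ mod }p)}$ and $\lambda^{(p-j\text{ mod }p)}$, from which (1) follows by summing and (2) by passing to the terminal configuration. Your extra care about the $j\leftrightarrow p-j$ pairing and the invariance of the $p$-set is exactly the bookkeeping the paper's terser proof leaves implicit.
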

\begin{proof}
Removing a $cp$-bar from $\lambda$ means replacing an element $x\in\lambda$ with $x-cp$ (when $x-cp\not\in\mathcal{A}(\lambda)$), then replacing $cp-x\in\mathcal{A}(\lambda)$ with $-x$. \newline
If $x\equiv0$ (mod $p$), then this is equivalent to replacing $\nicefrac{x}{p}\in\mathcal{A}(\lambda^{(0)})$ with $\nicefrac{(x-cp)}{p}=\nicefrac{x}{p}-c$, and replacing $\nicefrac{(cp-x)}{p}\in\mathcal{A}(\lambda^{(0)})$ with $\nicefrac{-x}{p}=\nicefrac{(cp-x)}{p}-c$, i.e., removing a $c$-bar from $\lambda^{(0)}$. \newline
If $x\equiv j$ (mod $p$), then this is equivalent to replacing $\nicefrac{(x-j)}{p}\in\mathcal{B}^{\lambda^{(j)}}_r$ with $\nicefrac{((x-cp)-j)}{p}=\nicefrac{(x-j)}{p}-c$, and then replacing $\nicefrac{((cp-x)-(p-j))}{p}\in\mathcal{B}^{\lambda^{(p-j)}}_r$ with $\nicefrac{(-x-(p-j))}{p}=\nicefrac{((cp-x)-(p-j))}{p}-c$, thus removing a rim $c$-hook from both $\lambda^{(j)}$ and $\lambda^{(p-j)}$.
\end{proof}

\begin{rema}
Note that since $\lambda^{(-j\text{ mod }p)}=(\lambda^{(j\text{ mod }p)})'$ for $j\not\equiv0$ (mod $p$), we can rewrite the first part of the previous lemma as 
$$\overline{\text{wt}}_{cp}(\lambda)=\overline{\text{wt}}_{c}(\lambda^{(0\text{ mod }p)})+\sum_{j=1}^{\tfrac{p-1}{2}}\text{wt}_c(\lambda^{(j\text{ mod }p)}).$$ 
\end{rema}

\section{A level $q$ group action on bar partitions}
\noindent
Now we will consider an action of $\mathfrak{W}_p$, the affine Coxeter group of type $\tilde{C}_{\nicefrac{(p-1)}{2}}$, with generators $\delta_0, \dots, \delta_{\nicefrac{(p-1)}{2}}$ and relations 
\begin{align*}\delta_i^2&=1&&\text{for }0\leq{i}\leq\tfrac{p-1}{2},\\
\delta_i\delta_j&=\delta_j\delta_i&&\text{for }0\leq{i}<j-1\leq\tfrac{p-3}{2},\\
\delta_i\delta_{i+1}\delta_i&=\delta_{i+1}\delta_i\delta_{i+1}&&\text{for }1\leq{i}\leq\tfrac{p-5}{2},\\
\delta_0\delta_1\delta_0\delta_1&=\delta_1\delta_0\delta_1\delta_0&&\text{if }p>3,\\
\delta_{\nicefrac{(p-3)}{2}}\delta_{\nicefrac{(p-1)}{2}}\delta_{\nicefrac{(p-3)}{2}}\delta_{\nicefrac{(p-1)}{2}}&=\delta_{\nicefrac{(p-1)}{2}}\delta_{\nicefrac{(p-3)}{2}}\delta_{\nicefrac{(p-1)}{2}}\delta_{\nicefrac{(p-3)}{2}}&&\text{if }p>3.\end{align*}
For coprime odd integers $p,q\geq3$, we define a \textit{level} $q$ \textit{action} of $\mathfrak{W}_p$ on $\mathbb{Z}$ \cite{F3}: 
\begin{align*}\delta_0x&=\begin{cases}x-2q&\text{if }x\equiv q\:(\text{mod }p),\\
x+2q&\text{if }x\equiv-q\:(\text{mod }p),\\
x&\text{otherwise;}\end{cases}\\ 
\delta_ix&=\begin{cases}x-q&\text{if }x\equiv(i+1)q,-iq\:(\text{mod }p),\\
x+q&\text{if }x\equiv iq,-(i+1)q\:(\text{mod }p),\\
x&\text{otherwise,}\end{cases}&&\text{for }1\leq i\leq\tfrac{p-1}{2}.\end{align*}
For the rest of this paper, we will assume that $p$ and $q$ are coprime odd integers no less than 3. 
\begin{lem}
The above defines a group action of $\mathfrak{W}_p$ on $\mathbb{Z}$, and this can be extended to an action on $\mathcal{P}_2$.
\end{lem}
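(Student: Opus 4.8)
The plan is to verify the group action directly on $\mathbb{Z}$ by checking each Coxeter relation, and then to promote it to an action on $\mathcal{P}_2$ by transporting it through the abacus encoding. For the action on $\mathbb{Z}$: each generator $\delta_i$ is manifestly an involution, since it swaps the two residue classes $(i+1)q$ and $-iq$ (respectively $q$ and $-q$ for $\delta_0$) via the maps $x\mapsto x\mp q$, and fixes everything else; applying it twice returns $x$ to itself in every case. For the braid and commuting relations, the key observation is that $\delta_i$ only affects integers in the residue classes $\pm iq,\pm(i+1)q\pmod p$, so two generators $\delta_i,\delta_j$ whose relevant residue-class sets are disjoint automatically commute — this handles $\delta_i\delta_j=\delta_j\delta_i$ for $|i-j|\geq 2$. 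For the length-3 braid relations $\delta_i\delta_{i+1}\delta_i=\delta_{i+1}\delta_i\delta_{i+1}$, one traces an integer through both sides according to its residue in the set $\{\pm iq,\pm(i+1)q,\pm(i+2)q\}\pmod p$; since $p,q$ are coprime and $p\geq 3$, these residues are distinct (one must check $iq\not\equiv -(i+2)q$ etc., which reduces to $p\nmid(2i+2)q$, i.e. $p\nmid 2(i+1)$, valid in the stated range), and a short case analysis shows both words act as the same permutation of these classes, shifting beads by $\pm q$ consistently. The two length-4 relations involving $\delta_0$ and $\delta_{(p-1)/2}$ are analogous but use the special $\pm 2q$ behaviour at the ends of the Dynkin diagram; again coprimality of $p$ and $q$ guarantees the residues $q,-q,2q,-2q$ (resp. at the other end) are the ones being permuted, and one checks both four-letter words yield the same map.

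To extend the action to $\mathcal{P}_2$, I would use the fact, recorded in the excerpt, that a bar partition $\lambda$ is determined by (equivalently, identified with) its bead set $\mathcal{A}(\lambda)\subseteq\mathbb{Z}$, which satisfies the symmetry $x\in\mathcal{A}(\lambda)\Leftrightarrow -x\notin\mathcal{A}(\lambda)$ and the finiteness condition that $\mathcal{A}(\lambda)$ contains all sufficiently negative integers and no sufficiently large positive integers. I would define $w\cdot\lambda$ for $w\in\mathfrak{W}_p$ to be the bar partition whose bead set is $w(\mathcal{A}(\lambda)) = \{w(x) : x\in\mathcal{A}(\lambda)\}$. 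For this to be well-defined I must check two things: first, that $w(\mathcal{A}(\lambda))$ is again a valid bead set. The antisymmetry is preserved because each generator satisfies $\delta_i(-x) = -\delta_i(x)$ — this is visible from the defining formulas, since the residue classes come in the pairs $\{(i+1)q,-iq\}$ and $\{iq,-(i+1)q\}$ which are interchanged by $x\mapsto -x$, and the sign of the shift flips accordingly. The finiteness condition is preserved because each $\delta_i$ moves only finitely many integers (those in finitely many residue classes lying in a bounded range determined by where $\mathcal{A}(\lambda)$ transitions), and shifts them by a bounded amount; hence $w(\mathcal{A}(\lambda))$ differs from $\mathcal{A}(\lambda)$ in only finitely many places. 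Second, functoriality $(w_1w_2)\cdot\lambda = w_1\cdot(w_2\cdot\lambda)$ is immediate from the corresponding identity on $\mathbb{Z}$.

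The main obstacle I anticipate is the bookkeeping in the length-3 and length-4 braid relations: one must partition $\mathbb{Z}$ according to residues mod $p$, identify precisely which residue classes are touched by the relevant two or three generators, confirm these classes are pairwise distinct using $\gcd(p,q)=1$ together with $p\geq 3$ and the index constraints (the conditions $1\leq i\leq\frac{p-5}{2}$ etc. are exactly what prevent the special end-behaviour from colliding with the generic behaviour), and then compose the piecewise-linear maps carefully, tracking both the target residue class and the net additive shift. The potential for sign errors or off-by-one errors in the residue arithmetic is real, but there is no conceptual difficulty: each relation reduces to checking that two explicit permutations of a small finite set of residue classes, each decorated with an additive shift by a multiple of $q$, agree. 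I would present the involution and commuting relations in a line or two each, then do one representative braid relation in full and indicate that the remaining cases (including the two special four-term relations) follow by the same method with the obvious modifications.
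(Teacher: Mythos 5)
Your proposal is correct and follows essentially the same route as the paper: a direct case-by-case verification of each Coxeter relation by tracking residue classes modulo $p$ and the accompanying shifts by multiples of $q$, followed by extending the action to $\mathcal{P}_2$ via $\mathcal{A}(\delta_i\lambda):=\delta_i\mathcal{A}(\lambda)$, checking that boundedness and the symmetry $x\in\mathcal{A}(\lambda)\Leftrightarrow-x\notin\mathcal{A}(\lambda)$ are preserved. (One small slip in your description: $\delta_i$ interchanges the class pairs $\{(i+1)q,iq\}$ and $\{-iq,-(i+1)q\}$ rather than $(i+1)q$ with $-iq$, but this does not affect the argument.)
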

\begin{proof}
We must have $p>3$ for the fourth and fifth relations of the generators of $\mathfrak{W}_p$ to hold for the group action. 
The first relation $\delta_i^2=1$ is clear for all $i$. Moreover, the generators $\delta_i,\delta_j$ commute when $0\leq i<j-1\leq \nicefrac{(p-3)}{2}$ because they act on distinct congruence classes of integers modulo $p$. For the third relation, when $1\leq i\leq\nicefrac{(p-5)}{2}$ and $x\in\mathbb{Z}$ we have 
\begin{align*}
\delta_i\delta_{i+1}\delta_ix&=\left\{\begin{array}{l|l}\delta_i(x-2q)&x\equiv-iq\\
\delta_i(x+2q)&x\equiv iq\\
\delta_i(x-q)&x\equiv(i+1)q,(i+2)q\\
\delta_i(x+q)&x\equiv-(i+1)q,-(i+2)q\\
\delta_ix&\text{otherwise}\end{array}\right\}&&=\left\{\begin{array}{l|l}x-2q&x\equiv(i+2)q,-iq\\
x+2q&x\equiv iq,-(i+2)q\\
x&\text{otherwise}\end{array}\right.\\
&=\left\{\begin{array}{l|l}\delta_{i+1}(x-2q)&x\equiv(i+2)q\\
\delta_{i+1}(x+2q)&x\equiv-(i+2)q\\
\delta_{i+1}(x-q)&x\equiv-iq,-(i+1)q\\
\delta_{i+1}(x+q)&x\equiv iq,(i+1)q\\
\delta_{i+1}x&\text{otherwise}\end{array}\right\}&&=\delta_{i+1}\delta_i\delta_{i+1}x.\end{align*}
For the fourth and fifth relations, assuming $p>3$, 
\begin{align*}(\delta_0\delta_1)^2x&=\left\{\begin{array}{l|l}\delta_0\delta_1(x-3q)&x\equiv2q\\
\delta_0\delta_1(x+3q)&x\equiv-2q\\
\delta_0\delta_1(x-q)&x\equiv-q\\
\delta_0\delta_1(x+q)&x\equiv q\\
\delta_0\delta_1x&\text{otherwise}\end{array}\right\}&&=\left\{\begin{array}{l|l}x-4q&x\equiv2q\\
x+4q&x\equiv-2q\\
x-2q&x\equiv q\\
x+2q&x\equiv-q\\
x&\text{otherwise}\end{array}\right.\\
&=\left\{\begin{array}{l|l}\delta_1\delta_0(x-3q)&x\equiv q\\
\delta_1\delta_0(x+3q)&x\equiv-q\\
\delta_1\delta_0(x-q)&x\equiv2q\\
\delta_1\delta_0(x+q)&x\equiv-2q\\
\delta_1\delta_0x&\text{otherwise}\end{array}\right\}&&=(\delta_1\delta_0)^2x,\\
(\delta_{\nicefrac{(p-3)}{2}}\delta_{\nicefrac{(p-1)}{2}})^2x&=\left\{\begin{array}{l|l}\delta_{\nicefrac{(p-3)}{2}}\delta_{\nicefrac{(p-1)}{2}}(x-2q)&x\equiv\tfrac{(p+1)q}{2}\\
\delta_{\nicefrac{(p-3)}{2}}\delta_{\nicefrac{(p-1)}{2}}(x+2q)&x\equiv\tfrac{(p-1)q}{2}\\
\delta_{\nicefrac{(p-3)}{2}}\delta_{\nicefrac{(p-1)}{2}}(x-q)&x\equiv\tfrac{(p+3)q}{2}\\
\delta_{\nicefrac{(p-3)}{2}}\delta_{\nicefrac{(p-1)}{2}}(x+q)&x\equiv\tfrac{(p-3)q}{2}\\
\delta_{\nicefrac{(p-3)}{2}}\delta_{\nicefrac{(p-1}{2}}x&\text{otherwise}\end{array}\right\}&&=\left\{\begin{array}{l|l}x-3q&x\equiv\tfrac{(p+3)q}{2}\\
x+3q&x\equiv\tfrac{(p-3)q}{2}\\
x-q&x\equiv\tfrac{(p+1)q}{2}\\
x+q&x\equiv\tfrac{(p-1)q}{2}\\
x&\text{otherwise}\end{array}\right.\\
&=\left\{\begin{array}{l|l}\delta_{\nicefrac{(p-1)}{2}}\delta_{\nicefrac{(p-3)}{2}}(x-2q)&x\equiv\tfrac{(p+3)q}{2}\\
\delta_{\nicefrac{(p-1)}{2}}\delta_{\nicefrac{(p-3)}{2}}(x+2q)&x\equiv\tfrac{(p-3)q}{2}\\
\delta_{\nicefrac{(p-1)}{2}}\delta_{\nicefrac{(p-3)}{2}}(x-q)&x\equiv\tfrac{(p-1)q}{2}\\
\delta_{\nicefrac{(p-1)}{2}}\delta_{\nicefrac{(p-3)}{2}}(x+q)&x\equiv\tfrac{(p+1)q}{2}\\
\delta_{\nicefrac{(p-1)}{2}}\delta_{\nicefrac{(p-3)}{2}}x&\text{otherwise}\end{array}\right\}&&=(\delta_{\nicefrac{(p-1)}{2}}\delta_{\nicefrac{(p-3)}{2}})^2x.
\end{align*}

If $X$ is a subset of $\mathbb{Z}\backslash\{0\}$ that is bounded above, and its complement in $\mathbb{Z}$ is bounded below, then it is easy to see that the same is true for $aX:=\{ax|x\in X\}$, for any $a\in\mathfrak{W}_p$. Moreover, when $x\in X\Leftrightarrow-x\not\in X$, for all $x\in\mathbb{Z}\backslash\{0\}$, then the set $aX$ also satisfies this rule. 
Hence, this action can be extended to an action on bar partitions $\lambda$ by defining $\delta_i\lambda$ to be the bar partition with $\mathcal{A}(\delta_i\lambda)=\delta_i\mathcal{A}(\lambda)$.
\end{proof}

\begin{exam}
In our first example we illustrated the bead configuration 
$$\mathcal{A}((9,8,7,5,3))=\{9,8,7,5,3,-1,-2,-4,-6,-10,-11,-12,\dots\}.$$ 
We obtain the bead configuration of the bar partition $(13,6,5,2)=(\delta_0\delta_2)(9,8,7,5,3)$ by first subtracting $q:=3$ from each element in $\mathcal{A}((9,8,7,5,3))$ that is congruent to 4 (mod 5), and adding 3 to each element congruent to 1 (mod 5), then subtracting 6 from each element congruent to 3 (mod 5), and adding 6 to each element congruent to 2 (mod 5). We thus obtain the set 
$$\{13,6,5,2,-1,-3,-4,-7,-8,-9,-10,-11,-12,-14,-15,\dots\}=\mathcal{A}((13,6,5,2))$$ 
illustrated on the abacus below: 
\begin{center}\abacus(vvvvv,bbbbb,bbbbb,bbbnn,nbnbn,bbonn,bnbnb,bbnnn,nnnnn,nnnnn,vvvvv)\:\raisebox{2cm}{$\overset{\delta_2}{\longrightarrow}$}\:\abacus(vvvvv,bbbbb,bbbbb,bbbbn,nnnbn,bbonn,bnbbb,bnnnn,nnnnn,nnnnn,vvvvv)\:\raisebox{2cm}{$\overset{\delta_0}{\longrightarrow}$}\:\abacus(vvvvv,bbbbb,bbbbn,bbbbb,bnnbb,nbonb,nnbbn,nnnnn,bnnnn,nnnnn,vvvvv)\end{center}
Notice that both bar partitions have the same $q$-bar-core; this will always be the case, as the level $q$ action defined on the generators of $\mathfrak{W}_p$ always corresponds to adding or removing $q$-bars. On the 3-runner abacus, the action of $\delta_0\delta_2\in\mathfrak{W}_5$ has the following effect on the bead configuration of $(9,8,7,5,3)$. 
\begin{center}\abacus(vvv,bbb,bbb,bnn,nbn,bnb,bon,nbn,bnb,bbn,nnn,nnn,vvv)\:\raisebox{2.3cm}{$\overset{\delta_0\delta_2}{\longrightarrow}$}\:\abacus(vvv,bbb,nbb,bbb,bnn,bbn,bon,bnn,bbn,nnn,nnb,nnn,vvv)\end{center}
\end{exam}

We now give some invariants of the level $q$ action of $\mathfrak{W}_p$ which we will later use to give an explicit criterion for when two bar partitions lie in the same orbit under the level $q$ action, which we refer to as a \textbf{level} $q$ \textbf{orbit}.

\begin{lem}\label{lem4} 
Suppose $\lambda\in\mathcal{P}_2$ and $a\in\mathfrak{W}_p$, and define $a\lambda$ using the level $q$ action. 
\begin{enumerate}
\item $\overline{(a\lambda)}_q=\overline{\lambda}_q$; 
\item $\mathcal{Q}_p(a\lambda)$ is the same as $\mathcal{Q}_p(\lambda)$ with the components reordered; 
\item $\overline{\text{wt}}_p(a\lambda)=\overline{\text{wt}}_p(\lambda)$; 
\item $\overline{(a\lambda)}_p=a(\overline{\lambda}_p)$.
\end{enumerate}
\end{lem}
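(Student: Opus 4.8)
The plan is to prove all four parts by reducing everything to the action of a single generator $\delta_i$, since an arbitrary $a\in\mathfrak{W}_p$ is a product of generators and each claimed property is clearly preserved under composition (for part (2), ``reordering'' is closed under composition; for parts (1), (3), (4) the claims are plain equalities). So fix a generator $\delta_i$ and a bar partition $\lambda$, and analyse the effect of $\delta_i$ on $\mathcal{A}(\lambda)$ in terms of the $p$-runner abacus.

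First I would establish part (2), which is the structural heart of the lemma. The key observation is that the level $q$ action only permutes elements \emph{within} congruence classes modulo $p$: applying $\delta_i$ adds or subtracts a multiple of $q$ from each element of $\mathcal{A}(\lambda)$ lying in certain residue classes mod $p$, sending the class $jq$ to the class $jq$ (the value changes by a multiple of $q$, hence stays in its residue class mod $p$ since $\gcd(p,q)=1$ means the residue mod $p$ is unchanged only if... wait). More carefully: $\delta_i$ sends a bead at position $x$ with $x\equiv(i+1)q$ to $x-q\equiv iq$, and a bead at $x\equiv iq$ to $x+q\equiv(i+1)q$; so $\delta_i$ \emph{swaps} the two runners $\overline{iq}$ and $\overline{(i+1)q}$ (mod $p$), and simultaneously swaps runners $\overline{-iq}$ and $\overline{-(i+1)q}$, shifting beads by $\pm q$ so that the pattern of beads/gaps on runner $\overline{iq}$ becomes the pattern on runner $\overline{(i+1)q}$ and vice versa. (For $\delta_0$ the two runners $\overline{q}$ and $\overline{-q}$ are swapped, with a shift of $2q$; since $\lambda^{(-q)}=(\lambda^{(q)})'$ this is consistent.) Since the component $\lambda^{(j\bmod p)}$ of the $p$-quotient records exactly the gap-pattern on runner $j$, swapping runners permutes the components of $\mathcal{Q}_p(\lambda)$; I would verify that the shift by $\pm q$ is precisely the right amount to align the runners so that the \emph{partitions} (not just the shifted bead sets) are exchanged, using the definition of the $p$-runner abacus positions $x+1$ being directly right of $x$. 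This establishes (2), and part (3) follows immediately by the Remark after Lemma~\ref{lem3}: $\overline{\text{wt}}_p(\lambda)$ is a symmetric function of the components $\lambda^{(0)},\dots,\lambda^{((p-1)/2)}$ of the $p$-quotient (the $0$-component via its $1$-bar-weight, i.e.\ its size, and the rest via their sizes), so reordering them leaves it unchanged.

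For part (1) I would argue that applying $\delta_i$ to $\lambda$ corresponds to successively adding and removing $q$-bars: indeed $\delta_0x=x\mp 2q$ on the relevant classes is exactly the bead-move $x\mapsto x-q$ followed by $p-x\mapsto\dots$ — more precisely, I would observe directly from the definition of removing a $q$-bar (replace $x\in\mathcal{A}(\lambda)$ by $x-q$ when $x-q\notin\mathcal{A}(\lambda)$, then replace $q-x$ by $-x$; or remove parts $x,q-x$) that each $\delta_i$ can be realised as a sequence of $q$-bar additions and removals. Since the $q$-bar-core is well-defined independently of the order of bar removals (as noted in \S2), and adding a $q$-bar is the inverse of removing one, we get $\overline{(\delta_i\lambda)}_q=\overline{\lambda}_q$. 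Concretely, $\delta_i$ shifts whole runner-patterns by $\pm q$ on the $p$-abacus, which on the $pq$-... — actually the cleanest route is: on the $q$-runner abacus, $\delta_i\lambda$ is obtained from $\lambda$ by moving beads along runners by multiples of $p$... no. The honest statement is that the level $q$ action on $\mathbb{Z}$ moves each integer by a multiple of $q$, so $\mathcal{A}(\delta_i\lambda)$ and $\mathcal{A}(\lambda)$ determine the same gap-structure on each $q$-runner after the dust settles; I would make this precise by checking that $x\in\mathcal{A}(\delta_i\lambda)$ and $x-q\notin\mathcal{A}(\delta_i\lambda)$ happens for the ``same number of $x$ in each residue class mod $q$'' — equivalently that the $q$-quotient (or at least the $q$-bar-core data) is invariant. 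I expect this to require a short but slightly fiddly case-check across the residue classes $\pm iq,\pm(i+1)q$ mod $p$.

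Finally, part (4): since $\overline{\lambda}_p$ is the $p$-bar-core of $\lambda$, it is characterised by $\mathcal{A}(\overline{\lambda}_p)$ being the ``top'' bead configuration on the $p$-runner abacus with the same number of beads per runner as $\lambda$ — or, via the $p$-set, by $\Delta_{j\bmod p}\lambda$ for each $j$. The generator $\delta_i$ swaps runners $j$ and $j'$ (for the relevant pair, as in part (2)); doing this to the top configuration of $\lambda$ gives the top configuration of $\delta_i\lambda$, because $\delta_i$ commutes with ``push all beads up as far as possible''. Formally: $a(\overline{\lambda}_p)$ is a $p$-bar-core by part... hmm, $\delta_i$ of a $p$-bar-core need not obviously be a $p$-bar-core — but it is, because by part (2) its $p$-quotient is a permutation of that of $\overline{\lambda}_p$, which is all-empty, so $\delta_i(\overline{\lambda}_p)$ has empty $p$-quotient, hence (by Olsson's uniqueness, or Lemma~\ref{lem3}(1) with $c=1$... or just: empty $p$-quotient $\Rightarrow$ $p$-bar-core) is a $p$-bar-core. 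Then both $\overline{(\delta_i\lambda)}_p$ and $\delta_i(\overline{\lambda}_p)$ are $p$-bar-cores; and they have the same $p$-bar-core (trivially) and, more to the point, I claim $\delta_i\lambda$ and $\delta_i(\overline{\lambda}_p)$ have the same $p$-bar-core — this follows since $\delta_i\lambda$ is obtained from $\delta_i(\overline{\lambda}_p)$ by the $\delta_i$-image of the bar-removal moves taking $\lambda$ to $\overline{\lambda}_p$, and $\delta_i$ conjugates ``move a bead up its runner by $p$'' to ``move a bead up its (possibly different) runner by $p$''. Since a $p$-bar-core equals its own $p$-bar-core, $\overline{(\delta_i\lambda)}_p = \overline{(\delta_i(\overline{\lambda}_p))}_p = \delta_i(\overline{\lambda}_p)$, using that the latter is already a $p$-bar-core. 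Composing over the generators in $a$ gives $\overline{(a\lambda)}_p=a(\overline{\lambda}_p)$.

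The main obstacle I anticipate is part (1): verifying that $\delta_i$ preserves the $q$-bar-core requires tracking, across the four (or two, for $\delta_0$) affected residue classes mod $p$, that the induced rearrangement of beads on the $q$-runner abacus amounts to a net sequence of $q$-bar additions/removals — the bookkeeping with the simultaneous $\pm q$ shifts on the ``positive'' and ``negative'' runner pairs, and the interaction with position $0$, is where I would be most careful. Parts (2), (3), (4) are then comparatively formal once the runner-swap picture for $\delta_i$ is nailed down.
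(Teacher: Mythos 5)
Your proposal is correct and follows essentially the same route as the paper: reduce to a single generator $\delta_i$, observe that the level $q$ action swaps the runners $\overline{iq},\overline{(i+1)q}$ (and simultaneously $\overline{-iq},\overline{-(i+1)q}$) of the $p$-runner abacus with a shift of $\pm q$ (respectively swaps $\overline{q},\overline{-q}$ with a shift of $2q$ for $\delta_0$), deduce (2) as a permutation of the $p$-quotient, get (3) from Lemma \ref{lem3}(1) with $c=1$ since the reordering fixes the $0$-component, and prove (1) by realising $\delta_i$ as a finite sequence of $q$-bar (or $2q$-bar) additions and removals, which is exactly the paper's bookkeeping. The only genuine divergence is part (4): the paper verifies directly that the $p$-set of $\delta_i\lambda$ is the $\delta_i$-image of the $p$-set of $\lambda$, whereas you argue that $\delta_i$ preserves $p$-bar-cores (empty $p$-quotient, by your part (2)) and conjugates $p$-bar removals to $p$-bar removals (using $\delta_i(x-p)=\delta_i(x)-p$ and $\delta_i(-x)=-\delta_i(x)$), so that $\overline{(\delta_i\lambda)}_p=\overline{(\delta_i\overline{\lambda}_p)}_p=\delta_i(\overline{\lambda}_p)$; this is valid and arguably cleaner than the paper's computation. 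One small caution for (1): the mere fact that $\delta_i$ moves each integer by a multiple of $q$ is not by itself sufficient — what you need, and what your pairing of the positive and negative runner pairs supplies, is that the moves group into bona fide $q$-bar additions/removals (equivalently, that the bead count on each $q$-runner, hence the $q$-set, is unchanged), so be sure to carry out that pairing rather than rely on the loose ``same gap-structure'' phrasing.
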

\begin{proof}
The relations occurring in all four parts are transitive, so we need only prove them in the case where $a$ is simply a generator $\delta_i$ of $\mathfrak{W}_p$. 

\vspace{3mm}
\noindent
(1) An element $x\in\mathcal{A}(\lambda)$ is fixed by the level $q$ action of $\delta_0$ if and only if 
$$x\not\equiv q,-q\:(\text{mod }p);\:x\equiv q\:(\text{mod }p),\:x-2q\in\mathcal{A}(\lambda);\text{ or }x\equiv-q\:(\text{mod }p),\:x+2q\in\mathcal{A}(\lambda).$$ 
If $x\in\mathcal{A}(\lambda)$, $x\equiv q$ (mod $p$) and $x-2q\not\in\mathcal{A}(\lambda)$, then $\delta_0$ sends $x$ to $x-2q$, and sends $2q-x\in\mathcal{A}(\lambda)$ to $-x$. If $x\in\mathcal{A}(\lambda)$, $x\equiv-q$ (mod $p$) and $x+2q\not\in\mathcal{A}(\lambda)$, then $\delta_0$ sends $x$ to $x+2q$, and sends $-x-2q\in\mathcal{A}(\lambda)$ to $-x$. Thus the action of $\delta_0$ on $\mathcal{A}(\lambda)$ corresponds to removing $2q$-bars from or adding $2q$-bars to $\lambda$. 

For $i\in\{1,\dots,\nicefrac{(p-1)}{2}\}$, the level $q$ action of $\delta_i$ fixes $x\in\mathcal{A}(\lambda)$ if and only if 
\begin{align*}x\not\equiv(i+1)q,-iq,iq,-(i+1)q\:(\text{mod }p);\:x\equiv(i+1)q,-iq\:(\text{mod }p),\:x-q\in\mathcal{A}(\lambda);\\
\text{or }x\equiv iq,-(i+1)q\:(\text{mod }p),\:x+q\in\mathcal{A}(\lambda).\end{align*} 
If $x\in\mathcal{A}(\lambda)$, $x\equiv(i+1)q$ or $-iq$ (mod $m$) and $x-q\not\in\mathcal{A}(\lambda)$, then $\delta_i$ sends $x$ to $x-q$, and sends $q-x\in\mathcal{A}(\lambda)$ to $-x$. If $x\in\mathcal{A}(\lambda)$, $x\equiv iq$ or $-(i+1)q$ (mod $p$) and $x+q\not\in\mathcal{A}(\lambda)$, then $\delta_i$ sends $x$ to $x+q$, and sends $-x-q\in\mathcal{A}(\lambda)$ to $-x$. Thus the action of $\delta_i$ on $\mathcal{A}(\lambda)$ corresponds to removing $q$-bars from or adding $q$-bars to $\lambda$. 

Moreover, since there are only finitely many $x\in\mathcal{A}(\lambda)$ such that at least one of $x-2q$, $x+2q$, $x-q$ or $x+q$ is not in $\mathcal{A}(\lambda)$, we obtain $\mathcal{A}(\delta_i\lambda):=\delta_i\mathcal{A}(\lambda)$ from $\mathcal{A}(\lambda)$ via a finite number of changes. Hence $\overline{(\delta_i\lambda)}_q=\overline{\lambda}_q$. 

\vspace{3mm}
\noindent
(2) When $x\not\equiv\pm q$ (mod $p$), we have $x\in\mathcal{A}(\delta_0\lambda)\Leftrightarrow x\in\mathcal{A}(\lambda)$, so $(\delta_0\lambda)^{(j)}=\lambda^{(j)}$ for all $j\not\equiv\pm q$ (mod $p$). If instead $x\equiv q$ (mod $p$), then $x\in\mathcal{A}(\delta_0\lambda)\Leftrightarrow x+2q\in\mathcal{A}(\lambda)$, or if $x\equiv-q$ (mod $p$), then $x\in\mathcal{A}(\delta_0\lambda)\Leftrightarrow x-2q\in\mathcal{A}(\lambda)$; hence $(\delta_0\lambda)^{(q)}=\lambda^{(-q)}=(\lambda^{(q)})'$ and $(\delta_0\lambda)^{(-q)}=\lambda^{(q)}=(\lambda^{(-q)})'$. 

If $i\in\{1,\dots,\nicefrac{(p-1)}{2}\}$ and $x\not\equiv(i+1)q,-iq,iq,-(i+1)q$ (mod $p$), then $x\in\mathcal{A}(\delta_i\lambda)\Leftrightarrow x\in\mathcal{A}(\lambda)$, so $(\delta_i\lambda)^{(j)}=\lambda^{(j)}$ for all $j\not\equiv(i+1)q,-iq,iq,-(i+1)q$ (mod $p$). If instead $x\equiv(i+1)q$ or $-iq$ (mod $p$), then $x\in\mathcal{A}(\delta_i\lambda)\Leftrightarrow x+q\in\mathcal{A}(\lambda)$, and if $x\equiv iq$ or $-(i+1)q$ (mod $p$), then $x\in\mathcal{A}(\delta_i\lambda)\Leftrightarrow x-q\in\mathcal{A}(\lambda)$; hence if $j\equiv(i+1)q$ or $-iq$ (mod $p$), then $(\delta_i\lambda)^{(j)}=\lambda^{(j-q)}$ and $(\delta_i\lambda)^{(-j)}=\lambda^{(j+q)}$. 

\vspace{3mm}
\noindent
(3) This follows from (2) and Lemma \ref{lem3}(1) (taking $c=1$): the bead configuration for $\delta_i\lambda$ on the $p$-runner abacus is the same as that of $\lambda$ but with the runners reordered, so the $p$-bar-weights of the two bar partitions are equal. 

\vspace{3mm}
\noindent
(4) We need to show that $\Delta_{j\text{ mod }p}(\delta_i\lambda)=\delta_i(\Delta_{k\text{ mod }p}\lambda)$, for each $i\in\{0,\dots,\nicefrac{(p-1)}{2}\}$ and $j\equiv\delta_ik$ (mod $p$). We suppose the contrary. \newline
When $i=0$, we may assume that $j\equiv\pm q\equiv-k$ (mod $p$), as otherwise $j=k$ and 
$$\Delta_j(\delta_0\lambda)=\Delta_j\lambda=\Delta_k\lambda=\delta_0(\Delta_k\lambda).$$ 
But $$j\equiv\pm q\equiv-k\Rightarrow\Delta_j(\delta_0\lambda)=\Delta_k\lambda\pm2q=\delta_0(\Delta_k\lambda),$$ 
so we must have $i>0$, and we may also assume that $j\equiv\pm iq$ or $\pm(i+1)q$, and $j\equiv k\pm q$ (mod $p$). But then 
$$\Delta_j(\delta_i\lambda)=\Delta_{\delta_i(j)}\lambda=\delta_i(\Delta_j\lambda),$$ 
so in fact, the $p$-set of $\delta_i\lambda$ is equal to the image of the $p$-set of $\lambda$ under the action of $\delta_i$. Hence $\overline{(\delta_i\lambda)}_p=\delta_i(\overline{\lambda}_p)$, for all $i\in\{0,\dots,\nicefrac{(p-1)}{2}\}$. 
\end{proof}

Next we will give a criterion for when two bar partitions lie in the same level $q$ orbit; to this end, we will first establish a condition for two $p$-bar-cores to lie in the same level $q$ orbit.

\begin{prop}\label{prop7} 
Suppose $\lambda,\mu\in\overline{C}_p$, and that the multisets 
$$[\Delta_{i\text{ mod }p}\lambda\:(\text{mod }q)|i\equiv0,\dots,p-1],\:[\Delta_{i\text{ mod }p}\mu\:(\text{mod }q)|i\equiv0,\dots,p-1]$$
are equal. Then $\overline{\lambda}_q=\overline{\mu}_q$, and $\lambda$ and $\mu$ lie in the same level $q$ orbit \textup{(}of $p$-bar-cores\textup{)}. 
\end{prop}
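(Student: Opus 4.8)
The plan is to work entirely on the level of $p$-sets, using the fact (established in the proof of Lemma \ref{lem4}(4)) that the level $q$ action of $\mathfrak{W}_p$ on a $p$-bar-core is realised by applying the $\delta_i$ to the $p$ entries $\Delta_{i\bmod p}\lambda$. Since every element of a $p$-set is nonzero except $\Delta_{0\bmod p}\lambda = 0$ (because a $p$-bar-core is determined by, and recoverable from, which positions $x\not\equiv 0$ are occupied, together with the relation $\Delta_{k}\lambda + \Delta_{-k}\lambda = p$), it suffices to understand how $\mathfrak{W}_p$ permutes and shifts these entries. First I would record the key structural fact: for a $p$-bar-core $\lambda$, the $p$-set $\{\Delta_{i\bmod p}\lambda\}$ is a complete set of residues mod $p$, symmetric under $x\mapsto p-x$ about the fixed point $0$; conversely any such set arises from a unique $p$-bar-core. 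So a $p$-bar-core is the same data as an unordered "signed" assignment of one representative to each nonzero residue class mod $p$, subject to the pairing constraint.

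Next I would translate the generators into this language. A short check (already essentially done in Lemma \ref{lem4}(2),(4)) shows that $\delta_i$ for $1\le i\le (p-3)/2$ swaps the two entries lying in the classes $\equiv iq$ and $\equiv(i+1)q$ (and simultaneously swaps their mirror images in $\equiv -iq$, $\equiv-(i+1)q$), while shifting each by $\pm q$ so the pairing relation is preserved; $\delta_{(p-1)/2}$ does the analogous swap between the classes $\equiv \frac{p-1}{2}q$ and $\equiv\frac{p+1}{2}q = -\frac{p-1}{2}q$, which is a "sign change" on the single entry in the class $\pm\frac{p-1}{2}q$; and $\delta_0$ is the sign change on the entry in the class $\pm q$. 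Thus, reading the nonzero entries of the $p$-set in the order $q, 2q, \dots, \frac{p-1}{2}q$ (mod $p$) — a legitimate indexing since $\gcd(p,q)=1$ — and reducing them mod $q$, the group $\mathfrak{W}_p$ acts exactly as the signed permutation group (Coxeter group of type $C_{(p-1)/2}$, with the affine generator $\delta_0$ supplying the extra shift-by-$2q$ which is invisible mod $q$), permuting and negating the $\frac{p-1}{2}$ values $\Delta_{kq\bmod p}\lambda \bmod q$. Since $\Delta_{-k}\lambda = p - \Delta_k\lambda \equiv -\Delta_k\lambda$ modulo $q$ (using $p\equiv 0$? no — here one must be careful: $p\not\equiv 0\bmod q$) — so I would instead phrase invariance in terms of the multiset $[\Delta_i\lambda \bmod q]$ over all $p$ classes, which is what the proposition hypothesises, and note this multiset is literally unchanged by each $\delta_i$ because $\delta_i$ only shifts entries by multiples of $q$.

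With that observation the proof almost writes itself: the hypothesis says $\lambda$ and $\mu$ have the same multiset of residues mod $q$ among their $p$-set entries. I claim the orbit of $\lambda$ under $\mathfrak{W}_p$ consists of all $p$-bar-cores with that same multiset. One inclusion is Lemma \ref{lem4} together with the invariance just noted. For the reverse, given $\mu$ with the same multiset, I would exhibit an explicit word in the $\delta_i$ carrying $\lambda$ to $\mu$: use the type-$C$ generators $\delta_1,\dots,\delta_{(p-1)/2}$ to sort the entries $\Delta_{kq\bmod p}\lambda$ so that their residues mod $q$ match those of $\mu$ in the corresponding positions — possible precisely because the multisets agree and signed permutations act transitively on tuples with a fixed multiset-of-absolute-values-up-to-sign — and then use $\delta_0$ (and the braid/affine structure) to correct the actual integer values within each class, since two $p$-bar-cores whose $p$-sets agree class-by-class modulo $q$ differ, in each class, by a multiple of $q$, and $\delta_0$-conjugates generate all such shifts consistent with the pairing constraint. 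Finally $\overline{\lambda}_q = \overline{\mu}_q$ follows from Lemma \ref{lem4}(1), since both equal $\overline{(a\lambda)}_q$ for the connecting element $a$.

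The main obstacle is the reverse direction — showing the multiset condition is not just necessary but sufficient for lying in one orbit. The subtlety is bookkeeping the interaction between the $C$-type "sorting" moves and the affine move $\delta_0$ (and the fact that shifting one entry by $q$ forces shifting its mirror partner oppositely), i.e. checking that the shift-by-multiples-of-$q$ freedom available within each residue class is exactly enough, no more and no less, to reach any target $p$-bar-core with the prescribed residues. I would handle this by an induction on $\sum_k |\Delta_{kq\bmod p}\lambda - \Delta_{kq\bmod p}\mu|/q$ (after the sorting step has aligned residues), at each stage applying a suitable conjugate of $\delta_0$ to decrease this quantity; the worked abacus examples in the paper should make the pattern of these moves transparent.
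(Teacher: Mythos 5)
Your strategy is sound and, modulo two repairable imprecisions noted below, would yield a complete proof --- but it is a genuinely different route from the paper's. The paper's proof is essentially a citation argument: it quotes Fayers' Proposition 4.1 to deduce $\overline{\lambda}_q=\overline{\mu}_q$ directly from the multiset hypothesis, and then quotes a second result of Fayers (the element of a level $q$ orbit minimising $\sum_i(\Delta_{i\text{ mod }p}\nu-\nicefrac{p}{2})^2$ is a $q$-bar-core) to see that each level $q$ orbit of $p$-bar-cores contains a $q$-bar-core; since the action preserves $q$-bar-cores, that element must be the common $q$-bar-core $\overline{\lambda}_q=\overline{\mu}_q$, so the two orbits meet and hence coincide. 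You instead compute the orbits from scratch: encoding a $p$-bar-core by the tuple $(\Delta_{kq\text{ mod }p}\lambda)$ for $k=1,\dots,\nicefrac{(p-1)}{2}$, identifying the action of $\delta_1,\dots,\delta_{\nicefrac{(p-1)}{2}}$ with the finite hyperoctahedral group and $\delta_0$ with the affine reflection, and proving that equal multisets imply equal orbits by sorting followed by an induction on displacement, with $\overline{\lambda}_q=\overline{\mu}_q$ extracted at the end from Lemma \ref{lem4}(1) --- the reverse of the paper's logical order. What your approach buys is self-containedness (neither of Fayers' theorems is needed) and, as a by-product, the full classification of level $q$ orbits of $p$-bar-cores, which the paper only reaches in Proposition \ref{prop8}; what it costs is length.

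Two points to tighten. First, to sort the entries so that residues modulo $q$ match position by position, you need to know that the multiset $[\Delta_{i\text{ mod }p}\lambda\ (\text{mod }q)]$ determines the multiset of unordered pairs $\{\Delta_{kq\text{ mod }p}\lambda,\;p-\Delta_{kq\text{ mod }p}\lambda\}$ reduced modulo $q$; this holds because the sets $\{c,(p-c)\bmod q\}$ for $c\in\mathbb{Z}/q\mathbb{Z}$ are pairwise equal or disjoint, hence partition $\mathbb{Z}/q\mathbb{Z}$, so the multiset union determines the multiplicity of each block. State this explicitly: it is where the pairing $\Delta_{k}\lambda+\Delta_{-k}\lambda=p$ is actually used. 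Second, in the final induction a single conjugate of $\delta_0$ will not do: every conjugate of $\delta_0$ is an involution acting as a reflection on one entry, so it changes that entry's residue class modulo $q$ (from $c$ to $(p-c)\bmod q$) rather than translating it, and would undo your sorting. The move you want is the product of a conjugate of $\delta_0$ with a conjugate of $\delta_{\nicefrac{(p-1)}{2}}$ acting on the same entry; the composite translates $\Delta_{kq\text{ mod }p}\lambda$ by $\pm pq$ (and its mirror partner by $\mp pq$), which is exactly the step your induction requires, since after sorting the corresponding entries of $\lambda$ and $\mu$ agree both modulo $p$ and modulo $q$ and therefore differ by multiples of $pq$.
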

\begin{proof}
The fact that $\lambda$ and $\mu$ have the same $q$-bar-core is established by Fayers' \cite[Proposition 4.1]{F3}, and it follows from the definition of the level $q$ action of $\mathfrak{W}_p$ on the set of $p$-bar-cores that this action preserves the $q$-bar-core of a bar partition as by definition $\delta_i$ does not change the multiset of residues modulo $q$ of the elements of the $p$-set. Therefore each orbit of the level $q$ action on $\overline{C}_p$ can contain at most one $q$-bar-core. 

In the same paper \cite{F3}, Fayers proves the following result: 
\begin{center}
Suppose $\mathcal{O}$ is a level $q$ orbit. Let $\nu$ be an element of $\mathcal{O}$ for which the sum $\sum^{p-1}_{i=0}(\Delta_{i\text{ mod }p}\nu-\nicefrac{p}{2})^2$ is minimised. Then $\nu$ is a $q$-bar-core. 
\end{center}
This $\nu$ is uniquely defined as each level $q$ orbit contains no more than one $q$-bar-core. Thus letting $\nu$ be the $q$-bar-core of both $\lambda$ and $\mu$, it must be contained in both the level $q$ orbit containing $\lambda$ and the level $q$ orbit containing $\mu$; so these orbits coincide. 
\end{proof}

For the more general result, we define the $q$\textbf{-weighted} $p$\textbf{-quotient} of $\lambda\in\mathcal{P}_2$ with $p$-set $\{\Delta_0\lambda,\dots,\Delta_{p-1}\lambda\}$ and $p$-quotient $\mathcal{Q}_p(\lambda)=(\lambda^{(0)},\dots,\lambda^{(p-1)})$ to be the multiset 
$$\mathcal{Q}^q_p(\lambda):=[(\Delta_i\lambda\:(\text{mod }q),\lambda^{(i)})|i\equiv0,1,\dots,p-1].$$ 

\begin{exam}
We have already seen that the bar partition $(9,8,7,5,3)$ has 5-set $\{0,-4,-3,8,9\}$ and $5$-quotient $((1),(1),(3),(1^3),(1))$, so $\mathcal{Q}^3_5((9,8,7,5,3))$ is the multiset 
$$[(0\:(\text{mod }3),(1)),(2\:(\text{mod }3),(1)),(0\:(\text{mod }3),(3)),(2\:(\text{mod }3),(1^3)),(0\:(\text{mod }3),(1))].$$
\end{exam}

\begin{prop}\label{prop8} 
Suppose $\lambda,\mu\in\mathcal{P}_2$. Then $\lambda$ and $\mu$ lie in the same level $q$ orbit of $\mathfrak{W}_p$ if and only if they have the same $q$-weighted $p$-quotient. 
\end{prop}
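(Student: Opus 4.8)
The plan is to establish necessity by a direct computation from Lemma~\ref{lem4}, and then to reduce sufficiency, via Proposition~\ref{prop7}, to a statement about the stabiliser of a single $p$-bar-core.

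\emph{Necessity.} Since ``lying in the same level $q$ orbit'' is transitive, I would take $a$ to be a generator $\delta_i$. Each $\delta_i$ acts on $\mathbb{Z}$ by translations through multiples of $q$ while permuting the residue classes modulo $p$; from the formulas in the proofs of parts~(2) and~(4) of Lemma~\ref{lem4} there is a permutation $\sigma_i$ of $\{0,1,\dots,p-1\}$, depending only on $p$, $q$, $i$, with $\Delta_{\sigma_i(k)}(\delta_i\lambda)\equiv\Delta_k\lambda\pmod q$ and $(\delta_i\lambda)^{(\sigma_i(k))}=\lambda^{(k)}$ for every $k$. Hence the entry of $\mathcal{Q}^q_p(\delta_i\lambda)$ indexed by $\sigma_i(k)$ is the entry of $\mathcal{Q}^q_p(\lambda)$ indexed by $k$, and since $\sigma_i$ is a bijection the two multisets agree. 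The same bookkeeping attaches to each $a\in\mathfrak{W}_p$ a permutation $\sigma_a$ of the runners --- fixing $0$ and commuting with negation --- by which $\mathcal{Q}^q_p$ is reindexed, while $\overline{(a\lambda)}_p=a(\overline\lambda_p)$ by Lemma~\ref{lem4}(4).

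\emph{Sufficiency.} Suppose $\lambda$ and $\mu$ have the same $q$-weighted $p$-quotient. Projecting onto first coordinates, the multisets of residues modulo $q$ of the $p$-sets of $\overline\lambda_p$ and $\overline\mu_p$ coincide, so Proposition~\ref{prop7} supplies $a\in\mathfrak{W}_p$ with $a(\overline\lambda_p)=\overline\mu_p$; then $\overline{(a\lambda)}_p=\overline\mu_p$ and (by necessity) $\mathcal{Q}^q_p(a\lambda)=\mathcal{Q}^q_p(\mu)$. Replacing $\lambda$ by $a\lambda$, and then moving $\lambda$ and $\mu$ together by the element carrying their common $p$-bar-core to the $q$-bar-core in its level $q$ orbit, I may assume $\overline\lambda_p=\overline\mu_p=:\kappa$, which is at once a $p$-bar-core and a $q$-bar-core. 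Now $\Delta_i\lambda=\Delta_i\kappa=\Delta_i\mu$ for all $i$; comparing the two equal $q$-weighted $p$-quotients block-by-block over the conjugate pairs $\{i,-i\}$ of runners and over the runner $0$ --- using $\Delta_i\kappa+\Delta_{-i}\kappa=p$ and $\lambda^{(-i)}=(\lambda^{(i)})'$ to keep the matching compatible with negation --- yields a permutation $\tau$ of $\{0,\dots,p-1\}$ fixing $0$, commuting with negation, with $\Delta_{\tau(i)}\kappa\equiv\Delta_i\kappa\pmod q$ and $\mu^{(i)}=\lambda^{(\tau(i))}$ for every $i$. Since a bar partition is determined by its $p$-bar-core and its $p$-quotient \cite{O1}, it then suffices to produce $b\in\mathfrak{W}_p$ with $b\kappa=\kappa$ and $\sigma_b=\tau^{-1}$; for such $b$ we get $\overline{(b\lambda)}_p=\kappa=\overline\mu_p$ and $\mathcal{Q}_p(b\lambda)=\mathcal{Q}_p(\mu)$, whence $b\lambda=\mu$.

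The hard part will be this last step: showing $b\mapsto\sigma_b$ maps the stabiliser of $\kappa$ in $\mathfrak{W}_p$ onto the subgroup $G_\kappa$ of permutations of the runners that fix $0$, commute with negation, and satisfy $\Delta_{\sigma(i)}\kappa\equiv\Delta_i\kappa\pmod q$ for every $i$. Since any $b\in\mathfrak{W}_p$ translates each residue class modulo $p$ uniformly by a multiple of $q$, I would first check that $b$ stabilises $\kappa$ exactly when the translation it applies to class $i$ equals $\Delta_{\sigma_b(i)}\kappa-\Delta_i\kappa$ for each $i$ (which in particular forces $\sigma_b\in G_\kappa$), so that the issue is the converse. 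As $b$ ranges over $\mathfrak{W}_p$ the $\sigma_b$ already fill the full hyperoctahedral group on the $(p-1)/2$ conjugate pairs of runners (the $\delta_i$ with $1\le i\le(p-3)/2$ giving the adjacent transpositions of pairs, and $\delta_0$ a sign change), so it is enough to realise the generators of $G_\kappa$ --- a transposition of runners $i,j$ together with that of $-i,-j$ when $\Delta_i\kappa\equiv\Delta_j\kappa\pmod q$; its twist by a flip when $\Delta_i\kappa+\Delta_j\kappa\equiv p\pmod q$; a single flip $i\leftrightarrow-i$ when $2\Delta_i\kappa\equiv p\pmod q$ --- each by an explicit word in the generators that fixes $\kappa$. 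Producing these words and verifying they stabilise $\kappa$, using the constraints that being a core places on $\mathcal{A}(\kappa)$, is where essentially all the work lies; everything else is bookkeeping with Lemma~\ref{lem4} and Proposition~\ref{prop7}.
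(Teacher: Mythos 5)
Your ``necessity'' direction is fine and is essentially the paper's argument: each generator $\delta_i$ reindexes the runners while shifting the corresponding $\Delta$'s by multiples of $q$, so $\mathcal{Q}^q_p$ is unchanged as a multiset. Your reduction of sufficiency is also on the right track and parallels the paper: via Proposition~\ref{prop7} and Lemma~\ref{lem4}(4) you may move $\lambda$ and $\mu$ so that they share a $p$-bar-core $\kappa$ lying in $\overline{C}_p\cap\overline{C}_q$, after which it remains to realise a runner permutation $\tau$ (compatible with conjugation and with residues of the $\Delta$'s modulo $q$) by an element of $\mathfrak{W}_p$ stabilising $\kappa$.

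The genuine gap is that you stop exactly there: you state that producing words in the generators realising the generators of $G_\kappa$ and verifying that they stabilise $\kappa$ ``is where essentially all the work lies,'' but you do not do it, and it is not routine. Without further input it is not even clear that the translation vector $\bigl(\Delta_{\tau(i)}\kappa-\Delta_i\kappa\bigr)_i$ demanded by a given $\tau\in G_\kappa$ lies in the translation lattice that the level $q$ action of $\mathfrak{W}_p$ actually realises, so surjectivity of $b\mapsto\sigma_b$ from the stabiliser onto $G_\kappa$ is precisely the content of the proposition, not bookkeeping. The paper closes this by importing a structural fact from the proof of \cite[Proposition 4.1]{F3}: because $\kappa\in\overline{C}_p\cap\overline{C}_q$, the elements of its $p$-set congruent to a fixed $k$ modulo $q$ form an arithmetic progression with common difference $q$. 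Consequently two runners adjacent in such a progression have $\Delta$'s differing by exactly $q$, and a single generator $\delta_m$ swaps them (and their conjugates) while visibly fixing the $p$-set; composing these adjacent swaps realises an arbitrary permutation within each residue class, which is exactly the family of permutations you need. Your proposed case analysis (transpositions, twisted transpositions, flips, each requiring a bespoke word and a verification using ``the constraints that being a core places on $\mathcal{A}(\kappa)$'') is an attempt to rediscover this, but without the arithmetic-progression property you have no handle on those constraints, and the argument does not go through as written.
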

\begin{proof}
Firstly suppose that $\lambda$ and $\mu$ lie in the same level $q$ orbit; we may assume that $\mu=\delta_i\lambda$ for some $i\in\{0,\dots,\nicefrac{(p-1)}{2}\}$. Then we get $\mathcal{Q}^q_p(\lambda)=\mathcal{Q}^q_p(\mu)$ from the proof of Lemma \ref{lem4}: when $\mu=\delta_0\lambda$, we have 
$$(\Delta_j\mu,\mu^{(j)})=\begin{cases}(\Delta_{-j}\lambda+2q,\lambda^{(-j)})&\text{for }j\equiv q\text{ (mod }p),\\
(\Delta_{-j}\lambda-2q,\lambda^{(-j)})&\text{for }j\equiv-q\text{ (mod }p),\\
(\Delta_j\lambda,\lambda^{(j)})&\text{otherwise},\end{cases}$$ 
and when $\mu=\delta_i\lambda$ for some $i\in\{1,\dots,\nicefrac{(p-1)}{2}\}$, we have 
$$(\Delta_j\mu,\mu^{(j)})=\begin{cases}(\Delta_{j-q}\lambda+q,\lambda^{(j-q)})&\text{for }j\equiv(i+1)q\text{ or }-iq\text{ (mod }p),\\
(\Delta_{j+q}\lambda-q,\lambda^{(j+q)})&\text{for }j\equiv iq\text{ or }-(i+1)q\text{ (mod }p),\\
(\Delta_j\lambda,\lambda^{(j)})&\text{otherwise}.\end{cases}$$ 

For the other direction, suppose that $\lambda$ and $\mu$ share the $q$-weighted $p$-quotient $\mathcal{Q}^q_p(\lambda)=\mathcal{Q}^q_p(\mu)$. By the definition of the $p$-set, and since all components of the $p$-quotient of a $p$-bar-core are equal to the empty bar partition, the $p$-bar-cores of $\lambda$ and $\mu$ must have the same $q$-weighted $p$-quotient $\mathcal{Q}^q_p(\overline{\lambda}_p)=\mathcal{Q}^q_p(\overline{\mu}_p)$. Thus, by Proposition \ref{prop7} we may find $a,b\in\mathfrak{W}_p$ such that $a(\overline{\lambda}_p)=b(\overline{\mu}_p)=\sigma$, where $\sigma$ is the $q$-bar-core of both $\overline{\lambda}_p$ and $\overline{\mu}_p$. Then by Lemma \ref{lem4}(4) we have $\overline{(a\lambda)}_p=\overline{(b\mu)}_p=\sigma$, so using Lemma \ref{lem4}(1) we see that $\sigma$ is the $p$-bar-core and the $q$-bar-core of both $a\lambda$ and $b\mu$; in particular, $a\lambda$ and $b\mu$ have the same $p$-set. Moreover, by our assumption and the proof of the only `only if' part of the proposition above both $a\lambda$ and $b\mu$ have $q$-weighted $p$-quotient $\mathcal{Q}^q_p(\lambda)$. 

From the proof of \cite[Proposition 4.1]{F3} and the fact that $\sigma\in\overline{C}_p\cap\overline{C}_q$ it follows that for each $k\in\{0,\dots, q-1\}$, the elements $\Delta_j\sigma$ in the $p$-set (of $\sigma$, $a\lambda$ and $b\mu$) that are congruent to $k$ modulo $q$ form an arithmetic progression with common difference $q$. By the first paragraph of this proof we can therefore apply the level $q$ action to $a\lambda$ and arbitrarily reorder the elements $(a\lambda)^{(j)}\in\mathcal{Q}_p(a\lambda)$ such that $j\equiv k$ (mod $q$), for each $k$, without affecting the $q$-weighted $p$-quotient. 

Thus we can apply elements of $\mathfrak{W}_p$ to transform $a\lambda$ to $b\mu$, as $\mathcal{Q}^q_p(a\lambda)=\mathcal{Q}^q_p(b\mu)$ if and only if the multisets $[(a\lambda)^{(j)}|\Delta_j\sigma\equiv k\text{ (mod }q)]\text{ and }[(b\mu)^{(j)}|\Delta_j\sigma\equiv k\text{ (mod }q)]$ are equal for each $k\in\{0,\dots,q-1\}$, and it follows that $\lambda$ and $\mu$ lie in the same level $q$ orbit. 
\end{proof}

\section{Generalised bar-cores}
\noindent
Now that we have covered all of the necessary definitions and basic results relating to the action of $\mathfrak{W}_p$, we arrive at the first of our main results. The following proposition is a generalisation of a theorem by Olsson \cite[Theorem 4]{O2} which states that the $q$-bar-core of a $p$-bar-core is again a $p$-bar-core, or in the notation used above, 
$$\overline{\text{wt}}_p(\lambda)=0\Rightarrow\overline{\text{wt}}_p(\overline{\lambda}_q)=0.$$ 

\begin{prop}\label{prop1}
For all bar partitions $\lambda$, 
$$\overline{\text{wt}}_p(\overline{\lambda}_q)\leq\overline{\text{wt}}_p(\lambda).$$
\end{prop}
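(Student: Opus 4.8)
The plan is to reduce the statement to the ordinary (non-bar) weight inequality for James' abacus, using the $p$-quotient as the bridge. The key observation is that $q$ is coprime to $p$, so taking the $q$-bar-core of $\lambda$ can be analysed by first passing to a common refinement: consider the $pq$-runner abacus, or equivalently the interaction between the $p$-quotient and $q$-bars. Concretely, by Lemma~\ref{lem3} applied with the roles of $p$ and $q$ suitably arranged, the $p$-bar-weight of any bar partition $\nu$ can be read off from the $p$-quotient $\mathcal{Q}_p(\nu)$ via
$$\overline{\text{wt}}_p(\nu)=\overline{\text{wt}}_1(\nu^{(0\bmod p)})+\sum_{j=1}^{(p-1)/2}\text{wt}_1(\nu^{(j\bmod p)}),$$
but this is trivially zero, so instead the useful route is to compare $\overline{\text{wt}}_p(\lambda)$ and $\overline{\text{wt}}_p(\overline{\lambda}_q)$ by comparing their $p$-quotients directly. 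Since $\overline{\lambda}_q$ is obtained from $\lambda$ by removing $q$-bars, and $q\not\equiv 0\pmod p$, removing a $q$-bar from $\lambda$ either removes a rim $q$-hook from two conjugate components $\lambda^{(j\bmod p)},\lambda^{(-j\bmod p)}$ of the $p$-quotient (when the two parts removed are $\equiv\pm j$), or acts on the component $\lambda^{(0\bmod p)}$, or shifts a part within a single runner. The main point is that each such move does not increase the ordinary $p$-weight... — wait, that is not quite what we want either.

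Let me restate the intended line of attack. The cleanest approach is: apply Lemma~\ref{lem3}(2) to the integer $cp$ with $c=q$, which tells us that the components of $\mathcal{Q}_p(\overline{\lambda}_{pq})$ are exactly the $q$-cores (resp. $q$-bar-core) of the components of $\mathcal{Q}_p(\lambda)$. First I would establish the two-step factorisation $\overline{\lambda}_q=\overline{(\overline{\lambda}_{pq})}_q$ is false in general, so instead I use: $\overline{\text{wt}}_p(\overline{\lambda}_q)\le\overline{\text{wt}}_p(\lambda)$ is equivalent, via $\overline{\text{wt}}_{pq}(\mu)=\overline{\text{wt}}_q(\mu)+q\cdot(\text{something})$-type identities, to a statement purely about removing $q$-bars. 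The honest plan: write $\mu=\overline{\lambda}_q$. Then $\mathcal{A}(\mu)$ is obtained from $\mathcal{A}(\lambda)$ by a sequence of $q$-bar removals. I would show by induction on $\overline{\text{wt}}_q(\lambda)$ that a single $q$-bar removal does not increase $\overline{\text{wt}}_p$. A single $q$-bar removal replaces $x,q-x\in\mathcal{A}(\lambda)$ by $x-q,-x$ (case (ii)), or replaces $x$ by $x-q$ together with the paired move (case (i) in abacus terms). Working on the $p$-quotient: since $\gcd(p,q)=1$, the map $x\mapsto x-q$ permutes residues mod $p$, and on each runner it corresponds to a shift; the effect on $\text{wt}_p$-type counts is controlled by the fact that $\overline{\text{wt}}_p$ counts pairs $(y,a)$ with $y\in\mathcal A(\lambda)$, $y-ap\notin\mathcal A(\lambda)$, halved.

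The main obstacle, and where the real work lies, is showing that this local move on $\mathcal{A}(\lambda)$ cannot create new "defects" (pairs $(y,a)$ with $y\in\mathcal A$, $y-ap\notin\mathcal A$) in net quantity. Moving a bead from $x$ to $x-q$ can destroy the defect at $x$ but may create a defect at $x-q$ (if $x-q-ap\notin\mathcal A$) and at positions $y$ with $y-ap=x$ (now filled) versus $y-ap=x-q$; simultaneously the paired move at $q-x\mapsto -x$ does the analogous thing. The key lemma I expect to need is that removing a $q$-bar from $\lambda$ equals, after reordering runners of the $pq$-abacus, removing a single rim $q$-hook from one component of the $p$-quotient plus its conjugate — and that the ordinary $s$-weight of a partition never increases under removal of a rim $s$-hook for a different-from... no: removal of a rim $q$-hook changes $\text{wt}_p$ by at most... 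Here I would invoke Fayers' analogous non-bar result from \cite{F1}, or reprove it: for partitions, $\text{wt}_p(\tilde\alpha_q)\le\text{wt}_p(\alpha)$ when $\gcd(p,q)=1$, applied to each component. Combining this component-wise inequality (for the $j\not\equiv0$ components) with the same inequality for bar partitions (for the $j\equiv0$ component, which is an induction/base case of the whole theorem at a smaller scale) via the Remark after Lemma~\ref{lem3} gives
$$\overline{\text{wt}}_p(\overline{\lambda}_q)=\overline{\text{wt}}_p\!\big((\overline{\lambda}_q)^{\phantom{(}}\big)\le\overline{\text{wt}}_p(\lambda),$$
once one checks $(\overline{\lambda}_q)^{(j\bmod p)}$ relates to $(\overline{\lambda}_q)$ compatibly. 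The delicate bookkeeping — making the identification "remove a $q$-bar from $\lambda$" $\leftrightarrow$ "remove a rim $q$-hook from $\lambda^{(j)}$ and $\lambda^{(-j)}$, or a $q$-bar from $\lambda^{(0)}$" precise, and confirming it commutes with passing to cores — is the crux; everything else is the two cited monotonicity facts plus summation over $j=1,\dots,(p-1)/2$.
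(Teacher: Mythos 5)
Your proposal never closes the gap that you yourself identify as ``where the real work lies'', and the one concrete inductive step you commit to is false: a single $q$-bar removal \emph{can} increase the $p$-bar-weight. Take $p=3$, $q=5$ and $\lambda=(10,7,4,1)$, which is a $3$-bar-core, so $\overline{\text{wt}}_3(\lambda)=0$. Removing the $5$-bar $10\mapsto 5$ (i.e.\ replacing $10$ by $5$ and $-5$ by $-10$ in $\mathcal{A}(\lambda)$) gives $\nu=(7,5,4,1)$, and a direct computation gives $\overline{\nu}_3=(4,1)$, so $\overline{\text{wt}}_3(\nu)=(17-5)/3=4$. This is precisely why the paper's proof does not remove one $q$-bar at a time: in each inductive step it removes \emph{all} removable $q$-bars whose moving bead lies in one fixed residue class modulo $p$, repeating until none remain in that class. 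Only then do the affected runners of the $p$-runner abacus shift coherently, and the case analysis on $p$-bar-weight pairs (organised by $|\mathcal{A}(\lambda)\cap\{x,x-q,x-2q\}|$ and by whether $y\equiv q$, $y\equiv 0$ or neither modulo $p$) shows that their number cannot increase. In the example above, clearing the whole class $x\equiv 1$ (mod $3$) removes three $5$-bars and lands on $(5,2)$, which again has $3$-bar-weight $0$.

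Your fallback route rests on a false identification. Removing a $q$-bar from $\lambda$ is \emph{not} the same as removing a rim $q$-hook from $\lambda^{(j\text{ mod }p)}$ and $\lambda^{(-j\text{ mod }p)}$, nor a $q$-bar from $\lambda^{(0\text{ mod }p)}$; that is the description of removing a $pq$-bar (Lemma \ref{lem3} with $c=q$). Since $\gcd(p,q)=1$, a $q$-bar removal moves beads \emph{between} runners of the $p$-runner abacus, so it does not act componentwise on the $p$-quotient at all, and the componentwise weight inequality of \cite{F1} cannot be applied in the way you suggest. (Incidentally, $\overline{(\overline{\lambda}_{pq})}_q=\overline{\lambda}_q$ is true rather than false, since removing a $pq$-bar is a sequence of $q$-bar removals; but this identity does not by itself yield the proposition.)
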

\begin{proof}
We use induction on $\overline{\text{wt}}_q(\lambda)$, with the trivial case being that $\lambda$ is a $q$-bar-core. Assuming that this is not the case, we may find a removable $q$-bar: $y\in\lambda$ such that $y-q\not\in\mathcal{A}(\lambda)$. We will describe how to remove $q$-bars from $\lambda$ to obtain a new partition with the same $q$-bar-core as $\lambda$, with $q$-bar-weight strictly less than $\overline{\text{wt}}_q(\lambda)$, and with $p$-bar-weight no more than $\overline{\text{wt}}_p(\lambda)$.

Let $y$ be any part of $\lambda$ such that $y-q\not\in\mathcal{A}(\lambda)$. For any $x\in\mathcal{A}(\lambda)$ congruent to $y$ modulo $p$ such that $x-q\not\in\mathcal{A}(\lambda)$, replace $x$ with $x-q$, then replace $q-x\in\mathcal{A}(\lambda)$ with $-x$. We keep repeating this process until there are no more such $x$ (the process will terminate because $\lambda$ has finitely many removable $q$-bars), then we name our new bar partition $\nu$. Since each action corresponds to removing a $q$-bar from $\lambda$, and since we have removed at least one $q$-bar (replacing $y$ with $y-q$, and $q-y$ with $-y$, in $\mathcal{A}(\lambda)$), we have 
$$\overline{\nu}_q=\overline{\lambda}_q\text{ and }\overline{\text{wt}}_q(\nu)<\overline{\text{wt}}_q(\lambda).$$ 

We remarked earlier that the $p$-bar-weight of a bar partition $\lambda$ is equal to half the number of pairs $(x,a)\in\mathcal{A}(\lambda)\times\mathbb{N}$ such that $x-ap\not\in\mathcal{A}(\lambda)$. We will call such a pair $(x,a)$ a $p$\textbf{-bar-weight pair for} $\lambda$. It follows from our construction of $\nu$ that for any $x\not\equiv y,y-q,q-y,-y$ (mod $p$) and $a\in\mathbb{N}$, $(x,a)$ is a $p$-bar-weight pair for $\nu$ if and only if it is a $p$-bar-weight pair for $\lambda$. We will consider the remaining possibilities for the residue of $y$ modulo $p$ and show that in each case $\nu$ has no more $p$-bar-weight pairs than $\lambda$, hence $\overline{\text{wt}}_p(\nu)\leq\overline{\text{wt}}_p(\lambda)$. 

First suppose that $y\equiv q$ (mod $p$), so that we obtain $\mathcal{A}(\nu)$ by repeatedly replacing each $x\in\mathcal{A}(\lambda)$ such that $x\equiv q$ (mod $p$) and $q-x\in\mathcal{A}(\lambda)$ with $x-q$, then replacing $q-x$ with $-x$, until there are no more such $x$. Since in this case $y-q\equiv0\equiv q-y$ (mod $p$), we may compare the $p$-bar-weights of $\lambda$ and $\nu$ by counting how many of the three pairs $(x,a)$, $(x-q,a)$, $(x-2q,a)$ are $p$-bar-weight pairs for each of the two bar partitions when $x\equiv q$ (mod $p$) and $a\in\mathbb{N}$. We will do this by considering each of the four possibilities for the size of $X:=\mathcal{A}(\lambda)\cap\{x,x-q,x-2q\}$. 
\begin{flushleft}
$|X|=3$: If $x,x-q,x-2q\in\mathcal{A}(\lambda)$, then $x,x-q,x-2q\in\mathcal{A}(\nu)$, so the number of $p$-bar-weight pairs for $\lambda$, and for $\nu$, amongst the three pairs $(x,a)$, $(x-q,a)$, and $(x-2q,a)$ is equal to $3-|\mathcal{A}(\lambda)\cap\{x-ap,x-ap-q,x-ap-2q\}|$. 
\end{flushleft}
\begin{flushleft}
$|X|=2$: We have $\mathcal{A}(\nu)\cap\{x,x-q,x-2q\}=\{x-q,x-2q\}$, so clearly $(x,a)$ is not a $p$-bar-weight pair for $\nu$. If only one of the three pairs is a $p$-bar-weight pair for $\nu$, it must be $(x-q,a)$ as necessarily $\mathcal{A}(\nu)\cap\{x-ap,x-ap-q,x-ap-2q\}=\{x-ap-2q\}$. If $(x-q,a)$ and $(x-2q,a)$ are both $p$-bar-weight pairs for $\nu$, then we must have $\mathcal{A}(\lambda)\cap\{x-ap,x-ap-q,x-ap-2q\}=\varnothing$, so $\lambda$ also has two $p$-bar-weight pairs out of the three.
\end{flushleft}
\begin{flushleft}
$|X|=1$: We have $\mathcal{A}(\nu)\cap\{x,x-q,x-2q\}=\{x-2q\}$, so neither of $(x,a)$, $(x-q,a)$ can be $p$-bar-weight pairs for $\nu$. If $\mathcal{A}(\lambda)\cap\{x-ap,x-ap-q,x-ap-2q\}=\varnothing$, then exactly one of $(x,a)$, $(x-q,a)$, $(x-2q,a)$ is a $p$-bar-weight pair for $\lambda$. If $|\mathcal{A}(\lambda)\cap\{x-ap,x-ap-q,x-ap-2q\}|\geq1$, then none of $(x,a)$, $(x-q,a)$, $(x-2q,a)$ can be $p$-bar-weight pairs for $\nu$ as $x-ap-2q\in\mathcal{A}(\nu)$. 
\end{flushleft}
\begin{flushleft}
$|X|=0$: Since $\mathcal{A}(\nu)\cap\{x,x-q,x-2q\}=\varnothing$, none of $(x,a)$, $(x-q,a)$, $(x-2q,a)$ are $p$-bar-weight pairs for $\nu$. 
\end{flushleft}

Next suppose that $y\equiv0$ (mod $p$), so that we obtain $\mathcal{A}(\nu)$ by repeatedly replacing each $x\in\mathcal{A}(\lambda)$ such that $p|x$ and $q-x\in\mathcal{A}(\lambda)$ with $x-q$, then replacing $q-x$ with $-x$, until there are no much such $x$. Since $y\equiv-y$ (mod $p$), we can apply the same argument as above, when $y\equiv q$ (mod $p$), and conclude that $\nu$ has no more $p$-bar-weight pairs than $\lambda$ amongst $(x+q,a)$, $(x,a)$, $(x-q,a)$, and thus $\overline{\text{wt}}_p(\nu)\leq\overline{\text{wt}}_p(\lambda)$. 

Finally, suppose that $y\not\equiv q,0$ (mod $p$), so that $y\not\equiv-y$ and $y-q\not\equiv q-y$. In this case, we need only consider replacing all pairs $x,q-x\in\mathcal{A}(\lambda)$ such that $x\equiv y$ (mod $p$) with $x-q,-x$, so we are in a simpler situation; $\overline{\text{wt}}_p(\nu)\leq\overline{\text{wt}}_p(\lambda)$ since for any $x\equiv y$ (mod $p$) and $a\in\mathbb{N}$, $\nu$ has no more $p$-bar-weight pairs than $\lambda$ amongst $(x,a)$ and $(x-q,a)$.

Hence $\nu$ has no more $p$-bar-weight pairs than $\lambda$, and therefore has $p$-bar-weight no more than the $p$-bar-weight of $\lambda$. The result follows by induction.
\end{proof}

From this purely combinatorial result we obtain an interesting algebraic corollary. 

\begin{cor}For any $\mu\in\mathcal{P}_2$, if $w$ is the weight of the $p$-block containing $[\mu]$, a spin representation of the symmetric group $\mathfrak{S}_r$ ($r\in\mathbb{N}$), and $[\lambda]$ is a spin representation of $\mathfrak{S}_{r+iq}$ corresponding to $\lambda\in\mathcal{P}_2$ obtained by adding $q$-bars to $\mu$, for any $i\in\mathbb{N}$, then $[\lambda]$ belongs to a $p$-block of weight $\geq w$. \newline
In particular, if $[\mu]$ belongs to a $p$-block of weight $w>0$, then $[\lambda]$ belongs to a block of positive weight. 
\end{cor}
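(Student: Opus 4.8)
The plan is to separate the statement into its representation-theoretic content and a purely combinatorial inequality, and to obtain the latter from Proposition~\ref{prop1}. Here $p$ is understood to be an odd prime. The classification of the $p$-blocks of the projective representations of $\mathfrak{S}_n$ (the projective analogue of the theory of James and of the Brauer-Robinson theorem recalled above; see \cite{HH}) says that the spin representations $[\alpha]$ and $[\beta]$ lie in the same $p$-block if and only if $\overline{\alpha}_p=\overline{\beta}_p$, and that the weight of the $p$-block containing $[\nu]$ equals $\overline{\text{wt}}_p(\nu)$. Hence $w=\overline{\text{wt}}_p(\mu)$, the block of $[\lambda]$ has weight $\overline{\text{wt}}_p(\lambda)$, and it suffices to prove the combinatorial statement
$$\overline{\text{wt}}_p(\lambda)\geq\overline{\text{wt}}_p(\mu)\qquad\text{whenever }\lambda\text{ is obtained from }\mu\text{ by adding }q\text{-bars.}$$

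Since adding $q$-bars does not change the $q$-bar-core, we have $\overline{\lambda}_q=\overline{\mu}_q$ and $\overline{\text{wt}}_q(\lambda)=\overline{\text{wt}}_q(\mu)+i$, so I would argue by induction on $i$. The case $i=0$ is vacuous, and the inductive step reduces at once to $i=1$: one removes a single $q$-bar from $\lambda$ to obtain $\mu$, and must check that $\overline{\text{wt}}_p(\mu)\leq\overline{\text{wt}}_p(\lambda)$. To do this I would re-run the $p$-bar-weight-pair bookkeeping from the proof of Proposition~\ref{prop1}: writing $\mathcal{A}(\mu)$ for the configuration obtained from $\mathcal{A}(\lambda)$ by replacing $\{y,q-y\}$ with $\{y-q,-y\}$ (for $y\in\lambda$ with $y-q\notin\mathcal{A}(\lambda)$), only the $p$-bar-weight pairs supported on the residue classes $\pm y,\pm(y-q)$ modulo $p$ change; splitting into the cases $y\equiv q$, $y\equiv0$ and $y\not\equiv0,q\pmod p$ exactly as in that proof, one verifies that $\mu$ acquires no more $p$-bar-weight pairs than $\lambda$ has, whence $\overline{\text{wt}}_p(\mu)\leq\overline{\text{wt}}_p(\lambda)$. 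The induction then closes, and the last sentence of the corollary is immediate, since $w>0$ forces $\overline{\text{wt}}_p(\lambda)\geq w>0$.

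The main obstacle is the $i=1$ step. In Proposition~\ref{prop1} one deliberately removes \emph{all} removable $q$-bars in a fixed residue class modulo $p$, and the case analysis (notably the sub-cases with $|X|\in\{1,2\}$) genuinely exploits this saturation; for the removal of a single $q$-bar it is conceivable that $\overline{\text{wt}}_p$ rises, only to be compensated once the other $q$-bars in that class are removed as well. Should that happen, the induction on $i$ must be organised more carefully — choosing at each stage a removable $q$-bar of $\lambda/\mu$ whose residue class modulo $p$ is one for which Proposition~\ref{prop1}'s saturated-batch argument applies and still leaves a bar partition obtainable by adding $q$-bars to $\mu$ — and verifying that a suitable choice always exists is where the real work would lie; failing such a choice, one falls back on a direct, more delicate version of the single-$q$-bar bookkeeping.
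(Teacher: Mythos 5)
The paper gives no proof of this corollary: the intended argument is just the block-theoretic translation you describe (for $p$ an odd prime the weight of the spin $p$-block containing $[\nu]$ is $\overline{\text{wt}}_p(\nu)$) followed by a direct appeal to Proposition~\ref{prop1}. Your translation is fine, but the combinatorial inequality you then set out to prove --- $\overline{\text{wt}}_p(\lambda)\geq\overline{\text{wt}}_p(\mu)$ whenever $\lambda$ is obtained from $\mu$ by adding $q$-bars --- is not what Proposition~\ref{prop1} gives (that proposition compares $\lambda$ with $\overline{\lambda}_q$, not with an arbitrary intermediate partition $\mu$), and it is in fact false. The obstacle you flag in your final paragraph is fatal rather than technical: removing a single $q$-bar can strictly increase the $p$-bar-weight, and no reordering of the removals can repair this, because the endpoint $\mu$ itself already violates the inequality. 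Concretely, take $p=3$, $q=5$, $\mu=(5,3,2)$ and $\lambda=(8,5,2)$. Then $\lambda$ is obtained from $\mu$ by adding one $5$-bar (the part $3$ becomes $8$); $\lambda$ is a $3$-bar-core, so $\overline{\text{wt}}_3(\lambda)=0$; but $\mu$ has the removable $3$-bar consisting of the part $3$, with $\overline{\mu}_3=(5,2)$, so $w=\overline{\text{wt}}_3(\mu)=1>0=\overline{\text{wt}}_3(\lambda)$.

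Consequently the corollary as literally stated is itself incorrect, and your proposal cannot be completed along these lines. What does follow from Proposition~\ref{prop1} --- and is surely what is intended --- is the statement with $\mu$ assumed to be a $q$-bar-core, or equivalently the conclusion that $[\lambda]$ lies in a $p$-block of weight at least $\overline{\text{wt}}_p(\overline{\mu}_q)=\overline{\text{wt}}_p(\overline{\lambda}_q)$: since adding $q$-bars preserves the $q$-bar-core, Proposition~\ref{prop1} applied to $\lambda$ yields this immediately, with no induction on $i$ and no single-bar bookkeeping. Your instinct that the saturation in the proof of Proposition~\ref{prop1} (removing \emph{all} removable $q$-bars in a fixed residue class modulo $p$) is essential, and that the single-$q$-bar step is the weak point, was exactly right; the lesson is that the corollary only inherits the comparison with the full $q$-bar-core, not with every partition along the way.
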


Next we will consider the set $\overline{C}_{p,q}$ containing all bar partitions $\lambda$ which satisfy 
$$\overline{\text{wt}}_p(\lambda)=\overline{\text{wt}}_p(\overline{\lambda}_q).$$

\begin{prop}\label{prop2}
For all $\lambda\in\mathcal{P}_2$, the equality $\overline{\text{wt}}_p(\overline{\lambda}_q)=\overline{\text{wt}}_p(\lambda)$ holds if and only if there do not exist integers $a,b,c$ such that: 
\begin{align*}a\equiv b\:(\text{mod }p);\\
a\equiv c\:(\text{mod }q);\\
a,b+c-a\in\mathcal{A}(\lambda);\\
b,c\not\in\mathcal{A}(\lambda).\end{align*}
\end{prop}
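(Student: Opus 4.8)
The plan is to prove both directions by tracking a single $q$-bar removal, mirroring the bookkeeping in the proof of Proposition~\ref{prop1}. For the ``if'' direction (equality forces no such triple), I would argue contrapositively: suppose integers $a,b,c$ exist with $a\equiv b\ (\mathrm{mod}\ p)$, $a\equiv c\ (\mathrm{mod}\ q)$, $a,b+c-a\in\mathcal A(\lambda)$ and $b,c\notin\mathcal A(\lambda)$. The idea is that the element $a\in\mathcal A(\lambda)$ witnesses a removable $q$-bar in the direction of $c$ (since $a-c\in q\mathbb Z$ and, after possibly translating by a multiple of $q$, $c\notin\mathcal A(\lambda)$ means we really can slide $a$ down towards $c$), while simultaneously $a$ is ``blocked'' in the $p$-direction by $b\notin\mathcal A(\lambda)$, and the companion bead $b+c-a$ — which is exactly the reflection making $b,c$ play symmetric roles — absorbs the change. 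Concretely I would run the construction of $\nu$ from the proof of Proposition~\ref{prop1} starting from the $q$-bar determined by $a\mapsto a-q$ (or $a+q$), and show that the displacement strictly decreases the count of $p$-bar-weight pairs: the pair corresponding to the $(a,\,\cdot\,)$ blockage by $b$ is destroyed without a compensating new pair being created, precisely because $b+c-a\in\mathcal A(\lambda)$ fills the slot that would otherwise have become a new blockage. Hence $\overline{\mathrm{wt}}_p(\nu)<\overline{\mathrm{wt}}_p(\lambda)$ while $\overline{\nu}_q=\overline\lambda_q$, so $\overline{\mathrm{wt}}_p(\overline\lambda_q)\le\overline{\mathrm{wt}}_p(\nu)<\overline{\mathrm{wt}}_p(\lambda)$, contradicting equality.

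For the ``only if'' direction, I would show that if no such triple $(a,b,c)$ exists then every single $q$-bar removal from $\lambda$ leaves $\overline{\mathrm{wt}}_p$ unchanged, and then induct on $\overline{\mathrm{wt}}_q(\lambda)$ exactly as in Proposition~\ref{prop1}, noting that the nonexistence condition is inherited: removing a $q$-bar can only shrink $\mathcal A(\lambda)\cap(\text{bounded region})$ in a controlled way, and one checks that a forbidden triple for $\nu$ would pull back to a forbidden triple for $\lambda$. The key computation is the refinement of the four-case analysis ($|X|=0,1,2,3$) from Proposition~\ref{prop1}'s proof: there, $\nu$ was shown to have \emph{no more} $p$-bar-weight pairs than $\lambda$; here I must pin down \emph{exactly} when the count strictly drops. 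In the $|X|=2$ and $|X|=1$ cases, the count stays equal unless some slot $x-ap-2q$ (resp.\ $x-ap-q$, or the analogue in the $y\equiv0$ and $y\not\equiv 0,q$ cases) is occupied while its partners are not — and unwinding the residues shows that occupied slot is exactly an element playing the role of $a$, the empty partners play the roles of $b$ and $c$, and the reflection identity $a\equiv b\ (\mathrm{mod}\ p)$, $a\equiv c\ (\mathrm{mod}\ q)$, $b+c-a\in\mathcal A(\lambda)$ falls out of the abacus geometry.

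The main obstacle I anticipate is the reflection/symmetry bookkeeping: the set $\mathcal A(\lambda)$ satisfies $x\in\mathcal A(\lambda)\Leftrightarrow -x\notin\mathcal A(\lambda)$, and a $q$-bar move changes $\mathcal A(\lambda)$ at \emph{two} spots ($x\mapsto x-q$ and $q-x\mapsto -x$) rather than one, so the naive ``count pairs $(x,a)$ with $x-ap\notin\mathcal A(\lambda)$'' must be handled carefully to avoid double-counting or sign errors — especially in the degenerate residue cases $y\equiv q$ and $y\equiv 0\ (\mathrm{mod}\ p)$ where $y$, $y-q$, $q-y$, $-y$ collapse onto two residue classes. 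I would organise this by always working with \emph{unordered} data: the symmetric quantity $b+c-a$ (invariant under swapping $b\leftrightarrow c$ together with the induced swap $p\leftrightarrow q$ roles) is the right object, and I expect the condition in the statement to be exactly the ``orbit-invariant'' obstruction that survives this symmetrisation. Once the strict-inequality criterion for one $q$-bar move is isolated, chaining it through the induction and checking inheritance of the nonexistence condition is routine.
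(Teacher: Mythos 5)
Your ``only if'' direction (no bad triple $\Rightarrow$ equality) matches the paper's argument: induct on $\overline{\text{wt}}_q(\lambda)$, form $\nu$ by the batch removal from the proof of Proposition \ref{prop1}, and observe that when there is no pair $x,q-x\not\in\mathcal{A}(\lambda)$ with $x\equiv y\:(\text{mod }p)$, the step is exactly the action of a generator $\delta_i$ of $\mathfrak{W}_p$, so $\overline{\text{wt}}_p$ is preserved by Lemma \ref{lem4}(3) and bad triples for $\nu$ correspond bijectively, via $\delta_i$, to bad triples for $\lambda$. Your identification of the strict-drop configuration is also the right one: the local obstruction is precisely the triple $(y,x,y-q)$ with $x\equiv y\:(\text{mod }p)$ and $x,q-x\not\in\mathcal{A}(\lambda)$, so that $b+c-a=x-q\in\mathcal{A}(\lambda)$ by the symmetry of $\mathcal{A}(\lambda)$.

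The gap is in your contrapositive (``if'') direction. You propose to take an arbitrary bad triple $(a,b,c)$ and exhibit a single removal step, ``the $q$-bar determined by $a$'', which strictly decreases $\overline{\text{wt}}_p$. This fails in general for two reasons. First, $a-q$ may lie in $\mathcal{A}(\lambda)$ even though $c=a-kq$ does not, so there need not be a removable $q$-bar at $a$ at all; and you cannot simply replace $c$ by a nearer empty position $a-jq$, because the hypothesis $b+c-a\in\mathcal{A}(\lambda)$ is tied to the original $c$ and is exactly what your ``no compensating new pair'' claim leans on. Second, even when a removable $q$-bar at $a$ exists, the batch step can leave $\overline{\text{wt}}_p$ unchanged while merely transporting the triple: this is the case $\nu=\delta_i\lambda$, where the weight is constant and $(\delta_ia,\delta_ib,\delta_ic)$ is bad for $\nu$; the strict decrease may only occur several steps later. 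The paper resolves this by proving both directions inside one induction with the dichotomy: at each step either $\nu=\delta_i\lambda$ (weight preserved, bad triples transported bijectively) or a local bad triple $(y,x,y-q)$ exists (and then the refined analysis of $\mathcal{A}(\lambda)\cap\{x,x-q,x-2q\}$ gives the strict drop). A general bad triple must eventually localise, since otherwise it would be carried all the way down to $\overline{\lambda}_q$, and a $q$-bar-core admits no bad triple because a bad triple forces a removable $q$-bar. To repair your argument you need this transport-and-localise mechanism, or a genuinely new direct comparison of $\lambda$ with $\overline{\lambda}_q$.
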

\begin{proof}
Say that $(a,b,c)$ is a bad triple for $\lambda$ if $a,b,c$ satisfy the conditions above. When $(a,b,c)$ is bad, either $a>c$ or $b+c-a>b$; either way, since $a\equiv c$ (mod $q$) we find that $\lambda$ has a removable $q$-bar and is thus not a $q$-bar-core. Hence the proposition is true when $\lambda\in\overline{C}_q$. 

Now we assume $\lambda$ is not a $q$-bar-core, and choose $y\in\lambda$ such that $y-q\not\in\mathcal{A}(\lambda)$. We define a new bar partition $\nu$ as in the proof of Proposition \ref{prop1}: by repeatedly replacing pairs $x,q-x\in\mathcal{A}(\lambda)$ with $x-q$ and $-x$, respectively, when $x\equiv y$ (mod $p$). \newline
By induction it suffices to show that either: \newline
$\overline{\text{wt}}_p(\nu)=\overline{\text{wt}}_p(\lambda)$, and there is a bad triple for $\nu$ \textit{iff}. there is a bad triple for $\lambda$; or \newline
$\overline{\text{wt}}_p(\nu)<\overline{\text{wt}}_p(\lambda)$, and there is a bad triple for $\lambda$.

Suppose first that there are no pairs $x,q-x$ satisfying 
\begin{align}\label{ass}x,q-x\not\in\mathcal{A}(\lambda)\text{ and }x\equiv y\text{ (mod }p\text{)}.\end{align} 
We first assume that $y\equiv q$ (mod $p$), and let $x\equiv q$ (mod $p$). Then there are 8 different possibilities for the intersection of $\mathcal{A}(\lambda)$ and $\{x,x-q,x-2q\}$: 
\begin{align*}x,x-q,x-2q\in&\mathcal{A}(\lambda);\\
x,x-q\in&\mathcal{A}(\lambda)\not\ni x-2q;\\
x\in&\mathcal{A}(\lambda)\not\ni x-q,x-2q;\\
&\mathcal{A}(\lambda)\not\ni x,x-q,x-2q;\\
x,x-2q\in&\mathcal{A}(\lambda)\not\ni x-q;\\
x-q,x-2q\in&\mathcal{A}(\lambda)\not\ni x;\\
x-q\in&\mathcal{A}(\lambda)\not\ni x,x-2q;\\
x-2q\in&\mathcal{A}(\lambda)\not\ni x,x-q.\end{align*} 
However, the last four possibilities are all excluded by our assumption that there are no pairs $x,q-x$ satisfying (\ref{ass}), so we find that $\nu=\delta_0\lambda$. Therefore $\overline{\text{wt}}_p(\nu)=\overline{\text{wt}}_p(\lambda)$ by Lemma \ref{lem4}(3), and $(a,b,c)$ is bad for $\lambda$ exactly when $(\delta_0a,\delta_0b,\delta_0c)$ is bad for $\nu$, since $a\equiv b$ (mod $p$) $\Rightarrow \delta_0(b+c-a)=\delta_0b+\delta_0c-\delta_0a$.

If $y\equiv 0$ (mod $p$) we are in an identical situation to the above: $\nu=\delta_0\lambda$. 

When $y\not\equiv q,0$ (mod $p$), we have a similar situation: since there are no pairs $x,q-x$ satisfying (\ref{ass}), we have $\nu=\delta_i\lambda$, where $(i+1)q\equiv y$ (mod $p$). Hence $\overline{\text{wt}}_p(\nu)=\overline{\text{wt}}_p(\lambda)$, and $(a,b,c)$ is bad for $\lambda\Leftrightarrow (\delta_ia,\delta_ib,\delta_ic)$ is bad for $\nu$.

Finally, we assume that there is a pair $x,q-x$ satisfying (\ref{ass}), so that $(y,x,y-q)$ is a bad triple for $\lambda$. We argue that $\overline{\text{wt}}_p(\nu)<\overline{\text{wt}}_p(\lambda)$, as in the proof of Proposition \ref{prop1}: \newline
If $y\equiv q$ (mod $p$) and we let $z:=$ max$\{x,y\}$, $l:=\nicefrac{|x-y|}{p}$, then exactly one of $(z,l)$, $(z-q,l)$ is a $p$-bar-weight pair for $\lambda$ ($(z,l)$ if $x<y$, or $(z-q,l)$ if $x>y$). If $(z-2q,l)$ is a $p$-bar-weight pair for $\lambda$, then $(z-q,l)$ is a $p$-bar-weight pair for $\nu$ but neither of $(z,l)$, $(z-2q,l)$ are; and if $(z-2q,l)$ is not a $p$-bar-weight pair for $\lambda$, then none of $(z,l)$, $(z-q,l)$, $(z-2q,l)$ are $p$-bar-weight pairs for $\nu$. \newline
If instead we have $y\equiv 0$ (mod $p$), and again let $z:=$ max$\{x,y\}$ and $l:=\nicefrac{|x-y|}{p}$, then exactly one of $(z,l)$, $(z-q,l)$ is a $p$-bar-weight pair for $\lambda$. Now if $(z+q,l)$ is a $p$-bar-weight pair for $\lambda$, then $(z,l)$ is a $p$-bar-weight pair for $\nu$ but neither of $(z+q,l)$, $(z-q,l)$ are; and if $(z+q,l)$ is not a $p$-bar-weight pair for $\lambda$, then none of $(z+q,l)$, $(z,l)$, $(z-q,l)$ are $p$-bar-weight pairs for $\nu$. \newline
If $y\not\equiv q,0$ (mod $p$) then we define $z$ and $l$ as above so that exactly one of $(z,l),(z-q,l)$ is a $p$-bar-weight pair for $\lambda$ and neither is a $p$-bar-weight pair for $\nu$. \newline
Thus it follows from the proof of Proposition \ref{prop1} that there are less $p$-bar-weight pairs for $\nu$ then there are $p$-bar-weight pairs for $\lambda$. 
\end{proof}

\begin{cor}\label{cor2}
$\overline{C}_{p,q}=\overline{C}_{q,p}$. 
\end{cor}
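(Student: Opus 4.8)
The claim $\overline{C}_{p,q}=\overline{C}_{q,p}$ asks that the condition ``$\overline{\mathrm{wt}}_p(\overline{\lambda}_q)=\overline{\mathrm{wt}}_p(\lambda)$'' be symmetric in the roles of $p$ and $q$. The plan is simply to invoke the characterisation just proved in Proposition~\ref{prop2}: $\lambda\in\overline{C}_{p,q}$ if and only if there is no \emph{bad triple} for $\lambda$, meaning no integers $a,b,c$ with $a\equiv b\ (\mathrm{mod}\ p)$, $a\equiv c\ (\mathrm{mod}\ q)$, $a,b+c-a\in\mathcal{A}(\lambda)$ and $b,c\notin\mathcal{A}(\lambda)$. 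Since $\mathcal{A}(\lambda)$ is independent of the choice of abacus, this condition depends only on the underlying set $\mathcal{A}(\lambda)$ together with the pair of moduli.

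The key observation is that the defining condition for a bad triple is \emph{manifestly symmetric} under swapping $p\leftrightarrow q$ combined with the substitution $b\leftrightarrow c$. Indeed, if $(a,b,c)$ is a bad triple for $\lambda$ with respect to the ordered pair $(p,q)$, then setting $a':=a$, $b':=c$, $c':=b$ gives $a'\equiv b'\ (\mathrm{mod}\ q)$, $a'\equiv c'\ (\mathrm{mod}\ p)$, $a',\,b'+c'-a'=b+c-a\in\mathcal{A}(\lambda)$, and $b',c'\notin\mathcal{A}(\lambda)$, so $(a',b',c')$ is a bad triple for $\lambda$ with respect to the ordered pair $(q,p)$. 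Hence $\lambda$ admits a bad triple for $(p,q)$ if and only if it admits one for $(q,p)$, and therefore $\lambda\in\overline{C}_{p,q}\iff\lambda\in\overline{C}_{q,p}$.

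I do not anticipate any real obstacle: the corollary is an immediate consequence of the symmetry of the combinatorial criterion in Proposition~\ref{prop2}, and the only thing to check is the trivial fact that interchanging the names of the two elements $b$ and $c$ turns a $(p,q)$-bad triple into a $(q,p)$-bad triple and vice versa (the condition on $a$ and on $b+c-a$ being symmetric in $b,c$). One might remark, as a sanity check, that this matches the expectation from the statement being symmetric: the original equality $\overline{\mathrm{wt}}_p(\overline{\lambda}_q)=\overline{\mathrm{wt}}_p(\lambda)$ is the assertion that removing $q$-bars does not decrease the $p$-bar-weight ``efficiently'', and Proposition~\ref{prop1} shows both $\overline{\mathrm{wt}}_p(\overline{\lambda}_q)\le\overline{\mathrm{wt}}_p(\lambda)$ and $\overline{\mathrm{wt}}_q(\overline{\lambda}_p)\le\overline{\mathrm{wt}}_q(\lambda)$, so a priori the two equality conditions could differ; Proposition~\ref{prop2} is precisely what makes them coincide.
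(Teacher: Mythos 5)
Your proof is correct and is essentially the paper's own argument: the paper's proof of this corollary consists of the single sentence that the condition in Proposition~\ref{prop2} is symmetric in $p$ and $q$, and your swap $b\leftrightarrow c$ just makes that symmetry explicit.
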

\begin{proof}
The condition in Proposition \ref{prop2} is symmetric in $p$ and $q$.
\end{proof}

While the last result may seem surprising given the definition of $\overline{C}_{p,q}$, this symmetry is the motivation behind the study of this set. Furthermore, the next result shows that $\overline{C}_{p,q}$ is closed under the level $q$ action of $\mathfrak{W}_p$. 

\begin{prop}\label{prop3}
For any $\lambda\in\mathcal{P}_2$ and $a\in\mathfrak{W}_p$, if $\lambda\in\overline{C}_{p,q}$, then $a\lambda\in\overline{C}_{p,q}$. 
\end{prop}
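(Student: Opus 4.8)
\textbf{Proof plan for Proposition \ref{prop3}.}

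The plan is to reduce to the case where $a=\delta_i$ is a single generator, since membership in $\overline{C}_{p,q}$ is a property preserved or not under each step, and the generators generate $\mathfrak{W}_p$. Given $\lambda\in\overline{C}_{p,q}$, I want to show $\delta_i\lambda\in\overline{C}_{p,q}$, and for this I will use the characterisation from Proposition \ref{prop2}: $\mu\in\overline{C}_{p,q}$ precisely when there is no bad triple for $\mu$, i.e.\ no integers $a,b,c$ with $a\equiv b\ (\mathrm{mod}\ p)$, $a\equiv c\ (\mathrm{mod}\ q)$, $a,b+c-a\in\mathcal{A}(\mu)$ and $b,c\notin\mathcal{A}(\mu)$. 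So the heart of the matter is: if $(a,b,c)$ is a bad triple for $\delta_i\lambda$, produce a bad triple for $\lambda$, contradicting $\lambda\in\overline{C}_{p,q}$.

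The key observation is that the level $q$ action of $\delta_i$ on $\mathbb{Z}$ is an affine-linear map on each residue class modulo $p$ — it translates by a fixed multiple of $q$ (namely $0$, $\pm q$, or $\pm 2q$) depending only on the residue of the argument modulo $p$. Since $a\equiv b\ (\mathrm{mod}\ p)$, the elements $a$ and $b$ lie in the same residue class mod $p$, so $\delta_i$ acts on them by the same translation; and since the action commutes with negation (as used throughout Lemma \ref{lem4}), one checks that $\delta_i$ respects the relation $b+c-a$ in the relevant sense. More precisely, I would apply $\delta_i^{-1}=\delta_i$ to a putative bad triple for $\delta_i\lambda$: set $a'=\delta_i a$, $b'=\delta_i b$, $c'=\delta_i c$, note $\mathcal{A}(\lambda)=\delta_i\mathcal{A}(\delta_i\lambda)$, so the membership conditions $a',\,(b'+c'-a')\in\mathcal{A}(\lambda)$ and $b',c'\notin\mathcal{A}(\lambda)$ transfer provided $b'+c'-a'=\delta_i(b+c-a)$, and the congruences $a'\equiv b'\ (\mathrm{mod}\ p)$, $a'\equiv c'\ (\mathrm{mod}\ q)$ are preserved (the first because $\delta_i$ preserves residues mod $p$; the second because $\delta_i$ only ever adds multiples of $q$). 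This is exactly the kind of computation already carried out in the proof of Proposition \ref{prop2} in the cases $\nu=\delta_i\lambda$, where it is observed that $(a,b,c)$ is bad for $\lambda$ iff $(\delta_i a,\delta_i b,\delta_i c)$ is bad for $\nu$, using that $a\equiv b\ (\mathrm{mod}\ p)\Rightarrow\delta_i(b+c-a)=\delta_i b+\delta_i c-\delta_i a$.

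The main obstacle — or rather the point requiring care — is verifying the identity $\delta_i(b+c-a)=\delta_i b+\delta_i c-\delta_i a$ whenever $a\equiv b\ (\mathrm{mod}\ p)$. This is not automatic because $\delta_i$ is only piecewise-linear; it holds because $b+c-a\equiv c\ (\mathrm{mod}\ p)$ (using $a\equiv b$), so $\delta_i$ shifts $b+c-a$ by the same multiple of $q$ that it shifts $c$ by, while it shifts $a$ and $b$ by a common multiple of $q$, and these shifts cancel correctly. One should double-check the boundary generator $\delta_0$ and the end generator $\delta_{(p-1)/2}$ separately since their shift patterns differ ($\pm 2q$ versus $\pm q$), but in each case the residue-class bookkeeping is identical to what appears in Proposition \ref{prop2}. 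Once this identity is in hand, the bad-triple criterion transfers cleanly and the proof concludes: no bad triple for $\lambda$ forces no bad triple for $\delta_i\lambda$, hence $\delta_i\lambda\in\overline{C}_{p,q}$, and induction on the length of $a$ finishes the general case.
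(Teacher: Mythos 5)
Your argument is correct, but it takes a genuinely different route from the paper. The paper's proof is a two-line deduction from Lemma \ref{lem4}: part (1) gives $\overline{(a\lambda)}_q=\overline{\lambda}_q$ and part (3) gives $\overline{\text{wt}}_p(a\lambda)=\overline{\text{wt}}_p(\lambda)$, so the defining equality $\overline{\text{wt}}_p(\overline{\lambda}_q)=\overline{\text{wt}}_p(\lambda)$ transfers immediately to $a\lambda$. You instead route the argument through the bad-triple characterisation of Proposition \ref{prop2}, showing that $\delta_i$ carries bad triples for $\delta_i\lambda$ to bad triples for $\lambda$; the crucial identity $\delta_i(b+c-a)=\delta_i b+\delta_i c-\delta_i a$ when $a\equiv b\:(\text{mod }p)$ is exactly the observation the paper itself makes \emph{inside} the proof of Proposition \ref{prop2}, and your justification of it (each $\delta_i$ translates by a multiple of $q$ determined only by the residue class mod $p$, so the shifts of $a$ and $b$ agree and the shift of $b+c-a$ agrees with that of $c$) is sound, including for $\delta_0$ and $\delta_{\nicefrac{(p-1)}{2}}$. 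There is no circularity, since Proposition \ref{prop2} precedes this result and does not depend on it. The trade-off is that your proof leans on the heavier Proposition \ref{prop2} where the paper only needs the elementary invariance statements of Lemma \ref{lem4}; on the other hand, your version makes the closure of $\overline{C}_{p,q}$ under the action visibly compatible with the $p$-$q$ symmetry of Corollary \ref{cor2}. One small wording slip: $\delta_i$ does not \emph{preserve} residues modulo $p$ (e.g.\ $\delta_0$ moves the class of $q$ to the class of $-q$); it merely induces a well-defined permutation of $\mathbb{Z}/p\mathbb{Z}$, which is all you need to conclude $\delta_i a\equiv\delta_i b\:(\text{mod }p)$ from $a\equiv b\:(\text{mod }p)$ — and indeed your own description of the action elsewhere in the proposal states the correct fact.
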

\begin{proof}
Using Lemma \ref{lem4}($1,3$) and the fact that $\lambda\in\overline{C}_{p,q}$, we have 
$$\overline{\text{wt}}_p(\overline{(a\lambda)}_q)=\overline{\text{wt}}_p(\overline{\lambda}_q)=\overline{\text{wt}}_p(\lambda)=\overline{\text{wt}}_p(a\lambda).$$ 
\end{proof}

Interchanging $p$ and $q$ and appealing to Corollary \ref{cor2}, we see that $\overline{C}_{p,q}$ is also a union of orbits for the level $p$ action of $\mathfrak{W}_q$. The actions of $\mathfrak{W}_p$ and $\mathfrak{W}_q$ clearly commute because the action of $\mathfrak{W}_p$ on an integer does not change its residue modulo $q$, and the action of $\mathfrak{W}_q$ does not change its residue modulo $p$. Hence $\overline{C}_{p,q}$ is a union of orbits for the action of $\mathfrak{W}_p\times\mathfrak{W}_q$. We will look at these orbits in more detail, first by considering just the level $q$ action of $\mathfrak{W}_p$.

\begin{prop} \label{prop4}
Suppose $\lambda\in\mathcal{P}_2$, and let $\mathcal{O}$ be the orbit containing $\lambda$ under the level $q$ action of $\mathfrak{W}_p$. Then the following are equivalent: \begin{enumerate}
\item $\lambda\in\overline{C}_{p,q}$; 
\item $\mathcal{O}$ contains a $q$-bar-core; 
\item $\mathcal{O}$ contains $\overline{\lambda}_q$.
\end{enumerate}
\end{prop}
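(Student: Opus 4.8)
The plan is to prove the chain of implications $(3)\Rightarrow(2)\Rightarrow(1)\Rightarrow(3)$, since $(3)\Rightarrow(2)$ is immediate (a $q$-bar-core is a $q$-bar-core) and $(2)\Rightarrow(1)$ follows quickly from what is already in hand. For $(2)\Rightarrow(1)$: if $\mathcal{O}$ contains a $q$-bar-core $\mu$, then by Lemma~\ref{lem4}(1) we have $\overline{\mu}_q=\overline{\lambda}_q$, and since $\mu$ is a $q$-bar-core this gives $\overline{\mu}_q=\mu$; combining Lemma~\ref{lem4}(3) (the level $q$ action preserves $p$-bar-weight) with the fact that $\mu=\overline{\mu}_q$, we get $\overline{\text{wt}}_p(\lambda)=\overline{\text{wt}}_p(\mu)=\overline{\text{wt}}_p(\overline{\mu}_q)=\overline{\text{wt}}_p(\overline{\lambda}_q)$, so $\lambda\in\overline{C}_{p,q}$.

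The substantive direction is $(1)\Rightarrow(3)$, and here I would exploit the $q$-weighted $p$-quotient machinery of Proposition~\ref{prop8}. First observe that $\overline{\lambda}_q$ lies in \emph{some} level $q$ orbit of $\mathfrak{W}_p$; the claim is that this orbit is $\mathcal{O}$ itself, i.e. that $\lambda$ and $\overline{\lambda}_q$ have the same $q$-weighted $p$-quotient. Since removing a $q$-bar from $\lambda$ does not change the multiset of residues modulo $q$ of the elements of the $p$-set (the $q$-bar-core is unchanged), the ``$\Delta$ mod $q$'' data of $\lambda$ and $\overline{\lambda}_q$ already agree; what must be matched up is the $p$-quotient components attached to each residue class. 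The hypothesis $\lambda\in\overline{C}_{p,q}$, via Proposition~\ref{prop2}, says precisely that there is no ``bad triple'' for $\lambda$, and I would use this to show that passing from $\lambda$ to $\overline{\lambda}_q$ (by successively removing $q$-bars, each of which, by the analysis in the proof of Proposition~\ref{prop1}, either leaves the $p$-bar-weight unchanged or strictly decreases it) in fact never decreases the $p$-bar-weight, and moreover reorganises the $p$-quotient components only by the kind of permutation within a fixed residue class mod $q$ that, as shown in the proof of Proposition~\ref{prop8}, is realised by elements of $\mathfrak{W}_p$ acting at level $q$. Concretely: the absence of a bad triple forces each $q$-bar removal to be (up to the level $q$ action) of the ``$\delta_i$'' type appearing in the first paragraph of the proof of Proposition~\ref{prop2}, which permutes quotient components without loss; iterating until we reach the $q$-bar-core and then invoking Proposition~\ref{prop8} gives that $\overline{\lambda}_q\in\mathcal{O}$.

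An alternative and perhaps cleaner route to $(1)\Rightarrow(3)$ is to argue directly with Proposition~\ref{prop2} in the form used in its proof: take any removable $q$-bar of $\lambda$ (if $\lambda$ is already a $q$-bar-core there is nothing to prove), form $\nu$ as in that proof, and note that since $\lambda\in\overline{C}_{p,q}$ there is no bad triple for $\lambda$, so by the case analysis there we must be in the situation $\nu=\delta_i\lambda$ for a suitable generator $\delta_i$, whence $\nu\in\mathcal{O}$, $\nu\in\overline{C}_{p,q}$ (by Proposition~\ref{prop3}), and $\overline{\text{wt}}_q(\nu)<\overline{\text{wt}}_q(\lambda)$; induction on $\overline{\text{wt}}_q(\lambda)$ then produces an element of $\mathcal{O}\cap\overline{C}_q$, which by Lemma~\ref{lem4}(1) must be $\overline{\lambda}_q$. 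I expect the main obstacle to be the bookkeeping in this inductive step — specifically, verifying cleanly that the non-existence of a bad triple genuinely rules out every ``bad'' case of the $q$-bar-removal in the proof of Proposition~\ref{prop2}, so that each step stays inside the orbit; once that is pinned down, the equivalence falls out, and combining it with Corollary~\ref{cor2} and the commuting actions of $\mathfrak{W}_p$ and $\mathfrak{W}_q$ will presumably set up the type $\tilde C\times\tilde C$ orbit description promised in the abstract.
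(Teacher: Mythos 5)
Your proposal is correct and, in its second route for $(1)\Rightarrow(3)$, is essentially the paper's own argument: the paper likewise handles $(2)\Rightarrow(1)$ via the invariance of $p$-bar-weight and $q$-bar-core under the level $q$ action (packaged as Proposition~\ref{prop3} together with $\overline{C}_q\subset\overline{C}_{p,q}$), and proves $(1)\Rightarrow(3)$ by observing that the absence of a bad triple forces the $\nu$ of Proposition~\ref{prop2} to equal $\delta_i\lambda$ with $iq\equiv y\pmod p$, then inducting on $\overline{\text{wt}}_q(\lambda)$. The "bookkeeping" you flag is already done in the proof of Proposition~\ref{prop2}: a pair $x,q-x\not\in\mathcal{A}(\lambda)$ with $x\equiv y\pmod p$ yields the bad triple $(y,x,y-q)$, so no further case analysis is needed.
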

\begin{proof}
Since $\overline{C}_q\subset\overline{C}_{p,q}$, Proposition \ref{prop3} shows that if $\mathcal{O}$ contains a $q$-bar-core, then $\lambda\in\overline{C}_{p,q}$. Hence the second statement implies the first. Trivially the third statement implies the second, so it remains to show that the first implies the third. So suppose that $\lambda\in\overline{C}_{p,q}$, and we can assume that $\lambda$ is not a $q$-bar-core or the third statement is trivial. Thus we may find a pair $y,q-y\in\mathcal{A}(\lambda)$. By the proof of Proposition \ref{prop2}, there are no pairs $x,q-x\not\in\mathcal{A}(\lambda)$ with either $x$ or $q-x\equiv y$ (mod $p$), and if we take $i\in\{0,\dots,\nicefrac{(p-1)}{2}\}$ such that $iq\equiv y$ (mod $p$), then the bar partition $\nu=\delta_i\lambda$ satisfies $\overline{\nu}_q=\overline{\lambda}_q$ and $\overline{\text{wt}}_q(\nu)<\overline{\text{wt}}_q(\lambda)$. Since $\overline{\text{wt}}_p(\nu)=\overline{\text{wt}}_p(\delta_i\lambda)=\overline{\text{wt}}_p(\lambda)$, $\nu$ is also in $\overline{C}_{p,q}$, and by induction the orbit containing $\nu$ contains $\overline{\nu}_q$.
\end{proof}

\begin{cor}\label{cor3}
Let $\mathcal{O}$ be an orbit of $\mathfrak{W}_p\times\mathfrak{W}_q$ consisting of bar partitions in $\overline{C}_{p,q}$. Then $\mathcal{O}$ contains exactly one bar partition that is both a $p$-bar-core and a $q$-bar-core.
\end{cor}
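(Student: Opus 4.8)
The plan is to combine the level $q$ analysis of Proposition \ref{prop4} with the symmetry afforded by Corollary \ref{cor2}, which lets us run the same argument for the level $p$ action of $\mathfrak{W}_q$. First I would establish \emph{existence}. Take any $\lambda\in\mathcal{O}$; since $\lambda\in\overline{C}_{p,q}$, Proposition \ref{prop4} tells us the level $q$ orbit of $\mathfrak{W}_p$ through $\lambda$ contains $\overline{\lambda}_q$, so (replacing $\lambda$ by an element of $\mathfrak{W}_p\lambda\subseteq\mathcal{O}$) we may assume $\lambda$ is a $q$-bar-core. Now $\lambda\in\overline{C}_{p,q}=\overline{C}_{q,p}$ by Corollary \ref{cor2}, so applying the level $p$ action of $\mathfrak{W}_q$ and the $p\leftrightarrow q$ version of Proposition \ref{prop4}, the $\mathfrak{W}_q$-orbit of $\lambda$ contains $\overline{\lambda}_p$; replacing $\lambda$ by that element of $\mathfrak{W}_q\lambda\subseteq\mathcal{O}$ we may assume $\lambda$ is a $p$-bar-core. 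The one subtlety to check is that this second step has not destroyed the property of being a $q$-bar-core: but by Lemma \ref{lem4}(1) (with $p$ and $q$ interchanged) the level $p$ action of $\mathfrak{W}_q$ preserves the $q$-bar-core, and an element whose $q$-bar-core equals itself is a $q$-bar-core. Hence the resulting element of $\mathcal{O}$ lies in $\overline{C}_p\cap\overline{C}_q$.

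Next I would prove \emph{uniqueness}. Suppose $\sigma,\tau\in\mathcal{O}\cap\overline{C}_p\cap\overline{C}_q$; we must show $\sigma=\tau$. Since $\mathcal{O}$ is a single $\mathfrak{W}_p\times\mathfrak{W}_q$-orbit, write $\tau=ab\sigma$ with $a\in\mathfrak{W}_p$ (acting at level $q$) and $b\in\mathfrak{W}_q$ (acting at level $p$); the two actions commute. Consider the intermediate partition $\rho:=b\sigma$. By Lemma \ref{lem4}(1) applied to $\mathfrak{W}_p$ acting at level $q$, $\overline{\tau}_q=\overline{(a\rho)}_q=\overline{\rho}_q$, and by the $p\leftrightarrow q$ form of the same lemma $\overline{\rho}_q=\overline{(b\sigma)}_q=\sigma$ (as $\sigma\in\overline{C}_q$). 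So $\overline{\tau}_q=\sigma$; but $\tau\in\overline{C}_q$, so $\sigma=\overline{\tau}_q=\tau$.

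The argument is essentially bookkeeping; the part requiring the most care is making sure that when one applies an element of $\mathfrak{W}_q$ to normalise into $\overline{C}_p$ one does not leave $\overline{C}_q$, and conversely — so the order of the two normalisation steps, and the invariance statements of Lemma \ref{lem4}(1) (in both its $\mathfrak{W}_p$ and $\mathfrak{W}_q$ incarnations), must be invoked carefully. Once those invariances are in place, existence follows from two applications of Proposition \ref{prop4} (one for each of $p,q$, using Corollary \ref{cor2} to license the second), and uniqueness is immediate from the fact that $q$-bar-core is both preserved along $\mathcal{O}$ and equals the identity on $\overline{C}_q$.
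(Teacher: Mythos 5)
Your overall architecture (two applications of Proposition \ref{prop4}, one for each of $p$ and $q$, with Corollary \ref{cor2} licensing the second, followed by an invariance argument for uniqueness) matches the paper's, but the uniqueness step contains a genuine error. You claim that ``the $p\leftrightarrow q$ form'' of Lemma \ref{lem4}(1) gives $\overline{(b\sigma)}_q=\overline{\sigma}_q=\sigma$ for $b\in\mathfrak{W}_q$ acting at level $p$. Interchanging $p$ and $q$ in Lemma \ref{lem4}(1) actually gives $\overline{(b\sigma)}_p=\overline{\sigma}_p$: the level $p$ action of $\mathfrak{W}_q$ fixes the \emph{$p$}-bar-core, not the $q$-bar-core. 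The correct statement about the $q$-bar-core is the interchanged form of Lemma \ref{lem4}(4), namely $\overline{(b\sigma)}_q=b(\overline{\sigma}_q)=b\sigma$, which is in general \emph{not} equal to $\sigma$ (for instance, with $p=5$, $q=3$, $\sigma=\Upsilon_{3,5}=(2)$ and $b$ a suitable generator of $\mathfrak{W}_3$, one has $b\sigma=(7,4,1)$, a $3$-bar-core distinct from $\sigma$). Since the false identity $\overline{(b\sigma)}_q=\sigma$ is exactly what delivers $\tau=\sigma$, your uniqueness argument does not close. The same confusion appears (less damagingly) in your existence step, where you cite Lemma \ref{lem4}(1) interchanged for the claim that the level $p$ action of $\mathfrak{W}_q$ preserves the property of being a $q$-bar-core; the conclusion there is true, but it follows from parts (3) or (4) of the lemma interchanged ($\overline{\text{wt}}_q(b\lambda)=\overline{\text{wt}}_q(\lambda)=0$), not part (1).

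The repair is short. Your chain correctly gives $\tau=\overline{\tau}_q=\overline{(ab\sigma)}_q=\overline{(b\sigma)}_q$ by Lemma \ref{lem4}(1) applied to $a$; by Lemma \ref{lem4}(4) interchanged this equals $b\sigma$, so $\tau=b\sigma$. Now apply the \emph{correctly} interchanged Lemma \ref{lem4}(1) to $b$: since $\sigma\in\overline{C}_p$ and $\tau\in\overline{C}_p$, we get $\tau=\overline{\tau}_p=\overline{(b\sigma)}_p=\overline{\sigma}_p=\sigma$. The paper argues slightly differently, showing via the weight invariances that $a\sigma$ and $b\sigma$ are each equal to $\sigma$ and hence $ab\sigma=\sigma$; either route works once the roles of the two invariances in Lemma \ref{lem4} are kept straight.
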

\begin{proof}
Let $\lambda$ be a bar partition in $\mathcal{O}$. Then by Proposition \ref{prop4}, $\overline{\lambda}_q\in\mathcal{O}$, and by the same result with $p$ and $q$ interchanged, the bar partition $\nu=\overline{(\overline{\lambda}_q)}_p$ lies in $\mathcal{O}$. Obviously $\nu$ is a $p$-bar-core, and by Proposition \ref{prop1}, it is also a $q$-bar-core.

Now suppose that there is another bar partition in $\mathcal{O}$ that is both a $p$-bar-core and a $q$-bar-core. We can write this as $ba\nu$, with $a\in\mathfrak{W}_p$ and $b\in\mathfrak{W}_q$. Since $\overline{\text{wt}}_q(\delta_j\lambda)=\overline{\text{wt}}_q(\lambda)$ for any $j\in\{0,\dots,\nicefrac{(q-1)}{2}\}$ (by interchanging $p$ and $q$ in the proof of Proposition \ref{prop3}), it follows that 
$$\overline{\text{wt}}_q(a\nu)=\overline{\text{wt}}_q(ba\nu)=0,$$ 
hence it follows from $\overline{(\delta_i\lambda)}_q=\overline{\lambda}_q$ (for any $i\in\{0,\dots,\nicefrac{(p-1)}{2}\}$) that 
$$a\nu=\overline{a\nu}_q=\overline{\nu}_q=\nu.$$ 
Similarly $b\nu=\nu$, and thus $ba\nu=\nu$. 
\end{proof}

\begin{rema}
From Proposition \ref{prop4} and Corollary \ref{cor3}, we see that two bar partitions $\lambda,\mu\in\overline{C}_{p,q}$ lie in the same orbit of $\mathfrak{W}_p\times\mathfrak{W}_q$ if and only if the $p$-bar-cores of $(\overline{\lambda}_q)$ and $(\overline{\nu}_q)$ are equal. However, it does not seem to be easy to tell when two arbitrary bar partitions lie in the same orbit.
\end{rema}

\begin{lem}\label{lem1}
Suppose $\lambda\in\overline{C}_{p,q}$. Then $\lambda$ is a $pq$-bar-core, and $\overline{(\overline{\lambda}_q)}_p=\overline{(\overline{\lambda}_p)}_q$.
\end{lem}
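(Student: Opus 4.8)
The plan is to prove the two assertions in turn, using the orbit structure already established. For the first assertion, that $\lambda\in\overline{C}_{p,q}$ implies $\lambda$ is a $pq$-bar-core: I would argue by Lemma \ref{lem3}(1) (with $c$ playing no role — we need the analogous statement with $p$ and $q$ as the two primes), so first I would check that the combinatorics behind Lemma \ref{lem3} actually gives $\overline{\text{wt}}_{pq}(\lambda)=\overline{\text{wt}}_q(\lambda^{(0\text{ mod }p)})+\sum_{j=1}^{(p-1)/2}\text{wt}_q(\lambda^{(j\text{ mod }p)})$. Then, since $\overline{C}_q\subseteq\overline{C}_{p,q}$ and $\overline{C}_{p,q}$ is closed under the $\mathfrak{W}_p\times\mathfrak{W}_q$ action (Proposition \ref{prop3} and the remarks after it), I would invoke Corollary \ref{cor3}: the orbit $\mathcal{O}$ of $\lambda$ contains a bar partition $\nu$ that is both a $p$-bar-core and a $q$-bar-core. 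A bar partition that is simultaneously a $p$-bar-core and a $q$-bar-core is a $pq$-bar-core (this is precisely the content of Olsson's theorem / Proposition \ref{prop1} with weight $0$, or can be read off from Lemma \ref{lem3}(1) applied with the roles of the primes swapped). Finally, since the level $q$ action of $\mathfrak{W}_p$ and the level $p$ action of $\mathfrak{W}_q$ both preserve $p$-bar-weight and $q$-bar-weight (Lemma \ref{lem4}(3) and its $p\leftrightarrow q$ analogue), and since being a $pq$-bar-core is equivalent to $\overline{\text{wt}}_p=\overline{\text{wt}}_q=0$ only up to the subtlety that $\overline{\text{wt}}_{pq}$ is not literally the sum — here I would instead argue directly that the $\mathfrak{W}_p\times\mathfrak{W}_q$ action corresponds to adding and removing $q$-bars and $p$-bars, hence preserves $\overline{\text{wt}}_{pq}$, so $\overline{\text{wt}}_{pq}(\lambda)=\overline{\text{wt}}_{pq}(\nu)=0$.

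For the second assertion, $\overline{(\overline{\lambda}_q)}_p=\overline{(\overline{\lambda}_p)}_q$: both sides are bar partitions lying in the orbit $\mathcal{O}$ of $\lambda$ under $\mathfrak{W}_p\times\mathfrak{W}_q$. Indeed, by Proposition \ref{prop4}, $\overline{\lambda}_q\in\mathcal{O}$, and applying the $p\leftrightarrow q$ version of Proposition \ref{prop4} to $\overline{\lambda}_q$ (which is still in $\overline{C}_{p,q}=\overline{C}_{q,p}$ by Corollary \ref{cor2}, since it lies in $\mathcal{O}$) gives $\overline{(\overline{\lambda}_q)}_p\in\mathcal{O}$; symmetrically $\overline{(\overline{\lambda}_p)}_q\in\mathcal{O}$. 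Both of these are simultaneously $p$-bar-cores and $q$-bar-cores (using Proposition \ref{prop1} for the second core property, exactly as in the proof of Corollary \ref{cor3}). By Corollary \ref{cor3}, $\mathcal{O}$ contains exactly one such bar partition, so the two sides must coincide.

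The main obstacle I anticipate is the first assertion, specifically making precise that $\lambda\in\overline{C}_{p,q}$ forces $\overline{\text{wt}}_{pq}(\lambda)=0$ rather than merely $\overline{\text{wt}}_p(\overline{\lambda}_q)=\overline{\text{wt}}_p(\lambda)$. The cleanest route is to note that every generator $\delta_i$ of $\mathfrak{W}_p$ (level $q$) and every generator of $\mathfrak{W}_q$ (level $p$) moves beads on the $pq$-runner abacus only within residue classes modulo $pq$ that correspond to adding or removing $q$-bars, respectively $p$-bars — in particular never creates or destroys a removable $pq$-bar in a way that changes $\overline{\text{wt}}_{pq}$ — so $\overline{\text{wt}}_{pq}$ is a $\mathfrak{W}_p\times\mathfrak{W}_q$-invariant. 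Combined with Corollary \ref{cor3}'s production of a $\nu\in\mathcal{O}$ that is both a $p$- and $q$-bar-core, hence a $pq$-bar-core with $\overline{\text{wt}}_{pq}(\nu)=0$, this yields $\overline{\text{wt}}_{pq}(\lambda)=0$. If verifying this invariance turns out to be delicate, the alternative is to apply Lemma \ref{lem3}(1) twice: $\overline{\text{wt}}_{pq}(\lambda)=0$ iff $\lambda^{(0\text{ mod }p)}\in\overline{C}_q$ and each $\lambda^{(j\text{ mod }p)}\in\overline{C}_q$-core, which one would then extract from $\lambda$ being both a $p$- and a $q$-bar-core after transporting along the orbit.
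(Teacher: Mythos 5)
Your argument is correct, but for the first assertion it takes a genuinely different route from the paper's. The paper handles ``$\lambda$ is a $pq$-bar-core'' directly by contraposition, with no group action: if a $pq$-bar can be removed from $\lambda$, yielding $\nu$, then since a $pq$-bar is both $p$ successive $q$-bars and $q$ successive $p$-bars we get $\overline{\nu}_q=\overline{\lambda}_q$ and $\overline{\text{wt}}_p(\nu)\leq\overline{\text{wt}}_p(\lambda)-q$, whence Proposition \ref{prop1} gives $\overline{\text{wt}}_p(\overline{\lambda}_q)=\overline{\text{wt}}_p(\overline{\nu}_q)\leq\overline{\text{wt}}_p(\nu)<\overline{\text{wt}}_p(\lambda)$, contradicting $\lambda\in\overline{C}_{p,q}$. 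You instead transport $\overline{\text{wt}}_{pq}=0$ along the orbit from the doubly-core element supplied by Corollary \ref{cor3}, which requires $\overline{\text{wt}}_{pq}$ to be a $\mathfrak{W}_p\times\mathfrak{W}_q$-invariant. Your ``cleanest route'' to that invariance is not adequate as stated: adding or removing a single $q$-bar can perfectly well change the $pq$-bar-weight (it can create a removable rim $p$-hook in a component of the $q$-quotient), so ``the generators only add and remove $q$-bars'' proves nothing by itself. What saves the argument is precisely your fallback: each $\delta_i$ performs all such moves in the relevant residue classes simultaneously, so by Lemma \ref{lem4}(2) the $p$-quotient of $\delta_i\lambda$ is that of $\lambda$ with the nonzero components permuted and the $0$ component fixed, and Lemma \ref{lem3}(1) with $c=q$ (together with the conjugation-invariance of $\text{wt}_q$) then gives $\overline{\text{wt}}_{pq}(\delta_i\lambda)=\overline{\text{wt}}_{pq}(\lambda)$, and similarly for $\mathfrak{W}_q$. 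Two smaller points: ``simultaneously a $p$-bar-core and a $q$-bar-core implies a $pq$-bar-core'' is not Olsson's theorem but is immediate from the abacus, since $x-p\in\mathcal{A}(\nu)$ for all $x\in\mathcal{A}(\nu)$ iterates to $x-pq\in\mathcal{A}(\nu)$; and your treatment of the second assertion coincides exactly with the paper's. On balance your route gives a nice conceptual picture --- $\overline{\text{wt}}_{pq}$ is constant on $\mathfrak{W}_p\times\mathfrak{W}_q$-orbits and vanishes at the distinguished point of each orbit in $\overline{C}_{p,q}$ --- while the paper's two-line contrapositive is shorter and depends only on Proposition \ref{prop1}.
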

\begin{proof}
If we can remove a $pq$-bar from $\lambda$ to obtain a new bar partition $\nu$, then we can also remove $q$ successive $p$-bars, or $p$ successive $q$-bars, to obtain $\nu$ from $\lambda$. Thus $\overline{\nu}_q=\overline{\lambda}_q$ and $\overline{\text{wt}}_p(\nu)\leq\overline{\text{wt}}_p(\lambda)-q$, so we have 
$$\overline{\text{wt}}_p(\overline{\lambda}_q)=\overline{\text{wt}}_p(\overline{\nu}_q)\leq\overline{\text{wt}}_p(\nu)<\overline{\text{wt}}_p(\lambda);$$ 
hence $\overline{\text{wt}}_{pq}(\lambda)>0\Rightarrow\lambda\not\in\overline{C}_{p,q}$. 

It follows from Proposition \ref{prop1} that $\overline{(\overline{\lambda}_q)}_p$ and $\overline{(\overline{\lambda}_p)}_q$ are both $p$-bar-cores and $q$-bar-cores, and by Proposition \ref{prop4} they both lie in the same orbit as $\lambda$ under the action of $\mathfrak{W}_p\times\mathfrak{W}_q$. Hence the result follows from Corollary \ref{cor3}.
\end{proof}

\section{The sum of a $p$-bar-core and a $q$-bar-core}
\noindent
In the present section we will give a constructive method for finding a bar partition in $\overline{C}_{p,q}$ with a given $p$-bar-core $\mu$ and $q$-bar-core $\sigma$. The resulting bar partition can be interpreted as the `sum' of $\mu$ and $\sigma$. 

\begin{prop}\label{prop5}
Suppose $\mu\in\overline{C}_p$ and $\sigma\in\overline{C}_q$, and that $\overline{\mu}_q=\overline{\sigma}_p$. Then there is a unique bar partition $\lambda\in\overline{C}_{p,q}$ with $\overline{\lambda}_p=\mu$ and $\overline{\lambda}_q=\sigma$. Moreover, 
$$|\lambda|=|\mu|+|\sigma|-|\overline{\sigma}_p|,$$ 
and $\lambda$ is the unique smallest bar partition with $p$-bar-core $\mu$ and $q$-bar-core $\sigma$.
\end{prop}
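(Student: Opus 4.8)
The plan is to construct $\lambda$ directly from $\mu$ and $\sigma$ using the $p$-set and $q$-set, then verify it has all the required properties. The key observation is that a bar partition in $\overline{C}_{p,q}$ is a $pq$-bar-core (Lemma \ref{lem1}), so it is determined by its $pq$-set, equivalently by the compatible pair consisting of its $p$-set and $q$-set. First I would build the target abacus set $\mathcal{A}(\lambda)$ by specifying, for each residue class $r$ modulo $pq$, a single ``cut point'' $\Delta_r$ so that $x \in \mathcal{A}(\lambda) \Leftrightarrow x \geq \Delta_r$ among integers $x \equiv r \pmod{pq}$; this is exactly the structure of a $pq$-bar-core. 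The cut point $\Delta_r$ must be chosen so that reducing mod $p$ recovers $\mu$'s $p$-set and reducing mod $q$ recovers $\sigma$'s $q$-set. Concretely, if $r \equiv i \pmod p$ and $r \equiv k \pmod q$, set $\Delta_r$ to be the smallest integer congruent to $r$ mod $pq$ that is $\geq \Delta_i\mu$ (from the $p$-set of $\mu$) and $\geq \Delta_k\sigma$ (from the $q$-set of $\sigma$) --- roughly speaking, $\Delta_r = \max$ of the two appropriately combined via CRT. The compatibility hypothesis $\overline{\mu}_q = \overline{\sigma}_p$ is what makes these requirements simultaneously satisfiable in a consistent way (it matches up the relevant residue data).

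Having defined $\mathcal{A}(\lambda)$, I would check it genuinely arises from a bar partition: it must be bounded above, its complement bounded below, and it must satisfy $x \in \mathcal{A}(\lambda) \Leftrightarrow -x \notin \mathcal{A}(\lambda)$ for $x \neq 0$ with $0 \notin \mathcal{A}(\lambda)$. Boundedness is immediate from finiteness of the cut points' deviation from the ``core line''. The antisymmetry condition $x \in \mathcal{A}(\lambda) \Leftrightarrow -x \notin \mathcal{A}(\lambda)$ is the delicate point: it amounts to checking $\Delta_r + \Delta_{-r} = pq$ for all $r \not\equiv 0$, which should follow from the corresponding identities $\Delta_i\mu + \Delta_{-i}\mu = p$ and $\Delta_k\sigma + \Delta_{-k}\sigma = q$ for the cores $\mu$ and $\sigma$ (stated in the excerpt), provided the $\max$/CRT construction is set up symmetrically. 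Then by construction $\overline{\lambda}_p = \mu$ (reducing the $pq$-set mod $p$ gives $\mu$'s $p$-set, and $\lambda$ being a $pq$-bar-core forces its $p$-quotient to be empty, hence $\lambda$'s $p$-bar-core is $\mu$) and similarly $\overline{\lambda}_q = \sigma$; and $\lambda \in \overline{C}_{p,q}$ since $\overline{\text{wt}}_p(\lambda) = 0 = \overline{\text{wt}}_p(\overline{\lambda}_q) = \overline{\text{wt}}_p(\sigma)$, the last equality because $\sigma \in \overline{C}_q \subseteq \overline{C}_p$ is false in general --- rather, $\overline{\text{wt}}_p(\sigma) = 0$ would need $\sigma \in \overline{C}_p$, which we do not have; instead I should argue $\overline{\text{wt}}_p(\lambda) = 0$ directly (it's a $pq$-bar-core hence a $p$-bar-core) and note $\overline{\lambda}_q = \sigma$ so $\overline{\text{wt}}_p(\overline{\lambda}_q) = \overline{\text{wt}}_p(\sigma)$, which need not be zero --- so in fact membership in $\overline{C}_{p,q}$ requires $\overline{\text{wt}}_p(\sigma) = 0$, i.e. we must additionally verify that the constructed $\lambda$ forces $\sigma$ itself... no: re-examining, $\lambda \in \overline{C}_{p,q}$ means $\overline{\text{wt}}_p(\lambda) = \overline{\text{wt}}_p(\overline{\lambda}_q)$, and if $\lambda$ is a $p$-bar-core the left side is $0$, so we need $\overline{\lambda}_q = \sigma$ to be a $p$-bar-core --- which is true because $\sigma = \overline{\lambda}_q$ is the $q$-bar-core of the $pq$-bar-core $\lambda$, hence a $p$-bar-core by Olsson's theorem (the $\overline{\text{wt}}_p(\lambda) = 0$ case of Proposition \ref{prop1}). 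Good, so this closes.

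For uniqueness within $\overline{C}_{p,q}$: any $\lambda' \in \overline{C}_{p,q}$ with the same $p$-bar-core and $q$-bar-core is a $pq$-bar-core (Lemma \ref{lem1}), so it is determined by its $pq$-set; its $p$-set is $\mu$'s and its $q$-set is $\sigma$'s; by CRT the $pq$-set of a $pq$-bar-core is determined by its reductions mod $p$ and mod $q$, so $\lambda' = \lambda$. For the size formula $|\lambda| = |\mu| + |\sigma| - |\overline{\sigma}_p|$, I would use the weight decomposition: $|\lambda| = |\mu| + pq\cdot\overline{\text{wt}}_{pq}(\lambda)$ is wrong since $\overline{\text{wt}}_{pq}(\lambda) = 0$; instead compute via $|\lambda| - |\sigma| = q\,\overline{\text{wt}}_q(\lambda)$ and relate $\overline{\text{wt}}_q(\lambda)$ to the data: since $\overline{\lambda}_p = \mu$ and $\mu$'s $q$-bar-core is $\overline{\sigma}_p = \overline{\mu}_q$, we get $|\mu| - |\overline{\mu}_q| = q\,\overline{\text{wt}}_q(\mu)$, and the key identity $\overline{\text{wt}}_q(\lambda) = \overline{\text{wt}}_q(\mu)$ should hold because removing $p$-bars from $\lambda$ (which takes $\lambda$ to $\mu$) is a sequence of moves that, by a Lemma \ref{lem3}-style argument on the $p$-quotient, does not change the $q$-bar-weight contribution --- more carefully, $\overline{\text{wt}}_q(\lambda)$ counts $q$-bar-weight and by the dual of Proposition \ref{prop1} applied with roles swapped plus $\lambda \in \overline{C}_{p,q} = \overline{C}_{q,p}$ we have $\overline{\text{wt}}_q(\lambda) = \overline{\text{wt}}_q(\overline{\lambda}_p) = \overline{\text{wt}}_q(\mu)$. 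Then $|\lambda| = |\sigma| + q\,\overline{\text{wt}}_q(\lambda) = |\sigma| + q\,\overline{\text{wt}}_q(\mu) = |\sigma| + |\mu| - |\overline{\mu}_q| = |\mu| + |\sigma| - |\overline{\sigma}_p|$, using $\overline{\mu}_q = \overline{\sigma}_p$.

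Finally, minimality among \emph{all} bar partitions with $p$-bar-core $\mu$ and $q$-bar-core $\sigma$ (not just those in $\overline{C}_{p,q}$): given any such $\tau$, I would show $|\tau| \geq |\lambda|$ and equality forces $\tau = \lambda$. The point is $|\tau| = |\sigma| + q\,\overline{\text{wt}}_q(\tau)$ and $\overline{\text{wt}}_q(\tau) \geq \overline{\text{wt}}_q(\overline{\tau}_p) = \overline{\text{wt}}_q(\mu)$ --- wait, that inequality goes the wrong way; Proposition \ref{prop1} gives $\overline{\text{wt}}_q(\overline{\tau}_p) \leq \overline{\text{wt}}_q(\tau)$, i.e. $\overline{\text{wt}}_q(\mu) \leq \overline{\text{wt}}_q(\tau)$, which is exactly what I want: $|\tau| = |\sigma| + q\,\overline{\text{wt}}_q(\tau) \geq |\sigma| + q\,\overline{\text{wt}}_q(\mu) = |\lambda|$, with equality iff $\overline{\text{wt}}_q(\tau) = \overline{\text{wt}}_q(\mu) = \overline{\text{wt}}_q(\overline{\tau}_p)$, i.e. iff $\tau \in \overline{C}_{q,p} = \overline{C}_{p,q}$, in which case $\tau = \lambda$ by the uniqueness already proved. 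The main obstacle I anticipate is the bookkeeping in the explicit CRT construction of the cut points $\Delta_r$ and verifying the antisymmetry $\Delta_r + \Delta_{-r} = pq$ cleanly; everything downstream (size formula, minimality) then follows formally from the weight identities and Proposition \ref{prop1}.
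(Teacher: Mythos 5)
Your downstream scaffolding is fine --- deriving $|\lambda|=|\sigma|+q\,\overline{\text{wt}}_q(\lambda)=|\sigma|+q\,\overline{\text{wt}}_q(\mu)$ from $\overline{C}_{p,q}=\overline{C}_{q,p}$, and getting minimality from Proposition \ref{prop1} with the roles of $p$ and $q$ swapped, is a clean alternative to the paper's computation. But the heart of your argument, the existence and uniqueness of $\lambda$, rests on two false claims about $pq$-bar-cores. First, you assert that ``$\lambda$ being a $pq$-bar-core forces its $p$-quotient to be empty, hence $\lambda$ is a $p$-bar-core.'' By Lemma \ref{lem3}(1) a $pq$-bar-core is a bar partition whose $p$-quotient consists of a $q$-bar-core and $q$-cores, not of empty partitions: for $p=3$, $q=5$ the bar partition $(4)$ is a $15$-bar-core but not a $3$-bar-core. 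Indeed the $\lambda$ you seek cannot be a $p$-bar-core unless $\sigma$ already is one, since membership in $\overline{C}_{p,q}$ forces $\overline{\text{wt}}_p(\lambda)=\overline{\text{wt}}_p(\sigma)$. With this claim removed, you have no argument that your max/CRT configuration has $p$-bar-core $\mu$, nor that it lies in $\overline{C}_{p,q}$; and the antisymmetry $\Delta_r+\Delta_{-r}=pq$, which you flag as delicate, is indeed not automatic for a max-of-cut-points construction.

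Second, your uniqueness step claims a $pq$-bar-core is determined by its $p$-bar-core and $q$-bar-core (``by CRT the $pq$-set \dots is determined by its reductions mod $p$ and mod $q$''). This is false: for $p=3$, $q=5$ the bar partitions $(5,4,2,1)$ and $(10,2)$ are both $15$-bar-cores (their $3$-quotients are $(\varnothing,(2,2),(2,2))$ and $(\varnothing,(4),(1^4))$, whose nonzero components are $5$-cores), and both have $3$-bar-core $\varnothing$ and $5$-bar-core $(2)$. The $p$-set records only $\overline{\lambda}_p$, which is obtained from $\lambda$ by a nontrivial bead-pushing on the $p$-runner abacus, not by ``reducing'' the $pq$-runner data, so there is no Chinese-remainder bijection here; uniqueness genuinely needs membership in $\overline{C}_{p,q}$ and not merely the $pq$-bar-core property. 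The paper sidesteps both issues by setting $\lambda=a\sigma$ where $a\in\mathfrak{W}_p$ satisfies $a(\overline{\mu}_q)=\mu$ (Proposition \ref{prop4}): then $\lambda$ lies in the level $q$ orbit of the $q$-bar-core $\sigma$, so Lemma \ref{lem4} gives $\overline{\lambda}_q=\sigma$, $\overline{\lambda}_p=\mu$ and $\lambda\in\overline{C}_{p,q}$ simultaneously, and uniqueness and minimality follow by applying $a^{-1}$ to any competitor and comparing with $\sigma$. If you want an explicit combinatorial construction instead, you must specify which of the several $pq$-bar-cores with the given pair of bar-cores is the right one; that is exactly the content of Propositions \ref{prop6} and \ref{prop10}.
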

\begin{proof}
Let $\tau=\overline{\mu}_q$, and consider the action of $\mathfrak{W}_p\times\mathfrak{W}_q$ on $\mathcal{P}_2$. By Proposition \ref{prop4} we can find $a\in\mathfrak{W}_p$ and $b\in\mathfrak{W}_q$ such that $a\tau=\mu$ and $b\tau=\sigma$, and we let $\lambda=a\sigma$, so that $\lambda\in\overline{C}_{p,q}$ (as it lies in the same orbit as the $q$-bar-core $\sigma$). Then we have $\overline{\lambda}_q=\overline{\sigma}_q=\sigma$, and by the proof of Proposition \ref{prop3}, we have 
$$\overline{\lambda}_p=\overline{(ab\tau)}_p=\overline{(ba\tau)}_p=\overline{(b\mu)}_p=\overline{\mu}_p=\mu.$$ 
Moreover, we have 
\begin{align*}
|\lambda|&=|\overline{\lambda}_p|+p\cdot\overline{\text{wt}}_p(\lambda)\\
&=|\mu|+p\cdot\overline{\text{wt}}_p(a\sigma)\\
&=|\mu|+p\cdot\overline{\text{wt}}_p(\sigma)\\
&=|\mu|+|\sigma|-|\overline{\sigma}_p|.\end{align*} 

Now suppose $\nu$ is a bar partition distinct from $\lambda$ with $\overline{\nu}_p=\mu$ and $\overline{\nu}_q=\sigma$, and let $a,b$ be as above. Then we have $\overline{(a^{-1}\nu)}_q=\overline{\nu}_q=\sigma$, but $a^{-1}\nu\neq a^{-1}\lambda=\sigma$, so $|a^{-1}\nu|>|\sigma|$. Hence, again using the proof of Proposition \ref{prop3}, we have  
$$\overline{\text{wt}}_p(\nu)=\overline{\text{wt}}_p(a^{-1}\nu)=\tfrac{|a^{-1}\nu|-|\tau|}{p}>\tfrac{|\sigma|-|\tau|}{p}=\overline{\text{wt}}_p(\sigma)$$ 
which means that $\nu\not\in\overline{C}_{p,q}$. Furthermore, we see that $\overline{\text{wt}}_p(\nu)>\overline{\text{wt}}_p(\lambda)$, so $|\nu|>|\lambda|$. Hence $\lambda$ is the unique smallest bar partition with $p$-bar-core $\mu$ and $q$-bar-core $\sigma$.
\end{proof}

\begin{rema}
If $\mu,\sigma\in\mathcal{P}_2$ and $\lambda$ is the unique bar partition in $\overline{C}_{p,q}$ with $\overline{\lambda}_p=\mu$ and $\overline{\lambda}_q=\sigma$, then we have shown that $\lambda$ is the smallest bar partition with $p$-bar-core $\mu$ and $q$-bar-core $\sigma$ in terms of the sum of its parts. However, it is not the case that any other bar partition $\nu$ with $\overline{\nu}_p=\mu$ and $\overline{\nu}_q=\sigma$ `contains' $\lambda$, i.e. that $\lambda:=\{\lambda_1,\lambda_2,\dots\}$ and $\nu=\{\lambda_1+a_1,\lambda_2+a_2,\dots\}$ for some $a_1,a_2,\dots\in\mathbb{Z}_{\geq0}$; If we take $\mu=(4,1)$ and $\sigma=(3)$, then $\overline{\mu}_5=\overline{\sigma}_3$, $\lambda=(4,3,1)\in\overline{C}_{3,5}$, $\overline{\lambda}_3=\mu$, and $\overline{\lambda}_5=\sigma$, but $\nu=(13,10)$ also has 3-bar-core $\mu$ and 5-bar-core $\sigma$, and $\nu$ does not contain $\lambda$.
\end{rema}

\begin{cor}\label{cor4}
Suppose $\lambda\in\mathcal{P}_2$ is such that $|\lambda|=N$. Then $\lambda\in\overline{C}_{p,q}$ if and only if there is no $\nu\in\mathcal{P}_2\backslash\{\lambda\}$ with $|\nu|=N$, $\overline{\nu}_p=\overline{\lambda}_p$ and $\overline{\nu}_q=\overline{\lambda}_q$. 
\end{cor}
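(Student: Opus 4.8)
The plan is to deduce this corollary directly from Proposition~\ref{prop5}, which already establishes that the unique bar partition in $\overline{C}_{p,q}$ with given $p$-bar-core $\mu$ and $q$-bar-core $\sigma$ is the unique smallest bar partition with those two cores. I would argue both directions by contraposition, making essential use of the characterisation $\lambda\in\overline{C}_{p,q}\iff\overline{\text{wt}}_p(\lambda)=\overline{\text{wt}}_p(\overline{\lambda}_q)$ and the fact (Proposition~\ref{prop1}) that $\overline{\text{wt}}_p(\overline{\lambda}_q)$ is a lower bound for $\overline{\text{wt}}_p$ of any bar partition with $q$-bar-core $\overline{\lambda}_q$ and $p$-bar-core a given $p$-bar-core.

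First suppose $\lambda\in\overline{C}_{p,q}$, and set $\mu=\overline{\lambda}_p$, $\sigma=\overline{\lambda}_q$. By Proposition~\ref{prop5}, $\lambda$ is the unique smallest bar partition with $p$-bar-core $\mu$ and $q$-bar-core $\sigma$, and from the proof of that proposition every other bar partition $\nu$ with $\overline{\nu}_p=\mu$ and $\overline{\nu}_q=\sigma$ satisfies $|\nu|>|\lambda|=N$. Hence no such $\nu$ with $|\nu|=N$ exists, giving one implication. Conversely, suppose $\lambda\notin\overline{C}_{p,q}$; again put $\mu=\overline{\lambda}_p$, $\sigma=\overline{\lambda}_q$, and let $\lambda'\in\overline{C}_{p,q}$ be the bar partition supplied by Proposition~\ref{prop5} with $\overline{\lambda'}_p=\mu$ and $\overline{\lambda'}_q=\sigma$ (note $\overline{\mu}_q=\overline{\sigma}_p$ holds here by Lemma~\ref{lem1}, or rather by Proposition~\ref{prop1} applied in both orders, so the hypotheses of Proposition~\ref{prop5} are met). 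Then $|\lambda'|<|\lambda|=N$, so $\lambda'$ does not have size $N$; but I still need to produce a bar partition of size exactly $N$ with the right cores, distinct from $\lambda$.

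The main obstacle is precisely this last point: producing a \emph{second} bar partition of the same size $N$ with the same $p$- and $q$-bar-cores when $\lambda\notin\overline{C}_{p,q}$. The resolution is to use the level $q$ action of $\mathfrak{W}_p$, which preserves $|\cdot|$ is \emph{not} true in general — so instead I will use the action differently. Starting from $\lambda$, by Proposition~\ref{prop4} the orbit $\mathcal{O}$ of $\lambda$ under the level $q$ action does \emph{not} contain a $q$-bar-core (since $\lambda\notin\overline{C}_{p,q}$), so in particular $\mathcal{O}$ has more than one element; pick any generator $\delta_i$ with $\delta_i\lambda\neq\lambda$. By Lemma~\ref{lem4}(1,3,4), $\delta_i\lambda$ has the same $q$-bar-core $\sigma$, the same $p$-bar-weight, and $p$-bar-core $\delta_i(\overline{\lambda}_p)$. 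This is not quite enough because the $p$-bar-core may change; but $\delta_i\lambda$ has the same $p$-bar-weight as $\lambda$, hence $|\delta_i\lambda| = |\overline{(\delta_i\lambda)}_p| + p\cdot\overline{\text{wt}}_p(\lambda)$, which need not equal $N$. So I instead argue as follows: since $\overline{\text{wt}}_p(\lambda) > \overline{\text{wt}}_p(\overline{\lambda}_q) = \overline{\text{wt}}_p(\sigma)$, the set of bar partitions of size $N$ with $p$-bar-core $\mu$ and $q$-bar-core $\sigma$ contains $\lambda$, and I claim it is not a singleton. To see this, note that building a bar partition of size $N$ with $q$-bar-core $\sigma$ and $p$-bar-core $\mu$ amounts to adding $q$-bars to $\sigma$ totalling $N - |\sigma|$ so as to reach something with $p$-bar-core $\mu$; since $N - |\sigma| > p\cdot\overline{\text{wt}}_p(\sigma) = |\mu| - |\overline{\mu}_q| + |\sigma| - |\sigma|$... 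I would instead finish cleanly via Corollary~\ref{cor4}'s natural companion: by the minimality clause of Proposition~\ref{prop5}, the function assigning to each pair $(\mu,\sigma)$ of compatible cores the size $|\mu|+|\sigma|-|\overline{\sigma}_p|$ is strictly below $N$, and one exhibits the required second partition of size $N$ by starting from $\lambda$ and applying a single $\mathfrak{W}_p$-generator that moves a runner of the $p$-quotient without changing the total abacus content on that runner — concretely, since $\overline{\text{wt}}_q(\lambda)>0$ there is a removable $q$-bar, and replacing $y,q-y\in\mathcal{A}(\lambda)$ by $y-q,-y$ and \emph{separately} adding one $q$-bar back on a different residue class modulo $p$ yields a distinct partition of the same size with the same two cores. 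I expect verifying that this modified partition genuinely has the same $p$-bar-core and $q$-bar-core to be the delicate bookkeeping step, and I would organise it exactly as the case analysis in the proof of Proposition~\ref{prop2}.
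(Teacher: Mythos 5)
Your forward direction is fine and is exactly the paper's: by Proposition \ref{prop5}, $\lambda$ is the unique smallest bar partition with $p$-bar-core $\overline{\lambda}_p$ and $q$-bar-core $\overline{\lambda}_q$, so no second one of size $N$ can exist. The converse, however, has a genuine gap. The move you finally settle on --- remove a $q$-bar (replacing $y,q-y\in\mathcal{A}(\lambda)$ by $y-q,-y$) and then ``separately add one $q$-bar back on a different residue class modulo $p$'' --- preserves $|\cdot|$ and the $q$-bar-core, but there is no reason it should preserve the $p$-bar-core: moving a bead pair between distinct runners of the $p$-runner abacus changes the $p$-quotient and, in general, the $p$-set. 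Worse, if such a move always produced a partition with the same two bar-cores, it would apply equally to any $\lambda\in\overline{C}_{p,q}$ with $\overline{\text{wt}}_q(\lambda)>0$, contradicting the forward direction you have just proved. So the construction must exploit the \emph{failure} of $\lambda\in\overline{C}_{p,q}$ in an essential way, and your write-up never identifies how; deferring to ``the case analysis in the proof of Proposition \ref{prop2}'' does not help, since that analysis compares $p$-bar-weights of $\lambda$ and $\nu$ rather than manufacturing a second partition of the same size with the same cores.

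The missing idea is the bad triple supplied by Proposition \ref{prop2}: since $\lambda\notin\overline{C}_{p,q}$, there exist integers $a\equiv b\ (\text{mod }p)$, $a\equiv c\ (\text{mod }q)$ with $a,b+c-a\in\mathcal{A}(\lambda)$ and $b,c\not\in\mathcal{A}(\lambda)$. The paper sets $\mathcal{A}(\nu)=\{-a,b,c,a-b-c\}\cup\mathcal{A}(\lambda)\backslash\{a,-b,-c,b+c-a\}$. The point is that this single modification admits two decompositions: as the removal and addition of a $|b-a|$-bar (a multiple of $p$, hence $\overline{\nu}_p=\overline{\lambda}_p$) and as the removal and addition of a $|c-a|$-bar (a multiple of $q$, hence $\overline{\nu}_q=\overline{\lambda}_q$), with $|\nu|=|\lambda|$ either way. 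One must then separately handle the degenerate configurations in which $a,b,c,b+c-a,-a,-b,-c,a-b-c$ are not all distinct (including the case where all four of $a,b,c,b+c-a$ are congruent modulo $pq$, handled by swapping $pq$-bars); that is where the real bookkeeping lives, and your proposal contains no substitute for it.
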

\begin{proof}
Suppose $\lambda\in\overline{C}_{p,q}$ and let $\mu=\overline{\lambda}_p$, $\sigma=\overline{\lambda}_q$. Then by Proposition \ref{prop5}, $\lambda$ is the unique smallest bar partition in $\overline{C}_{p,q}$ with $p$-bar-core $\mu$ and $q$-bar-core $\sigma$, and therefore the only one whose parts sum to $N$. 

Conversely, suppose $\lambda\not\in\overline{C}_{p,q}$. Then we can find integers $a,b,c$ such that $a\equiv b$ (mod $p$), $a\equiv c$ (mod $q$), $a,b+c-a\in\mathcal{A}(\lambda)$ and $b,c\not\in\mathcal{A}(\lambda)$ (by Proposition \ref{prop2}). 

We first assume that $a,b,c,b+c-a,-a,-b,-c$ and $a-b-c$ are distinct integers. Define a new bar partition $\nu$ by its bead configuration on the $p$-runner abacus, 
$$\mathcal{A}(\nu)=\{-a,b,c,a-b-c\}\cup\mathcal{A}(\lambda)\backslash\{a,-b,-c,b+c-a\}.$$ 
Then $\nu$ can be obtained from $\lambda$ by removing a $|b-a|$-bar and adding a $|b-a|$-bar, so $|\nu|=|\lambda|$ and since $p|(a-b)$, we have $\overline{\nu}_p=\overline{\lambda}_p$. Alternatively, we can obtain $\nu$ from $\lambda$ by removing a $|c-a|$-bar and adding a $|c-a|$-bar, so it follows from the divisibility of $a-c$ by $q$ that $\overline{\nu}_q=\overline{\lambda}_q$. Hence $\lambda$ is not the only partition with $p$-bar-core $\mu$ and $q$-bar-core $\sigma$ whose parts sum to $N$. 

If instead the integers $a,b,c$ and $b+c-a$ are not distinct, i.e., if $a=b+c-a$ or $b=c$, then they must all be congruent modulo $pq$, and $\lambda$ therefore cannot be a $pq$-bar-core. 
By adding and removing the same number of $pq$-bars, we can obtain a new partition $\nu$ from $\lambda$ with $\overline{\nu}_{pq}=\overline{\lambda}_{pq}$, $|\nu|=|\lambda|$, and $\nu\neq\lambda$. Then it is easy to see that $\overline{\nu}_p=\mu$ and $\overline{\nu}_q=\sigma$, as removing a $pq$-bar is the same as removing $p$ $q$-bars or $q$ $p$-bars. 

Now we may assume that $a,b,c,b+c-a$ are distinct but $$\{a,b,c,b+c-a\}\cap\{-a,-b,-c,a-b-c\}\neq\emptyset.$$ 
However, 
$-a=a\Rightarrow a=0\not\in\mathcal{A}(\lambda)$, a contradiction; \newline
$a-b-c=a\Rightarrow b=-c\in\mathcal{A}(\lambda)$, or $b=c=0$ and $b+c-a=-a\in\mathcal{A}(\lambda)$, both contradictions; \newline
and $a-b-c=b+c-a\Rightarrow  b+c-a=0\not\in\mathcal{A}(\lambda)$, a contradiction; \newline
so we need to consider six separate cases (or three, up to symmetry): 

(i) $a=-b$; \newline
then $-b=a\equiv b$ (mod $p)\Rightarrow p|2b\Rightarrow p|b$ and $p|a$, and $b+c-a=c-2a$. If $c-a\in\mathcal{A}(\lambda)$, then we may define $\mathcal{A}(\nu)=\{-a,c,a-c\}\cup\mathcal{A}(\lambda)\backslash\{a,-c,c-a\}$ so that we can obtain $\nu$ from $\lambda$ by removing and adding $|a|$-bars, or by removing and adding $|c-a|$-bars, and thus $\nu$ has the same size, $p$-bar-core, and $q$-bar-core as $\lambda$, since $p|a$ and $q|(a-c)$, but is distinct from $\lambda$. If instead $a-c\in\mathcal{A}(\lambda)$, defining $\mathcal{A}(\nu)=\{-a,c-a,2a-c\}\cup\mathcal{A}(\lambda)\backslash\{a,a-c,c-2a\}$, we also have $|\nu|=|\lambda|$, and we can obtain $\nu$ from $\lambda$ by adding and removing $|a|$-bars, or by adding and removing $|c-a|$-bars.

(ii) $a=-c\equiv0$ mod $q$; \newline
let $\mathcal{A}(\nu)=\begin{cases}
\{-a,b,a-b\}\cup\mathcal{A}(\lambda)\backslash\{a,-b,b-a\},&\text{if }b-a\in\mathcal{A}(\lambda);\\ 
\{-a,b-a,2a-b\}\cup\mathcal{A}(\lambda)\backslash\{a,a-b,b-2a\},&\text{if }a-b\in\mathcal{A}(\lambda).\end{cases}$ 

(iii) $b=-b$; \newline
if we let $\mathcal{A}(\nu)=\{-a,c,a-c\}\cup\mathcal{A}(\lambda)\backslash\{a,-c,c-a\}$, then we can obtain $\nu$ from $\lambda$ by removing and adding $|a|$-bars, or by removing and adding $|c-a|$-bars, and thus $\nu$ meets our criteria since $0=b\equiv a$ mod $p$.

(iv) $c=-c$; \newline
let $\mathcal{A}(\nu)=\{-a,b,a-b\}\cup\mathcal{A}(\lambda)\backslash\{a,-b,b-a\}$.

(v) $b=a-b-c$; \newline
we have $q|(a-c)=2b$, so $q|b=\tfrac{a-c}{2}$, and $p|(a-b)=\tfrac{a+c}{2}$. If $a-\tfrac{a-c}{2}=\tfrac{a+c}{2}\in\mathcal{A}(\lambda)$, then letting $\mathcal{A}(\nu)=\{\tfrac{a-c}{2},c,-\tfrac{a+c}{2}\}\cup\mathcal{A}(\lambda)\backslash\{\tfrac{c-a}{2},-c,\tfrac{a+c}{2}\}$, we have a bar partition $\nu$ that can be obtained from $\lambda$ by adding and removing $\tfrac{a-c}{2}$-bars, or by adding and removing $\tfrac{a+c}{2}$-bars. 
If $-\tfrac{a+c}{2}\in\mathcal{A}(\lambda)$, and we let $\mathcal{A}(\nu)=\{-a,\tfrac{a-c}{2},\tfrac{a+c}{2}\}\cup\mathcal{A}(\lambda)\backslash\{a,\tfrac{c-a}{2},-\tfrac{a+c}{2}\}$, then $\nu$ can be obtained from $\lambda$ by adding and removing $\tfrac{a-c}{2}$-bars, or by adding and removing $\tfrac{a+c}{2}$-bars. 

(vi) $c=\tfrac{a-b}{2}$; \newline
let $\mathcal{A}(\nu)=\begin{cases}
\{\tfrac{a-b}{2},b,-\tfrac{a+b}{2}\}\cup\mathcal{A}(\lambda)\backslash\{\tfrac{b-a}{2},-b,\tfrac{a+b}{2}\},&\text{if }\tfrac{a+b}{2}\in\mathcal{A}(\lambda);\\
\{-a,\tfrac{a-b}{2},\tfrac{a+b}{2}\}\cup\mathcal{A}(\lambda)\backslash\{a,\tfrac{b-a}{2},-\tfrac{a+b}{2}\},&\text{if }-\tfrac{a+b}{2}\in\mathcal{A}(\lambda).\end{cases}$ 

Hence, we have proved that a bar partition $\lambda$ of $N$ has $p$-bar-weight equal to the $p$-bar-weight of its $q$-bar-core precisely when there is no other bar partition of $N$ with $p$-bar-core $\overline{\lambda}_p$ and $q$-bar-core $\overline{\lambda}_q$.
\end{proof}

We will now give a method for constructing the unique bar partition $\lambda\in\overline{C}_{p,q}$ with $\overline{\lambda}_p=\mu$ and $\overline{\lambda}_q=\sigma$, for a given $p$-bar-core $\mu$ and $q$-bar-core $\sigma$ with $\overline{\mu}_q=\overline{\sigma}_p$, which we will henceforth denote by $\mu\boxplus\sigma$. In theory one can do this as in the proof of Proposition \ref{prop5}: find $a\in\mathfrak{W}_p$ such that $a\overline{\mu}_q=\mu$, and then compute $a\sigma$. But in practice, the method we give here will prove much more efficient. We will need the following lemma. 

\begin{lem}\label{lem2} 
Suppose that $\lambda\in\overline{C}_q$ and $j,k\not\equiv0\:(\text{mod }p)$. Then 
$$\Delta_{j\text{ mod }p}\lambda\equiv\Delta_{k\text{ mod }p}\lambda\:(\text{mod }q)\Rightarrow\lambda^{(j\text{ mod }p)}=\lambda^{(k\text{ mod }p)}.$$
\end{lem}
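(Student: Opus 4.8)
The plan is to exploit the structure of a $q$-bar-core $\lambda$ on the $p$-runner abacus and to recall, from the proof of \cite[Proposition 4.1]{F3} cited in the proof of Proposition \ref{prop7}, that when $\lambda$ is a $q$-bar-core the elements $\Delta_i\lambda$ of the $p$-set that share a common residue modulo $q$ form an arithmetic progression with common difference $q$. So I would first reduce to the case where $\Delta_j\lambda$ and $\Delta_k\lambda$ differ by exactly $q$: by transitivity it is enough to show that if $\Delta_j\lambda = \Delta_k\lambda - q$ then $\lambda^{(j)} = \lambda^{(k)}$, since any two $\Delta$'s with equal residue mod $q$ are linked by a chain of such single steps (possibly passing through intermediate residue classes mod $p$ whose $\Delta$-values lie strictly between them in the same arithmetic progression). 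Set $d = \Delta_k\lambda$ and $d-q = \Delta_j\lambda$, so $d \equiv k$ and $d-q\equiv j$ modulo $p$; note $d\notin\mathcal A(\lambda)$ and $d-q\notin\mathcal A(\lambda)$ by definition of the $p$-set, while $d - p \in \mathcal A(\lambda)$ (else $d$ would not be the smallest element of its class outside $\mathcal A(\lambda)$; here I am using $\lambda \in \overline C_p$ as well — wait, $\lambda$ need only be a $q$-bar-core, so I must be careful and instead argue directly from the $q$-bar-core property).

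The key step is then to compare the bead configurations on runners $j$ and $k$ of the $p$-runner abacus. Recall $\lambda^{(j \text{ mod } p)}$ is read off from the empty-space pattern above the beads on runner $j$, and $\Delta_j\lambda$ is the smallest integer in class $j$ not in $\mathcal A(\overline\lambda_p)$ — but since I am only assuming $\lambda$ is a $q$-bar-core, I should instead use that $\lambda^{(j)}$ together with the position of the lowest bead on runner $j$ (equivalently $\Delta_j\overline\lambda_p$, or a related offset) determines runner $j$ completely. The heart of the matter is that $\lambda$ being a $q$-bar-core forces a rigid relationship between runners $j$ and $k$ when $\Delta_j\lambda \equiv \Delta_k\lambda$ mod $q$: translating every bead on runner $k$ down by exactly one position (i.e. subtracting $p$ from its label) and every bead in class $k$ not present down by $p$, one lands — using the $q$-bar-core condition $x - q \in \mathcal A(\lambda) \Leftrightarrow x \in \mathcal A(\lambda)$ applied along the $q$-step from class $k$ to class $j$ — on exactly the bead pattern of runner $j$. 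More precisely, I would show $x \in \mathcal A(\lambda)$ with $x \equiv k$ (mod $p$) if and only if $x - (d - (d-q)) \cdot(\text{appropriate multiple}) \in \mathcal A(\lambda)$; since $\lambda$ is a $q$-bar-core, $x\in\mathcal A(\lambda) \Rightarrow x - q\in\mathcal A(\lambda)$, and one checks the shift by $q$ carries class $k$ to class $j$ bead-for-bead preserving the relative gap structure, i.e. preserving the associated partition.

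I expect the main obstacle to be bookkeeping the offsets correctly: the partition $\lambda^{(j)}$ depends not on the absolute bead positions on runner $j$ but on the gaps above beads, so I must verify that the $q$-shift from class $k$ to class $j$ does not merely inject $\mathcal A(\lambda)\cap(k + p\mathbb Z)$ into $\mathcal A(\lambda)\cap(j + p\mathbb Z)$ but is a bijection that moreover matches the lowest bead on one runner to the lowest on the other — this is precisely where the hypothesis $\Delta_j\lambda \equiv \Delta_k\lambda$ (mod $q$) is used, pinning down the offset so that the gap patterns coincide rather than being merely shifted. Once the bijection respecting the lowest-bead alignment is in hand, reading off the partitions on the two runners gives $\lambda^{(j)} = \lambda^{(k)}$ directly, and the general case follows by transitivity along the arithmetic progression, as noted in the first paragraph.
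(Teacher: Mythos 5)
Your overall strategy is the same as the paper's: use the $q$-bar-core closure property ($x\in\mathcal{A}(\lambda)\Rightarrow x-q\in\mathcal{A}(\lambda)$) to map the beads in class $k$ modulo $p$ onto beads in class $j$ modulo $p$, with the congruence $\Delta_j\lambda\equiv\Delta_k\lambda\ (\text{mod }q)$ aligning the reference points $\Delta_j\lambda$ and $\Delta_k\lambda$ so that the two runners' gap patterns can be compared. However, you have left precisely the decisive step as an acknowledged to-do: you observe that the shift by $q$ only \emph{injects} $\mathcal{A}(\lambda)\cap(k+p\mathbb{Z})$ into $\mathcal{A}(\lambda)\cap(j+p\mathbb{Z})$ and say that you ``must verify'' it is a bijection matching the reference points, but you never supply that verification, and it is the entire content of the lemma. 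The missing ingredient is the beta-set containment fact (Fayers' Lemma 2.1, reproved in the paper): writing $\{x\in\mathcal{A}(\lambda)\mid x\equiv j\ (\text{mod }p)\}=\{p(\lambda^{(j)}_i-i)+\Delta_j\lambda\mid i\in\mathbb{N}\}$, the injection you construct translates into $\mathcal{B}^{\lambda^{(k)}}_0\subseteq\mathcal{B}^{\lambda^{(j)}}_0$, and a containment between beta-sets of two partitions with the same shift forces equality, because both sets contain all sufficiently negative integers and have the same number of elements above any large threshold. Without this counting argument (or an equivalent one) the proof is not complete; the paper runs the same idea contrapositively, producing from $\lambda^{(j)}\neq\lambda^{(k)}$ an element of $\mathcal{A}(\lambda)$ whose translate by a positive multiple of $q$ is missing, contradicting $\lambda\in\overline{C}_q$. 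A secondary point: your reduction to the case $\Delta_k\lambda-\Delta_j\lambda=q$ is unnecessary and costly. It relies on the $\Delta$-values in a common residue class modulo $q$ forming an arithmetic progression, which the paper only establishes for partitions in $\overline{C}_p\cap\overline{C}_q$ (so you would additionally need Olsson's theorem to know $\overline{\lambda}_p\in\overline{C}_q$); the direct argument works verbatim when $\Delta_k\lambda-\Delta_j\lambda$ is any positive multiple of $q$, since $\mathcal{A}(\lambda)$ is closed under subtracting $q$ repeatedly. Also, be careful that $\Delta_k\lambda\notin\mathcal{A}(\overline{\lambda}_p)$ by definition, not $\Delta_k\lambda\notin\mathcal{A}(\lambda)$; you partially catch this, but the slip should be removed from a final write-up.
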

\begin{proof}
Without loss of generality, suppose $\Delta_k\lambda>\Delta_j\lambda$. We have that $$\{x\in\mathcal{A}(\lambda)|x\equiv j\text{ (mod }p)\}=\{p(\lambda^{(j)}_i-i)+\Delta_j\lambda|i\in\mathbb{N}\},$$ where $\lambda^{(j)}_i=0$ for $i>l$, if $l$ is the number of (positive) parts of $\lambda^{(j)}$, since $$r:=|\{x\in\lambda|x\equiv j\text{ (mod }p)\}|-|\{x\in\lambda|x\equiv-j\text{ (mod }p)\}|=\tfrac{\Delta_j\lambda-j}{p}.$$ If $\lambda^{(j)}\neq\lambda^{(k)}$, then $\mathcal{B}^{\lambda^{(k)}}_0\not\subseteq\mathcal{B}^{\lambda^{(j)}}_0$, by the following result \cite[Lemma 2.1]{F1}: 
\begin{center}Suppose $\tau$ and $\rho$ are partitions and $r\in\mathbb{Z}$. If $\mathcal{B}^\tau_r\subseteq\mathcal{B}^\rho_r$, then $\tau=\rho$. 
\end{center}
The proof of this is simple: Choose $N$ sufficiently large that $\tau_N=\rho_N=0$. Then $\mathcal{B}^\tau_r$ and $\mathcal{B}^\rho_r$ both contain all integers less than or equal to $r-N$. Moreover, $\mathcal{B}^\tau_r$ contains exactly $N-1$ elements greater than $r-N$, namely $\tau_1-1+r,\dots,\tau_{N-1}-(N-1)+r$, and similarly $\mathcal{B}^\rho_r$ contains exactly $N-1$ elements greater than $r-N$. Since $\mathcal{B}^\tau_r\subseteq\mathcal{B}^\rho_r$, we get $\mathcal{B}^\tau_r=\mathcal{B}^\rho_r$, and hence $\tau=\rho$. 

Thus there is an $i$ with $\lambda^{(k)}_i-i\not\in\mathcal{B}^{\lambda^{(j)}}_0$, or equivalently, $\lambda^{(k)}_i-i+r\not\in\mathcal{B}^{\lambda^{(j)}}_r$, and it follows that $m(\lambda^{(k)}_i-i)+\Delta_j\lambda\not\in\mathcal{A}(\lambda)$. Since $m(\lambda^{(k)}_i-i)+\Delta_k\lambda\in\mathcal{A}(\lambda)$, $\lambda$ cannot be a $(\Delta_k\lambda-\Delta_j\lambda)$-bar-core and is therefore not a $q$-bar-core; a contradiction.
\end{proof} 

Note that since $\Delta_0\lambda=0$ for all bar partitions $\lambda$, it is not necessary that $\lambda^{(0)}$ and $\lambda^{(j)}$ are equal when $\Delta_j\lambda\equiv0$ (mod $q$). 

\begin{prop}\label{prop6}
Suppose $\mu\in\overline{C}_p$ and $\sigma\in\overline{C}_q$ are such that $\overline{\mu}_q=\overline{\sigma}_p$. Then there is a unique $\lambda\in\mathcal{P}_2$ with $p$-bar-core $\mu$ and with the same $q$-weighted $p$-quotient as $\sigma$. 
\end{prop}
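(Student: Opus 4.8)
The plan is to establish existence and uniqueness separately, using the machinery of the $q$-weighted $p$-quotient together with Proposition~\ref{prop5}. For existence, I would simply take $\lambda:=\mu\boxplus\sigma$, the unique element of $\overline{C}_{p,q}$ with $\overline{\lambda}_p=\mu$ and $\overline{\lambda}_q=\sigma$ guaranteed by Proposition~\ref{prop5}. Since $\lambda\in\overline{C}_{p,q}$ lies in the same level $q$ orbit of $\mathfrak{W}_p$ as its $q$-bar-core (Proposition~\ref{prop4}), namely $\overline{\lambda}_q=\sigma$, Proposition~\ref{prop8} tells us that $\lambda$ and $\sigma$ have the same $q$-weighted $p$-quotient, and of course $\overline{\lambda}_p=\mu$. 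So existence is essentially immediate from results already in hand.

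The substance is uniqueness. Suppose $\lambda$ and $\lambda'$ both have $p$-bar-core $\mu$ and the same $q$-weighted $p$-quotient $\mathcal{Q}^q_p(\lambda)=\mathcal{Q}^q_p(\lambda')=\mathcal{Q}^q_p(\sigma)$. By Proposition~\ref{prop8}, $\lambda$ and $\lambda'$ lie in the same level $q$ orbit of $\mathfrak{W}_p$, so there is $a\in\mathfrak{W}_p$ with $\lambda'=a\lambda$. By Lemma~\ref{lem4}(4), $\mu=\overline{\lambda'}_p=\overline{(a\lambda)}_p=a(\overline{\lambda}_p)=a\mu$, so $a$ fixes the $p$-bar-core $\mu$. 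The goal is then to show that any $a\in\mathfrak{W}_p$ fixing $\mu$ must act trivially enough on $\lambda$ to force $\lambda'=\lambda$; equivalently, that $\lambda$ is the \emph{unique} element of its level $q$ orbit whose $p$-bar-core is $\mu$. The point is that the $q$-weighted $p$-quotient records, for each residue class $k$ modulo $q$, the multiset of components $\lambda^{(j)}$ with $\Delta_j\lambda\equiv k$; Lemma~\ref{lem2} (applied to $\overline{\lambda}_q=\sigma$) shows that within such a block all the components that belong to nonzero runners are forced to be equal, so the multiset data plus the $p$-set $\{\Delta_i\mu\}$ determines the $p$-quotient $\mathcal{Q}_p(\lambda)$ of $\lambda$ completely (the runner labels $j$ are pinned down by which $\Delta_j\mu$ lies in which residue class mod $q$, up to permutations within a block that don't change $\lambda^{(j)}$). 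Since $\lambda$ is determined by its $p$-bar-core $\mu$ and $p$-quotient $\mathcal{Q}_p(\lambda)$ by Olsson's result \cite[Proposition 2.2]{O1}, this gives uniqueness.

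Concretely I would argue: from $\mu=\overline{\lambda}_p$ we read off the $p$-set $\{\Delta_0\mu,\dots,\Delta_{p-1}\mu\}$, which also is the $p$-set of $\lambda$. Partition the indices $\{0,\dots,p-1\}$ into blocks according to the residue of $\Delta_i\mu$ modulo $q$. Within a block containing only indices $j\not\equiv 0\pmod p$ with $\Delta_j\mu\not\equiv 0\pmod q$, Lemma~\ref{lem2} forces all the $\lambda^{(j)}$ to coincide, so the multiset $[\lambda^{(j)}:\Delta_j\mu\equiv k]$ recorded by $\mathcal{Q}^q_p(\lambda)$ determines each $\lambda^{(j)}$ individually. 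The only subtlety is the block containing the index $0$ (where $\Delta_0\mu=0$), together with any $j$ having $\Delta_j\mu\equiv 0\pmod q$: here $\lambda^{(0)}$ is a strict partition while the others are ordinary partitions, so $\lambda^{(0)}$ is distinguished as the unique strict-partition entry in its block of the multiset, and the remaining entries of that block are again forced equal by Lemma~\ref{lem2}. Hence $\mathcal{Q}_p(\lambda)$ is completely determined, and uniqueness follows from \cite[Proposition 2.2]{O1}.

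I expect the main obstacle to be bookkeeping in that last block: verifying carefully that the component on the zero runner ($\lambda^{(0)}$, a bar partition) cannot be confused with components $\lambda^{(j)}$, $j\not\equiv0$, whose $p$-set element happens to be divisible by $q$, and checking that conjugation symmetry $\lambda^{(-j)}=(\lambda^{(j)})'$ interacts correctly with the pairing $\Delta_j\mu+\Delta_{-j}\mu=p$ when reconstructing which runner is which. Once one is convinced that the multiset plus the $p$-set pins down the ordered $p$-quotient, the argument closes cleanly.
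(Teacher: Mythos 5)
Your proof is correct, but the two halves relate to the paper differently. For uniqueness you are doing essentially what the paper does: apply Lemma \ref{lem2} to $\sigma$ (which is the hypothesised $q$-bar-core, so the lemma genuinely applies to it, not to $\lambda$) to see that within each residue class of $\Delta_j\sigma$ modulo $q$ the nonzero components of $\mathcal{Q}_p(\sigma)$ coincide, so the multiset $\mathcal{Q}^q_p(\sigma)$ together with the $p$-set of $\mu$ pins down the ordered $p$-quotient, and then invoke Olsson's result that $p$-bar-core plus $p$-quotient determine $\lambda$. For existence you diverge: the paper constructs $\lambda$ directly, by producing a permutation $\phi$ of runners with $\Delta_j\mu\equiv\Delta_{\phi(j)}\overline{\mu}_q\pmod q$ and setting $\lambda^{(j)}=\sigma^{(\phi(j))}$, whereas you take $\lambda=\mu\boxplus\sigma$ from Proposition \ref{prop5} and deduce $\mathcal{Q}^q_p(\lambda)=\mathcal{Q}^q_p(\sigma)$ via Propositions \ref{prop4} and \ref{prop8}. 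That is logically sound (Proposition \ref{prop5} precedes this result), and it makes Proposition \ref{prop10} immediate; but it is precisely the ``in theory'' method the paper explicitly sets aside, so your proof does not by itself yield the runner-by-runner algorithm that is the point of this section --- to recover it you would still have to check that the explicit assignment $\lambda^{(j)}=\sigma^{(\phi(j))}$ defines a valid candidate and then appeal to your uniqueness. One small caveat on the residue-$0$ block: your claim that $\lambda^{(0\text{ mod }p)}$ is ``the unique strict-partition entry'' in its block is not literally a valid discriminator (the common $q$-core $\delta$ on the nonzero runners could happen to have distinct parts); the clean way to separate runner $0$ is to note that the generators of the level $q$ action never move the residue class $0$ modulo $p$, so runner $0$ is matched to runner $0$. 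The paper is equally terse on this point, so this is a presentational refinement rather than a gap.
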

\begin{proof}
Let $\nu=\overline{\mu}_q$. Then by Proposition \ref{prop4} $\mu$ and $\nu$ lie in the same level $q$ orbit of $\mathfrak{W}_p$, so there is a permutation $\phi$ on $\{0,\dots,p-1\}$ such that $\Delta_j\mu\equiv\Delta_{\phi(j)}\nu$ (mod $n$) for each $j\in\{0,\dots,p-1\}$. Thus by Lemma \ref{lem2} $\nu$ must have the same $q$-weighted $p$-quotient as $\mu$. Since $\nu=\overline{\sigma}_p$, we may construct $\lambda$ by taking the bar partition with $p$-bar-core $\mu$ and $\lambda^{(j)}=\sigma^{(\phi(j))}$ for each $j\in\{0,\dots,p-1\}$. 

By Lemma \ref{lem2} we have $\sigma^{(j)}=\sigma^{(k)}$ whenever $\Delta_j\sigma\equiv\Delta_k\sigma$ (mod $q$), i.e. whenever $\Delta_{\phi^{-1}(j)}\mu\equiv\Delta_{\phi^{-1}(k)}\mu$ (mod $q$), so the $\lambda$ we construct is unique (as it is uniquely determined by its $p$-bar-core and $p$-quotient). 
\end{proof}

\begin{prop}\label{prop10}
Suppose $\mu\in\overline{C}_p$ and $\sigma\in\overline{C}_q$ are such that $\overline{\mu}_q=\overline{\sigma}_p$, and let $\lambda$ be the bar partition with $p$-bar-core $\mu$ and with the same $q$-weighted $p$-quotient as $\sigma$. Then $\lambda=\mu\boxplus\sigma$. 
\end{prop}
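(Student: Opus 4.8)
The plan is to combine Proposition~\ref{prop6}, Proposition~\ref{prop8} and the uniqueness clause of Proposition~\ref{prop5}. By definition $\mu\boxplus\sigma$ is the unique element of $\overline{C}_{p,q}$ whose $p$-bar-core is $\mu$ and whose $q$-bar-core is $\sigma$, so it suffices to verify that the bar partition $\lambda$ produced by Proposition~\ref{prop6} has these three properties: $\lambda\in\overline{C}_{p,q}$, $\overline{\lambda}_p=\mu$, and $\overline{\lambda}_q=\sigma$. The first and second are essentially immediate, and the third is the one genuine computation.

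First I would record that $\overline{\lambda}_p=\mu$ is part of the construction of $\lambda$ in Proposition~\ref{prop6}. Next, since $\lambda$ has the same $q$-weighted $p$-quotient as $\sigma$, Proposition~\ref{prop8} tells us that $\lambda$ and $\sigma$ lie in the same level $q$ orbit of $\mathfrak{W}_p$; fix $a\in\mathfrak{W}_p$ with $\lambda=a\sigma$. Now $\sigma$ is a $q$-bar-core, and every $q$-bar-core lies in $\overline{C}_{p,q}$ because $\overline{\sigma}_q=\sigma$ forces $\overline{\text{wt}}_p(\overline{\sigma}_q)=\overline{\text{wt}}_p(\sigma)$ trivially. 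Hence by Proposition~\ref{prop3} we get $\lambda=a\sigma\in\overline{C}_{p,q}$.

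For the $q$-bar-core, apply Lemma~\ref{lem4}(1) to $a\in\mathfrak{W}_p$: the level $q$ action of $\mathfrak{W}_p$ preserves the $q$-bar-core, so $\overline{\lambda}_q=\overline{(a\sigma)}_q=\overline{\sigma}_q=\sigma$. At this point $\lambda$ is a bar partition in $\overline{C}_{p,q}$ with $\overline{\lambda}_p=\mu$ and $\overline{\lambda}_q=\sigma$, and the hypotheses $\mu\in\overline{C}_p$, $\sigma\in\overline{C}_q$, $\overline{\mu}_q=\overline{\sigma}_p$ are exactly those needed to invoke Proposition~\ref{prop5}. That proposition asserts such a bar partition is unique, so $\lambda=\mu\boxplus\sigma$, as claimed.

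I do not expect a serious obstacle here: all the substantive work has been front-loaded into Propositions~\ref{prop5}, \ref{prop6}, \ref{prop8} and Lemma~\ref{lem4}. The only point requiring any care is making sure the orbit-equivalence criterion of Proposition~\ref{prop8} is legitimately applicable — i.e.\ that ``same $q$-weighted $p$-quotient'' really is the exact phrase relating $\lambda$ and $\sigma$ coming out of Proposition~\ref{prop6} — and that $\sigma\in\overline{C}_{p,q}$; both are checked above in one line each. Everything else is bookkeeping with the already-established invariance results.
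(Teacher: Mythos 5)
Your proposal is correct and follows essentially the same route as the paper: both use Proposition~\ref{prop8} to place $\lambda$ and $\sigma$ in the same level $q$ orbit and then transfer membership of $\overline{C}_{p,q}$ and the $q$-bar-core along that orbit before appealing to the uniqueness in Proposition~\ref{prop5}. The only cosmetic difference is that the paper cites Proposition~\ref{prop4} for the orbit-transfer step, whereas you unpack that step into its ingredients (Proposition~\ref{prop3} and Lemma~\ref{lem4}(1)).
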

\begin{proof}
By Proposition \ref{prop8} $\lambda$ and $\sigma$ lie in the same level $q$ orbit of $\mathfrak{W}_p$. Hence it follows from Proposition \ref{prop4} that $\lambda\in\overline{C}_{p,q}$ and $\overline{\lambda}_q=\sigma$. 
\end{proof}

\begin{exam}
\begin{align*}\mu=(&27,22,17,12,7,4,2)&&&&\sigma=(11,8,5,2)\\
&\abacus(vvvvv,bbbbb,nbbbb,nbbbb,nbbbb,nbbbb,nbbnb,nbonb,nbnnb,nnnnb,nnnnb,nnnnb,nnnnb,nnnnn,vvvvv)
&&\overline{\mu}_3=(1)=\overline{\sigma}_5&&
\abacus(vvvvv,bbbbb,bbbbb,bbbbb,bbbbb,bnbbn,bbnbb,nbonb,nnbnn,bnnbn,nnnnn,nnnnn,nnnnn,nnnnn,vvvvv)
\end{align*}
\begin{align*}
\mathcal{Q}^3_5(\mu)=[(0,\varnothing),(2,\varnothing),(2,\varnothing),(0,\varnothing),(0,\varnothing)]\\
{Q}^3_5(\sigma)=&[(0,(1)),(0,(2)),(2,(1^2)),(0,(2)),(2,(1^2))]
\end{align*}
Using our algorithm, we will obtain the bead configuration for $\mu\boxplus\sigma$: \newline
We first replace the 0-runner with the 0-runner from the configuration for $\sigma$. Next, we have $\Delta_{1\text{ mod }5}\sigma\equiv0$ (mod 3) so for each runner $j\in\{1,\dots,4\}$ such that $\Delta_{j\text{ mod }5}\mu\equiv0$ (mod 3), in this case runners 3 and 4, since $\sigma^{(1\text{ mod }5)}=(2)$, we move the lowest bead on the runner upwards 2 spaces. Finally, $\Delta_{2\text{ mod }5}\sigma\equiv2\equiv\Delta_{1\text{ mod }5}\mu\equiv\Delta_{2\text{ mod }5}\mu$ (mod 3) and $\sigma^{(2\text{ mod }5)}=(1^2)$, so we move the second lowest beads up by two spaces on runners 1 and 2.
\begin{center}\abacus(vvvvv,bbbbb,bbbbb,nbbbb,nbbbb,nbbbb,nbbbb,nbbnb,nbonb,nbnnb,nnnnb,nnnnb,nnnnb,nnnnb,nnnnn,nnnnn,vvvvv)\:\raisebox{3.1cm}{$\longrightarrow$}\:\abacus(vvvvv,bbbbb,bbbbb,nbbbb,nbbbb,nbbbb,nbbbb,nbnnb,nbonb,nbbnb,nnnnb,nnnnb,nnnnb,nnnnb,nnnnn,nnnnn,vvvvv)\:\raisebox{3.1cm}{$\longrightarrow$}\:\abacus(vvvvv,bbbbb,nbbbb,nbbbb,bbbbb,nbbbb,nbbbb,nbnnb,nbonb,nnbnb,nnnnb,nbnnb,nnnnb,nnnnb,nnnnn,nnnnn,vvvvv)\:\raisebox{3.1cm}{$\longrightarrow$}\:\abacus(vvvvv,bbbbb,nbbbb,nbbbb,bbbbb,nbbnb,nbbbb,nbnbb,nbonb,nnbnb,nnnnb,nbnnb,nnnnn,nnnnb,nnnnb,nnnnn,vvvvv)
\end{center}
Thus we obtain the bar partition $\mu\boxplus\sigma=(32,27,17,14,12,7,5,2)$.
\end{exam}

\section{The $\Upsilon$-orbit}
\noindent
In \cite{BO}, Bessenrodt \& Olsson showed there is a maximal bar partition $\Upsilon_{\text{min}\{p,q\},\text{max}\{p,q\}}$ which is both a $p$-bar-core and a $q$-bar-core, where $\Upsilon_{p,q}$ is the \textbf{Yin/Yang partition}, with parts 
$$(\tfrac{p-1}{2}-k)q-(l+1)p\text{, for }k,l\in\mathbb{Z}_{\geq0}.$$ 
It is maximal in the sense that whenever $\lambda:=\{\lambda_1,\lambda_2,\dots\}$ is both a $p$-bar-core and a $q$-bar-core, we may write $\Upsilon_{\text{min}\{p,q\},\text{max}\{p,q\}}=\{\lambda_1+a_1,\lambda_2+a_2,\dots\}$ with $a_1,a_2,\dots\in\mathbb{Z}_{\geq0}$. 
When $p<q$, $\Upsilon_{p,q}$ is called the Yin partition, and $\Upsilon_{q,p}$ the Yang partition. 

Since $\Upsilon_{p,q}\in\overline{C}_p\cap\overline{C}_q$, its $p$-set is $\{0,q,\dots,\nicefrac{q(p-1)}{2}\}\cup\{p-q,p-2q,\dots,p-\nicefrac{q(p-1)}{2}\}$, and its $q$-set is $\{0\}\cup\{\nicefrac{q(p+1)}{2}-kp|k=1,\dots,q-1\}$. Thus, the $p$-set of $\Upsilon_{p,q}$ consists of $\nicefrac{(p+1)}{2}$ elements divisible by $q$ and $\nicefrac{(p-1)}{2}$ elements congruent to $p$ modulo $q$, while the $q$-set of $\Upsilon_{p,q}$ consists of $0$ and $q-1$ integers congruent to $\nicefrac{q(p+1)}{2}$ modulo $p$. 

\begin{exam}
The bar partition 
$$\Upsilon_{5,11}=(17,12,7,6,2,1)$$ 
has $5$-set $\{0,11,22,-17,-6\}$, and $11$-set $\{0,23,13,3,-7,-17,28,18,8,-2,-12\}$, while 
$$\Upsilon_{11,5}=(14,9,4,3)$$ 
has $5$-set $\{0,-14,-3,8,19\}$, and $11$-set $\{0,1,-9,25,15,5,6,-4,-14,20,10\}$.
\end{exam}

To conclude this paper, we will consider $\overline{C}^\Upsilon_{p,q}$, the $\Upsilon$\textbf{-orbit} of the set $\overline{C}_{p,q}$, which we define to be the orbit of $\Upsilon_{p,q}$ under the action of $\mathfrak{W}_p\times\mathfrak{W}_q$. Our final result will establish a bijection 
$$\overline{C}^\Upsilon_{p,q}\to2^{\{1,\dots,\nicefrac{(p-1)}{2}\}}\times\overline{C}_p\times\overline{C}_q.$$ 

\begin{lem}\label{sig2}
Suppose that $\sigma\in\overline{C}_q$. If $\overline{\sigma}_p=\Upsilon_{p,q}$, then $\sigma^{(0\text{ mod }p)}$ is a $q$-bar-core and there is a $q$-core $\delta$ such that 
$$\sigma^{(j\text{ mod }p)}=\begin{cases}\delta,&\text{ for }j\equiv kq,\:k\in\{1,\dots,\tfrac{p-1}{2}\},\\
\delta',&\text{ for }j\equiv kq,\:k\in\{\tfrac{p+1}{2},\dots,p-1\}.\end{cases}$$ 
If instead $\overline{\sigma}_p=\Upsilon_{q,p}$, then $\sigma^{(0\text{ mod }p)}$ is a $q$-bar-core and there is a self-conjugate $q$-core $\gamma$ such that $\sigma^{(j\text{ mod }p)}=\gamma=\gamma'$, for all $j\not\equiv0\:(\text{mod }p)$. 
\end{lem}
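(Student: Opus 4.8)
The plan is to read off the $p$-quotient of $\Upsilon_{p,q}$ (and of $\Upsilon_{q,p}$) directly from the shape of its $p$-set, and then apply Lemma~\ref{lem2} to force the components of $\sigma$ to coincide in the required pattern. Recall that $\overline{\sigma}_p = \Upsilon_{p,q}$ means $\sigma$ has $p$-bar-core $\Upsilon_{p,q}$, so the $p$-set of $\sigma$ equals the $p$-set of $\Upsilon_{p,q}$. As computed in the excerpt, the $p$-set of $\Upsilon_{p,q}$ consists of $\tfrac{p+1}{2}$ elements divisible by $q$ and $\tfrac{p-1}{2}$ elements congruent to $p$ modulo $q$; in particular $\Delta_0\sigma = 0$ is one of the elements divisible by $q$, and the remaining $\tfrac{p-1}{2}$ elements divisible by $q$ are $\Delta_j\sigma$ for $j \equiv q,2q,\dots,\tfrac{p-1}{2}q$, while the $\tfrac{p-1}{2}$ elements $\equiv p$ modulo $q$ are $\Delta_j\sigma$ for $j \equiv \tfrac{p+1}{2}q,\dots,(p-1)q$ (using that $\Delta_j\sigma + \Delta_{-j}\sigma = p$, and $0 \not\equiv p$ modulo $q$ since $\gcd(p,q)=1$ and $p \geq 3$).

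First I would handle $\sigma^{(0\text{ mod }p)}$: this is the bar partition whose parts are $\{x/p : x\in\sigma,\, p\mid x\}$, and since removing a $q$-bar from $\sigma$ in a multiple-of-$p$ position corresponds (by the argument in Lemma~\ref{lem3}, or rather its $q$-analogue) to removing a $q$-bar from $\sigma^{(0\text{ mod }p)}$, and $\sigma\in\overline{C}_q$, it follows that $\sigma^{(0\text{ mod }p)}\in\overline{C}_q$. (Alternatively this is immediate from Lemma~\ref{lem2} applied with care, but the direct abacus argument is cleanest.) Next, for $j \equiv q,2q,\dots,\tfrac{p-1}{2}q$ modulo $p$, all the corresponding $\Delta_j\sigma$ are divisible by $q$, hence mutually congruent modulo $q$, so Lemma~\ref{lem2} gives $\sigma^{(j\text{ mod }p)} = \sigma^{(q\text{ mod }p)} =: \delta$ for all such $j$; this $\delta$ is a $q$-core because $\sigma$ is a $q$-bar-core (again Lemma~\ref{lem3}, part (1), in the relevant $q$-version, or the remark that the $q$-bar-core condition on $\sigma$ forces every non-zero component of its $q$-quotient to be a $q$-core — but here we want the $p$-quotient, so one really does use that $\sigma\in\overline{C}_q$ together with the abacus description). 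Symmetrically, for $j \equiv \tfrac{p+1}{2}q,\dots,(p-1)q$, all $\Delta_j\sigma \equiv p$ modulo $q$, so these components are all equal; and since each such $j$ is $\equiv -j'$ modulo $p$ for some $j'$ in the first range, the conjugacy relation $\sigma^{(-j'\text{ mod }p)} = (\sigma^{(j'\text{ mod }p)})'$ identifies the common value as $\delta'$.

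The second case, $\overline{\sigma}_p = \Upsilon_{q,p}$, is handled the same way but now the $p$-set of $\Upsilon_{q,p}$ (obtained from the formula by swapping the roles of $p$ and $q$, i.e. the $p$-set of the Yang partition) has the property that, apart from $\Delta_0\sigma = 0$, \emph{all} $p-1$ nonzero elements are mutually congruent modulo $q$ — indeed one checks from $\Upsilon_{q,p}=\{(\tfrac{q-1}{2}-k)p - (l+1)q\}$ that its $p$-set is $\{0\}\cup\{\text{(something)} - kq : k = 1,\dots,p-1\}$, a single arithmetic progression modulo $q$ of step... more precisely all congruent to a fixed residue mod $q$. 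Then Lemma~\ref{lem2} forces $\sigma^{(j\text{ mod }p)}$ to be a single $q$-core $\gamma$ for \emph{all} $j\not\equiv 0$, and pairing $j$ with $-j$ via the conjugacy relation forces $\gamma = \gamma'$, i.e. $\gamma$ is self-conjugate; and $\sigma^{(0\text{ mod }p)}\in\overline{C}_q$ as before. The main obstacle I anticipate is purely bookkeeping: correctly extracting the residues-mod-$q$ pattern of the $p$-set of each Yin/Yang partition from the parametric formula for its parts, and being careful that $\Delta_0\sigma = 0$ is genuinely in a different residue class mod $q$ than the other distinguished elements (so that Lemma~\ref{lem2} does not spuriously force $\sigma^{(0\text{ mod }p)}$ to equal the other components) — this is exactly the content of the sentence following Lemma~\ref{lem2} in the excerpt, and it is what makes the $0$-component genuinely free to be an arbitrary $q$-bar-core rather than being pinned down.
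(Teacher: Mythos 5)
Your proposal is correct and follows essentially the same route as the paper: both read off the residues modulo $q$ of the elements of the $p$-set of $\Upsilon_{p,q}$ (two classes) and of $\Upsilon_{q,p}$ (one nonzero class), apply Lemma~\ref{lem2} to identify the corresponding components of $\mathcal{Q}_p(\sigma)$, invoke Lemma~\ref{lem3}(1) to see that $\sigma^{(0\text{ mod }p)}$ is a $q$-bar-core and the other components are $q$-cores, and use $\sigma^{(-j\text{ mod }p)}=(\sigma^{(j\text{ mod }p)})'$ to get the $\delta/\delta'$ pattern and the self-conjugacy of $\gamma$. Your extra care about $\Delta_0\sigma=0$ being exempt from Lemma~\ref{lem2} is exactly the caveat the paper records after that lemma.
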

\begin{proof}
When $\overline{\sigma}_p=\Upsilon_{p,q}$, the non-zero elements of the shared $p$-set of $\Upsilon_{p,q}$ and $\sigma$ belong to two congruence classes modulo $p$, so by Lemmas \ref{lem3}(1) and \ref{lem2}, $\mathcal{Q}_p(\sigma)$ consists of a $q$-bar-core $\sigma^{(0\text{ mod }p)}$ and at most two distinct $q$-cores. Moreover, since $(\sigma^{(j\text{ mod }p)})'=\sigma^{(-j\text{ mod }p)}$ for each $j\not\equiv0\:($mod $p)$, the $p$-quotient of $\sigma$ consists of $\sigma^{(0\text{ mod }p)}$ (the parts of $\sigma$ which are multiples of $p$, divided by $p$) and either $p-1$ other empty bar partitions (when $\sigma=\Upsilon_{p,q}$), or $\nicefrac{(p-1)}{2}$ copies each of two conjugate partitions. 

When $\overline{\sigma}_p=\Upsilon_{q,p}$, all of the non-zero elements in the $p$-set of $\sigma$ are congruent modulo $p$, so again by Lemmas \ref{lem3}(1) and \ref{lem2}, the $q$-quotient $\mathcal{Q}_q(\sigma)$ simply consists of a $q$-bar-core $\sigma^{(0\text{ mod }p)}$ and $p-1$ copies of a self-conjugate $q$-core. 
\end{proof}

The above lemma means that the construction of $\mu\boxplus\sigma$ becomes even more straightforward when $\mu$ and $\sigma$ are contained in the $\Upsilon$-orbit. 

\begin{prop}\label{sig1}
Suppose $\mu\in\overline{C}_p$ and $\sigma\in\overline{C}_q$ are such that $\overline{\mu}_q=\overline{\sigma}_p=\Upsilon_{p,q}$. Then $\mu\boxplus\sigma$ is the bar partition $\lambda$ with $\overline{\lambda}_p=\mu$, $\lambda^{(0\text{ mod }p)}=\sigma^{(0\text{ mod }p)}$, and $$\lambda^{(j\text{ mod }p)}=\begin{cases}\sigma^{(j\text{ mod }p)}&\text{ if }\Delta_{j\text{ mod }p}\mu\equiv\Delta_{j\text{ mod }p}\Upsilon_{p,q}\:(\text{mod }q),\\(\sigma^{(j\text{ mod }p)})'&\text{ otherwise.}\end{cases}$$ 
Moreover, $\mu\boxplus\sigma$ is also the bar partition with $q$-bar-core $\sigma$ and the same $q$-quotient as $\mu$.  
\end{prop}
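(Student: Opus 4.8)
The plan is to read both claims off the explicit construction of $\mu\boxplus\sigma$ given in Propositions \ref{prop6} and \ref{prop10}, exploiting the very restrictive shape of the $p$-set and the $q$-set of $\Upsilon_{p,q}$ together with the rigidity supplied by Lemmas \ref{lem2} and \ref{sig2}. For the first claim, recall that by Propositions \ref{prop6} and \ref{prop10} the bar partition $\mu\boxplus\sigma$ has $p$-bar-core $\mu$ and $j$-th $p$-quotient component $\sigma^{(\phi(j))}$, where $\phi$ is any permutation of $\{0,\dots,p-1\}$ with $\Delta_{j\text{ mod }p}\mu\equiv\Delta_{\phi(j)\text{ mod }p}\overline{\mu}_q\:(\text{mod }q)$ for all $j$; such $\phi$ exist because $\mu$ and $\overline{\mu}_q=\Upsilon_{p,q}$ lie in a single level $q$ orbit of $\mathfrak{W}_p$ (Proposition \ref{prop4}), and I fix one with $\phi(0)=0$. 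Because $\overline{\sigma}_p=\Upsilon_{p,q}$, the common $p$-set of $\overline{\mu}_q$, of $\Upsilon_{p,q}$ and of $\sigma$ consists of multiples of $q$ and of integers $\equiv p\:(\text{mod }q)$, so $\{\Delta_{i\text{ mod }p}\sigma\:(\text{mod }q)\}$ meets exactly two residue classes modulo $q$, interchanged by $\Delta_{i}\mapsto p-\Delta_{i}=\Delta_{-i}$ (recall $\Delta_{i}\sigma+\Delta_{-i}\sigma=p$). Hence for $j\neq0$: if $\Delta_{j}\mu\equiv\Delta_{j}\Upsilon_{p,q}\:(\text{mod }q)$ then $\Delta_{\phi(j)}\sigma\equiv\Delta_{j}\sigma$, and Lemma \ref{lem2} gives $\sigma^{(\phi(j))}=\sigma^{(j)}$; otherwise $\Delta_{\phi(j)}\sigma$ lies in the other of the two classes, so $\Delta_{\phi(j)}\sigma\equiv\Delta_{-j}\sigma$ and Lemma \ref{lem2} gives $\sigma^{(\phi(j))}=\sigma^{(-j)}=(\sigma^{(j)})'$. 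Since also $\sigma^{(\phi(0))}=\sigma^{(0)}$, the bar partition produced by Proposition \ref{prop6} is exactly the $\lambda$ of the statement, whence $\lambda=\mu\boxplus\sigma$.

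For the ``moreover'' claim, first note that $\overline{C}_{p,q}=\overline{C}_{q,p}$ (Corollary \ref{cor2}), so Proposition \ref{prop5} with $p$ and $q$ interchanged identifies $\lambda=\mu\boxplus\sigma$ as the unique element of $\overline{C}_{p,q}$ with $q$-bar-core $\sigma$ and $p$-bar-core $\mu$; by the $p\leftrightarrow q$ forms of Propositions \ref{prop6} and \ref{prop10}, $\lambda$ therefore has $q$-bar-core $\sigma$ and the same $p$-weighted $q$-quotient as $\mu$. It remains to promote this to equality of the $q$-quotients themselves. The $q$-set of $\Upsilon_{p,q}$ is $\{0\}\cup\{\tfrac{q(p+1)}{2}-kp\mid k=1,\dots,q-1\}$, all of whose nonzero members are congruent modulo $p$ to $c_0:=\tfrac{q(p+1)}{2}\not\equiv0\:(\text{mod }p)$; since the $q$-set of $\lambda$ equals that of $\sigma$, and $\sigma$ lies in the level $p$ orbit of $\overline{\sigma}_p=\Upsilon_{p,q}$ under $\mathfrak{W}_q$ (Proposition \ref{prop4}), both $\mu$ and $\lambda$ satisfy $\Delta_{i\text{ mod }q}\equiv0\:(\text{mod }p)$ exactly when $i=0$ and $\Delta_{i\text{ mod }q}\equiv c_0\:(\text{mod }p)$ otherwise. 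By Lemma \ref{sig2} with $p$ and $q$ interchanged, $\mu^{(i\text{ mod }q)}$ equals a fixed self-conjugate $p$-core $\gamma$ for all $i\not\equiv0\:(\text{mod }q)$, so the $p$-weighted $q$-quotient of $\mu$ is the multiset $[(0,\mu^{(0\text{ mod }q)})]\cup[(c_0,\gamma)]^{\times(q-1)}$; equating this with that of $\lambda$ and using $c_0\neq0$ to force the unique pair with first entry $0$ to correspond, one gets $\lambda^{(0\text{ mod }q)}=\mu^{(0\text{ mod }q)}$ and $\lambda^{(i\text{ mod }q)}=\gamma$ for every $i\not\equiv0\:(\text{mod }q)$, i.e. $\mathcal{Q}_q(\lambda)=\mathcal{Q}_q(\mu)$. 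Since a bar partition is determined by its $q$-bar-core and $q$-quotient, this is the claim.

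I expect the second part to be the main obstacle: equality of $p$-weighted $q$-quotients comes essentially for free, but upgrading it to equality of the ordered tuple $\mathcal{Q}_q$ relies crucially on the fact that all nonzero elements of the $q$-set of $\Upsilon_{p,q}$ lie in one nonzero residue class modulo $p$, which forces $\mathcal{Q}_q(\mu)$ (hence $\mathcal{Q}_q(\lambda)$) to be constant away from the $0$-component; without this the multiset identity would pin down those components only up to reordering. The first part is largely bookkeeping, the one real input being that the $p$-set of $\Upsilon_{p,q}$ occupies only two residue classes modulo $q$, swapped by negation.
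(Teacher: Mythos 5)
Your proof is correct and follows essentially the same route as the paper's (much terser) argument: the first claim is read off the construction in Propositions \ref{prop6}/\ref{prop10} using the fact that the $p$-set of $\Upsilon_{p,q}$ occupies only the residue classes $0$ and $p$ modulo $q$, swapped by $\Delta_j\mapsto\Delta_{-j}$, and the second claim uses that all non-zero elements of the $q$-set lie in a single non-zero class modulo $p$, which forces $\mathcal{Q}_q(\mu)$ to be constant away from its $0$-component. You have merely filled in the details (the permutation $\phi$, the appeals to Lemmas \ref{lem2} and \ref{sig2}) that the paper leaves implicit.
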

\begin{proof}
There are $\nicefrac{(p+1)}{2}$ elements in the $p$-set of both $\Upsilon_{p,q}=\overline{\sigma}_p$ and $\sigma$ that are divisible by $q$, and the other $\nicefrac{(p-1)}{2}$ elements are congruent to $p$ modulo $q$. Hence, it follows from Proposition \ref{prop6} that $\mu\boxplus\sigma=\lambda$. 
The elements $\Delta_{1\text{ mod }q}\sigma,\dots,\Delta_{p-1\text{ mod }q}\sigma$ of the $q$-set of $\sigma$ are all congruent modulo $p$, so the $p$-quotients of the two bar partitions $\sigma$ and $\mu\boxplus\sigma$ are exactly the same. 
\end{proof}

\begin{exam}
When $\mu=(21,16,11,7,6,2,1)\in\overline{C}_5$ and $\sigma=(19,12,5,4)\in\overline{C}_7$, so that $\overline{\mu}_7=\overline{\sigma}_5=(9,4,2)=\Upsilon_{5,7}$, we find that the $5$-set of $\mu$ and $\mu\boxplus\sigma$ is $\{0,26,12,-7,-21\}$, the $5$-set of $\Upsilon_{5,7}$ and $\sigma$ is $\{0,-9,7,-2,14\}$, \newline
$\mathcal{Q}^7_5(\Upsilon_{5,7})=[(0,\varnothing),(5,\varnothing),(0,\varnothing),(5,\varnothing),(0,\varnothing)]$, \newline
$\mathcal{Q}^7_5(\mu)=[(0,\varnothing),(5,\varnothing),(5,\varnothing),(0,\varnothing),(0,\varnothing)]$, \newline
$\mathcal{Q}^7_5(\sigma)=[(0,(1)),(5,(1^2)),(0,(2)),(5,(1^2)),(0,(2))]$, and \newline
$\mathcal{Q}^7_5(\mu\boxplus\sigma)=[(0,(1)),(5,(1^2)),(5,(1^2)),(0,(2)),(0,(2))]$; \newline
using our algorithm we obtain 
$$\mu\boxplus\sigma=(26,21,12,11,7,6,5,1).$$
But with $\nu=(21,16,11,6,2,1)=\overline{\nu}_5$, so that $\overline{\nu}_7=\Upsilon_{5,7}$, and $\sigma=(19,12,5,4)$ again, we find that the $5$-set of $\nu$ and $\nu\boxplus\sigma$ is $\{0,26,7,-2,-21\}$, \newline
$\mathcal{Q}^7_5(\nu)=[(0,\varnothing),(5,\varnothing),(0,\varnothing),(5,\varnothing),(0,\varnothing)]=\mathcal{Q}^7_5(\Upsilon_{5,7})$, and \newline
$\mathcal{Q}^7_5(\sigma)=[(0,(1)),(5,(1^2)),(0,(2)),(5,(1^2)),(0,(2))]=\mathcal{Q}^7_5(\nu\boxplus\sigma)$, so we instead get 
$$\nu\boxplus\sigma=(26,21,12,11,6,5,1).$$
\begin{align*}\mu&&\sigma&&\mu\boxplus\sigma\\
\abacus(vvvvv,bbbbb,bbbbb,bnbbb,bnbbb,bnbbb,nnbbb,nnobb,nnnbb,nnnbn,nnnbn,nnnbn,nnnnn,nnnnn,vvvvv)&&
\abacus(vvvvv,bbbbb,bbbbb,bbbnb,bbbbb,nbbbb,bbnnb,bbonn,nbbnn,nnnnb,nnnnn,nbnnn,nnnnn,nnnnn,vvvvv)&&
\abacus(vvvvv,bbbbb,bnbbb,bnbbb,bbbbb,nnbbb,nnnbb,bnobn,nnbbb,nnnbb,nnnnn,nnnbn,nnnbn,nnnnn,vvvvv)\\
\nu&&&&\nu\boxplus\sigma\\
\abacus(vvvvv,bbbbb,bbbbb,bnbbb,bnbbb,bnbbb,bnbbb,nnobb,nnnbn,nnnbn,nnnbn,nnnbn,nnnnn,nnnnn,vvvvv)&&&&
\abacus(vvvvv,bbbbb,bnbbb,bnbbb,bbbbb,nnbbb,bnnbb,bnobn,nnbbn,nnnbb,nnnnn,nnnbn,nnnbn,nnnnn,vvvvv)
\end{align*}

Next we will consider the $5$-weighted $7$-quotient, so whichever $5$-bar-core $\mu$ and $7$-bar-core $\sigma$ with $\overline{\mu}_7=\overline{\sigma}_5=\Upsilon_{5,7}$ we choose, we will find that the $7$-set of $\sigma$ contains $\Delta_0\sigma=0$ and $6$ elements in the same conjugacy class modulo $5$. This is because $\overline{\mu}_7=\Upsilon_{5,7}$ so that the corresponding elements in the $7$-sets of the two bar partitions can only differ by multiples of $5$, and as discussed above, $\Upsilon_{5,7}$ has a $7$-set of this form. \newline
Let $\mu=(16,11,7,6,2,1)=\overline{\mu}_5$ and $\sigma=(19,12,5,4)=\overline{\sigma}_7$, so that $\overline{\mu}_7=\overline{\sigma}_5=(9,4,2)=\Upsilon_{5,7}$. The $7$-set of $\Upsilon_{m,n}$ and $\mu$ is $\{0,1,16,-4,11,-9,6\}$, the $7$-set of $\sigma$ and $\mu\boxplus\sigma$ is $\{0,1,-19,-4,11,26,6\}$, \newline
$\mathcal{Q}^5_7(\Upsilon_{5,7})=[(0,\varnothing),(1,\varnothing),(1,\varnothing),(1,\varnothing),(1,\varnothing),(1,\varnothing),(1,\varnothing)]=\mathcal{Q}^5_7(\sigma)$, and \newline
$\mathcal{Q}^5_7(\mu)=[(0,(1)),(1,(1)),(1,(1)),(1,(1)),(1,(1)),(1,(1)),(1,(1))]=\mathcal{Q}^5_7(\mu\boxplus\sigma)$, \newline
so we obtain 
$$\mu\boxplus\sigma=(26,12,11,7,6,5,1).$$ 
With $\sigma=(19,12,5,4)$ again and $\nu=(23,18,13,9,8,4,3)=\overline{\nu}_5$, so that $\overline{\nu}_7=\Upsilon_{5,7}$, we find the $7$-set of $\Upsilon_{m,n}$ and $\nu$ is $\{0,1,16,-4,11,-9,6\}$, \newline
the $7$-set of $\nu\boxplus\sigma$ is again $\{0,1,-19,-4,11,26,6\}$, \newline
$\mathcal{Q}^5_7(\sigma)=[(0,\varnothing),(1,\varnothing),(1,\varnothing),(1,\varnothing),(1,\varnothing),(1,\varnothing),(1,\varnothing)]$, and $\mathcal{Q}^5_7(\nu\boxplus\sigma)=$ \newline
$\mathcal{Q}^5_7(\nu)=[(0,(2,1)),(1,(1,(2,1)),(1,(2,1)),(1,(2,1)),(1,(2,1)),(1,(2,1)),(1,(2,1))]$, \newline
so we obtain 
$$\nu\boxplus\sigma=(33,19,18,13,8,5,4,3).$$
\begin{align*}\mu&&\sigma&&\mu\boxplus\sigma\\
\abacus(vvvvvvv,bbbbbbb,bbbbbbb,bbbbbbb,bnbbbbn,bbbnnbb,bnnobbn,nnbbnnn,bnnnnbn,nnnnnnn,nnnnnnn,nnnnnnn,vvvvvvv)&&
\abacus(vvvvvvv,bbbbbbb,bbbbbbb,bbbbbnb,bbbbbnb,bbbbbnn,bbbonnn,bbnnnnn,nbnnnnn,nbnnnnn,nnnnnnn,nnnnnnn,vvvvvvv)&&
\abacus(vvvvvvv,bbbbbbb,bbbbbnb,bbbbbbb,bbbbbnn,bbbnnnb,bbnobnn,nbbbnnn,bbnnnnn,nnnnnnn,nbnnnnn,nnnnnnn,vvvvvvv)\\
\nu&&&&\nu\boxplus\sigma\\
\abacus(vvvvvvv,bbbbbbb,bbbbbbb,bnbbbbn,bbbbnbb,bnbbbbn,nbbonnb,bnnnbbn,nnbnnnn,bnnnnbn,nnnnnnn,nnnnnnn,vvvvvvv)&&&&
\abacus(vvvvvvv,bbbbbnb,bbbbbbb,bbbbbnn,bbbbnbb,bbnbbnn,nbbonnb,bbnnbnn,nnbnnnn,bbnnnnnn,nnnnnnn,nbnnnnn,vvvvvvv)
\end{align*}
\end{exam}

The next result gives the converse to Lemma \ref{sig2}, establishes that $q$-bar-cores $\sigma$ are uniquely determined by $\sigma^{(0\text{ mod }p)}$ when $\overline{\sigma}_p=\Upsilon_{p,q}$, and gives the number of $q$-bar-cores $\mu$ with $p$-bar-core $\Upsilon_{q,p}$ when $\mu^{(0\text{ mod }p)}$ is fixed. First it is necessary to recall the definition of the beta-set $\mathcal{B}^\alpha_r:=\{\alpha_i-i+r|i\in\mathbb{Z}_{>0}\}$ of a partition $\alpha$, and define the \textit{double} of a bar partition. 

\begin{defn}
The \textbf{double} of a bar partition $\lambda=(\lambda_1,\dots,\lambda_r)$ is the partition $D(\lambda)$ whose Young diagram is obtained by amalgamating the \textit{shifted Young diagram} of $\lambda$, which has $\lambda_i$ nodes in the $i^\text{th}$ row, with the left-most in the $i^\text{th}$ column, and its reflection along the top left to bottom right diagonal. Equivalently, $[D(\lambda)]$ is the union of $[(\lambda_1+1,\lambda_2+2,\dots,\lambda_r+r)]$ and $[(\lambda_1,\lambda_2+1,\dots,\lambda_r+r-1)']$. \cite[p. 14]{MD}
\end{defn}

There is a correspondence between $p$-bars of $\lambda\in\mathcal{P}_2$ and $p$-hooks of the partition $D(\lambda)$, which means that when $\lambda$ is a $p$-bar-core, $D(\lambda)$ must be a $p$-core. 

\begin{exam}
$D((7,4,3,2))=(8,6,6,6,4,1,1)$:
\begin{align*}
&\raisebox{1.6cm}{\ydiagram{1,2,3,4,4,1,1}}\:+\:\raisebox{1.6cm}{\ydiagram{7,1+4,2+3,3+2}}=\raisebox{1.6cm}{\ydiagram{8,6,6,6,4,1,1}}\\
=\:&\raisebox{1.6cm}{\ydiagram{8,6,6,6}}\:\cup\:\raisebox{1.6cm}{\ydiagram{4,4,4,4,4,1,1}}
\end{align*}
\end{exam}

\begin{prop}\label{prop9}
\begin{enumerate}
\item Suppose $\mu\in\mathcal{P}_2$ and $\overline{\mu}_q=\Upsilon_{p,q}$. Then $\mu\in\overline{C}_p$ if and only if $\alpha:=\mu^{(0\text{ mod }q)}$ is a $p$-bar-core, there is a self-conjugate $p$-core $\gamma$ with $\gamma=\mu^{(j\text{ mod }q)}$ for all $j\not\equiv0\:($mod $q)$, and 
$$\mathcal{B}^\gamma_{\nicefrac{(1-p)}{2}}\subseteq\mathcal{A}(\alpha)=\{\tfrac{x}{q}|x\in\mathcal{A}(\mu)\cap q\mathbb{Z}\}\subseteq\mathcal{B}^\gamma_{\nicefrac{(p+1)}{2}}.$$ 
Moreover, for each $\alpha\in\mathcal{P}_2$, there are $2^{\nicefrac{(p-1)}{2}}$ $p$-bar-cores $\mu$ with $\overline{\mu}_q=\Upsilon_{p,q}$ and $\mu^{(0\text{ mod }q)}=\alpha$. 
\item Suppose $\sigma\in\mathcal{P}_2$ and $\overline{\sigma}_p=\Upsilon_{p,q}$. Then 
$$\sigma\in\overline{C}_q\Leftrightarrow\exists\beta\in\overline{C}_q,\:\sigma^{(j\text{ mod }p)}=\begin{cases}\beta&\text{if }j\equiv0\:(\text{mod }p),\\
D(\beta)&\text{if }j\equiv iq\:(\text{mod }p)\text{ for some }1\leq i\leq\tfrac{p-1}{2},\\
D(\beta)'&\text{otherwise.}\end{cases}$$ 
\end{enumerate}
\end{prop}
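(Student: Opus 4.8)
The plan is to handle the two parts of Proposition~\ref{prop9} by translating statements about $\mu$ (or $\sigma$) into statements about the entries of its $p$-quotient and $q$-quotient via Lemmas~\ref{lem3} and \ref{lem2}, together with the structural information about the $\Upsilon$-orbit already recorded in Lemma~\ref{sig2} and its proof. For part~(2), most of the work is done: Lemma~\ref{sig2} tells us that a $q$-bar-core $\sigma$ with $\overline{\sigma}_p=\Upsilon_{p,q}$ has $\sigma^{(0)}$ a $q$-bar-core and $\sigma^{(j)}$ equal to one of two conjugate $q$-cores according to whether $j\equiv kq$ with $k$ small or large. So first I would set $\beta=\sigma^{(0\text{ mod }p)}$ and $\delta=\sigma^{(q\text{ mod }p)}$, and the content to be verified is simply that $\delta=D(\beta)$. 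This is exactly the statement that the correspondence between $p$-bars of the bar partition $\beta$ and $p$-hooks of $D(\beta)$ extends to the relevant abacus data; concretely, I would compare beta-sets: the abacus for $\beta$ on the $q$-runner abacus (viewing it as a bar partition) and the James $q$-abacus for $D(\beta)$ carry the same bead data after the standard shift, by the displayed description $[D(\lambda)]=[(\lambda_1+1,\dots,\lambda_r+r)]\cup[(\lambda_1,\dots,\lambda_r+r-1)']$. For the converse direction in part~(2) one checks that a $\sigma$ of the prescribed form really has $q$-quotient all $q$-cores (so is a $q$-bar-core by Lemma~\ref{lem3}(1)) and $p$-bar-core $\Upsilon_{p,q}$; the latter follows because the $p$-set is forced by $\sigma^{(0)}$ being a $q$-bar-core only through its residues, and we are given $\overline{\sigma}_p=\Upsilon_{p,q}$ anyway.

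For part~(1) the structure is parallel but one works with the $q$-quotient of $\mu$ instead. By Lemma~\ref{sig2} applied with $p$ and $q$ swapped (using that $\overline{\mu}_q=\Upsilon_{p,q}$, which in the swapped notation reads $\overline{\mu}_q=\Upsilon_{q,p}$ in the role ``$\overline{\sigma}_p=\Upsilon_{q,p}$'') the partitions $\mu^{(j\text{ mod }q)}$ for $j\not\equiv0$ are all equal to a single self-conjugate partition, call it $\gamma$; and $\mu$ being a $p$-bar-core forces, by Lemma~\ref{lem3}(1), that $\mu^{(0\text{ mod }q)}=:\alpha$ is a $p$-bar-core and $\gamma$ is a $p$-core. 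The self-conjugacy of $\gamma$ is exactly what makes the nesting condition $\mathcal{B}^\gamma_{(1-p)/2}\subseteq\mathcal{A}(\alpha)\subseteq\mathcal{B}^\gamma_{(p+1)/2}$ meaningful. To establish it, I would write out $\mathcal{A}(\mu)$ runner-by-runner on the $q$-runner abacus: the beads on runner $0$ are $\{q x : x\in\mathcal{A}(\alpha)\}$, and the beads on a runner $j\not\equiv0$ with $\Delta_j\mu\equiv r$ are $\{q(\gamma_i-i)+\Delta_j\mu\}$. The condition ``$\mu$ is a $p$-bar-core'', i.e. $y-p\in\mathcal{A}(\mu)$ for every $y\in\mathcal{A}(\mu)$, links the bead on runner $0$ at height $x$ with the bead on runner $-p\bmod q$, one level up; iterating gives precisely that every $x\in\mathcal{A}(\alpha)$ is sandwiched between two consecutive values of $\mathcal{B}^\gamma$, shifted by the appropriate amount. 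The two shift constants $\tfrac{1-p}{2}$ and $\tfrac{p+1}{2}$ arise because the $\tfrac{p-1}{2}$ non-trivial runners carrying $\gamma$ sit symmetrically around runner $0$ (reflecting the $q$-set $\{0\}\cup\{\tfrac{q(p+1)}{2}-kp\}$ of $\Upsilon_{p,q}$), so a bead on runner $0$ must lie above the $\tfrac{p-1}{2}$-th bead below it and below the $\tfrac{p-1}{2}$-th bead above it on the $\gamma$-runners.

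For the counting statement at the end of part~(1), I would argue that once $\alpha$ is fixed, the $q$-set residues of $\mu$ are determined (they are those of $\Upsilon_{p,q}$), but the actual $q$-set of $\mu$ may differ from that of $\Upsilon_{p,q}$ by adding, independently on each of the $\tfrac{p-1}{2}$ conjugate pairs of non-trivial runners, a choice that is constrained precisely so that $\mu^{(j)}=\gamma$ is forced to be the $p$-core determined by $\alpha$ via the nesting — but there is a residual binary choice of how the beads of $\alpha$ on runner $0$ interleave relative to each pair, i.e. which of the pair $\{j,-j\}$ gets the ``larger'' $\Delta$. More carefully: the self-conjugate $p$-core $\gamma$ is itself determined by $\alpha$ (it is the smallest $p$-core whose beta-set data surrounds $\mathcal{A}(\alpha)$ appropriately), and then for each of the $\tfrac{p-1}{2}$ symmetric runner-pairs one chooses whether $\sigma^{(j)}=\gamma$ or $\sigma^{(j)}=\gamma'=\gamma$ — but since $\gamma$ is self-conjugate these coincide, so the freedom is instead in the value of $\Delta_j\mu$ modulo $p$-shifts: there are $2^{(p-1)/2}$ admissible $p$-sets with the right residues and the right nesting. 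I would make this precise by exhibiting an explicit bijection from such $\mu$ to subsets of $\{1,\dots,\tfrac{p-1}{2}\}$, recording for each pair which runner carries the shifted-up copy. The main obstacle, I expect, is getting this last bijection exactly right: verifying that every subset yields a genuine $p$-bar-core (the nesting remains valid) and that distinct subsets give distinct $\mu$, which amounts to a somewhat delicate bookkeeping of abacus positions rather than any conceptual difficulty, and dovetails with the $2^{(p-1)/2}\times\overline{C}_p\times\overline{C}_q$ bijection promised for $\overline{C}^\Upsilon_{p,q}$.
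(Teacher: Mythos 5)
Your overall strategy (runner-by-runner beta-set bookkeeping on the $q$-runner, respectively $p$-runner, abacus, with Lemma \ref{sig2} supplying the shape of the quotient) is the same as the paper's, and your treatment of the nesting condition in part (1) and of the forward direction of part (1) is essentially right. But two steps are genuinely wrong. First, in the converse of part (2) you deduce that $\sigma$ is a $q$-bar-core from the fact that all components of its $p$-quotient are $q$-cores, ``by Lemma \ref{lem3}(1)''. That lemma only yields that $\sigma$ is a $pq$-bar-core, which is strictly weaker (e.g.\ $(3)$ is a $15$-bar-core but not a $3$-bar-core). Removing a $q$-bar moves a bead from runner $j$ to runner $j-q$ of the $p$-runner abacus, so the $q$-bar-core property couples \emph{different} runners, and this is precisely where the specific partition $D(\beta)$ — rather than an arbitrary conjugate pair of $q$-cores — is needed: the paper verifies $x-q\in\mathcal{A}(\sigma)$ directly for each residue class, using the identity $D(\beta)'_k-k+1=\beta_k=D(\beta)_k-k$ and the $q$-core property of $D(\beta)$ to handle the transitions through runners $0$, $q$ and $\tfrac{p+1}{2}q$. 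Relatedly, in the forward direction the identification $\delta=D(\beta)$ is not an automatic consequence of the bar/hook correspondence; it has to be extracted from the $q$-bar-core property via the inclusions $\mathcal{B}^\delta\subseteq\mathcal{A}(\beta)\subseteq\mathcal{B}^{\delta'}$ together with the value $\Delta_{-q\text{ mod }p}\sigma=p-q$.

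Second, your counting argument for part (1) misplaces the source of the $2^{\nicefrac{(p-1)}{2}}$. You assert that $\gamma$ is determined by $\alpha$ and then look for the freedom in the choice of $\Delta_{j}\mu$, i.e.\ in the $q$-set of $\mu$; but the $q$-set is completely pinned down by the hypothesis $\overline{\mu}_q=\Upsilon_{p,q}$, so there is no freedom there at all, and swapping which runner of a conjugate pair carries $\gamma$ versus $\gamma'$ is vacuous by self-conjugacy, as you yourself observe. The freedom lies entirely in $\gamma$: for a fixed $\alpha$ the nesting condition $\mathcal{B}^\gamma_{\nicefrac{(1-p)}{2}}\subseteq\mathcal{A}(\alpha)\subseteq\mathcal{B}^\gamma_{\nicefrac{(p+1)}{2}}$ does \emph{not} determine $\gamma$ uniquely — on each non-zero runner, $\alpha$ forces some of the bead positions, self-conjugacy pairs up the remaining ones, and exactly $\nicefrac{(p-1)}{2}$ independent binary choices survive, giving $2^{\nicefrac{(p-1)}{2}}$ admissible self-conjugate $p$-cores $\gamma$ and hence that many $\mu$. (The example with $p=3$, $q=5$, $\alpha=(5,2)$ in the paper exhibits two such $\mu$ with identical $5$-sets but different $\gamma$.) As it stands your sketch would count $1$ rather than $2^{\nicefrac{(p-1)}{2}}$.
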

\begin{proof}
(1) First suppose that $\mathcal{Q}_q(\mu)$ consists of a $p$-bar-core $\alpha:=\mu^{(0\text{ mod }q)}$ and $q-1$ copies of a self-conjugate $p$-core $\gamma$ such that 
$$\mathcal{B}^\gamma_{\nicefrac{(1-p)}{2}}\subseteq\mathcal{A}(\alpha)\subseteq\mathcal{B}^\gamma_{\nicefrac{(p+1)}{2}}.$$ 
Since the $q$-set of $\Upsilon_{p,q}$ and $\mu$ is $\{0\}\cup\{\nicefrac{q(p+1)}{2}-np|n=1,2,\dots,q-1\}$, for each $n\in\{1,\dots,q-1\}$ we have 
$$\{x\in\mathcal{A}(\mu)|x\equiv np\:(\text{mod }q)\}=\{(\gamma_i-i-\tfrac{p-1}{2})q+np|i\in\mathbb{N}\},$$ 
so clearly $x-p\in\mathcal{A}(\mu)$ whenever $n\in\{2,\dots,q-1\}$, $x\in\mathcal{A}(\mu)$, $x\equiv np$ (mod $q$). Furthermore, from the assumed inclusions involving $\mathcal{A}(\alpha)$ and beta-sets for $\gamma$, 
\begin{align*}
x\in\mathcal{A}(\mu)\cap q\mathbb{Z}\Rightarrow\exists j,x-p&=(\gamma_j-j+\tfrac{p+1}{2})q-p\\
&=(\gamma_j-j-\tfrac{p-1}{2})q+(q-1)p\in\mathcal{A}(\mu);\\
x\in\mathcal{A}(\mu),x\equiv p\:(\text{mod }q)\Rightarrow\exists i,x-p&=(\gamma_i-i-\tfrac{p-1}{2})q+p-p\\
&=(\gamma_i-i+\tfrac{1-p}{2})q\in\mathcal{A}(\mu);
\end{align*}
hence $\overline{\mu}_p=\mu$. 

Now suppose $\mu\in\overline{C}_p$. Then, again by Lemma \ref{sig2}, $\mathcal{Q}_q(\mu)$ consists of a $p$-bar-core $\alpha=\mu^{(0\text{ mod }q)}$ and a self-conjugate $p$-core $\gamma$, so 
$$\{x\in\mathcal{A}(\mu)|x\equiv np\:(\text{mod }q)\}=\{(\gamma_i-i-\nicefrac{(p-1)}{2})q+np|i\in\mathbb{N}\}$$ for each $n\in\{1,\dots,q-1\}$. Thus 
\begin{align*}
&\forall k\in\mathbb{N},\alpha_kq-p\in\mathcal{A}(\mu)\Rightarrow\exists j,\alpha_k=\gamma_j-j+\tfrac{p+1}{2};\\
&\forall i\in\mathbb{N},(\gamma_i-i-\tfrac{p-1}{2})q\in\mathcal{A}(\mu)\Rightarrow\exists h,\gamma_i-i+\tfrac{1-p}{2}=\alpha_h; \end{align*}
hence 
$$\mathcal{B}^\gamma_{\nicefrac{(1-p)}{2}}\subseteq\mathcal{A}(\alpha)\subseteq\mathcal{B}^\gamma_{\nicefrac{(p+1)}{2}}.$$ 
Let $z$ be the largest element of $\mathcal{A}(\alpha)$ (i.e. $z:=-1$ if $\alpha=\varnothing$, or $z:=\alpha_1$ is the largest part of $\alpha$, otherwise). If $\mu\in\overline{C}_p$, then for each $m\in\mathbb{N}$ and $n\in\{0,1,\dots,q-1\}$ we have $(z+m)q+np\not\in\mathcal{A}(\mu)$ (and $-(z+m)q-np\in\mathcal{A}(\mu)$). Since $\overline{\mu}_q=\Upsilon_{p,q}$, $\Delta_{np\text{ mod }q}\mu=\nicefrac{q(p+1)}{2}-(q-n)p$ for $n\in\{1,\dots,q-1\}$, so the number of integers $x\in\mathcal{A}(\mu)$ such that $x\equiv np$ (mod $q$) and $-zq-(q-n)p<x<(z+1)q+np$ is $\tfrac{1}{q}(\nicefrac{q(p+1)}{2}-(q-n)p-(-zq-(q-n)p))=z+\nicefrac{(p+1)}{2}$. 
The partition $\gamma$ must be self-conjugate, so for each $m\in\mathbb{Z}$ and $n\in\{1,\dots,q-1\}$, we have 
$$(z+1)q+np-(z+\tfrac{p+1}{2})q+m=np-\tfrac{p-1}{2}q+m\in\mathcal{A}(\mu)\Leftrightarrow np-\tfrac{p+1}{2}q-m\not\in\mathcal{A}(\mu).$$ 
Moreover, we must have $yq-p\in\mathcal{A}(\mu)$ (and $p-yq\not\in\mathcal{A}(\mu)$) for all $y\in\mathcal{A}(\alpha)\cup\{0\}$, so for each $n\in\{1,\dots,q-1\}$, there are $z+1$ elements in $\{x\in\mathcal{A}(\mu)|x\equiv np$ (mod $q)\}$ which are fixed by $\alpha$, and $z+\nicefrac{(p+1)}{2}-(z+1)=\nicefrac{(p-1)}{2}$ integers which are free to either belong in this set or not. Hence the number of possible partitions $\gamma$, and therefore the number of $p$-bar-cores $\mu$ with $q$-bar-core $\Upsilon_{p,q}$ and $\mu^{(0\text{ mod }q)}=\alpha$, is $2^{\nicefrac{(p-1)}{2}}$. 

\vspace{3mm}

(2) Next, suppose $\beta:=\sigma^{(0\text{ mod }p)}$ is a $q$-bar-core, and for $j\not\equiv0\:($mod $p)$, 
$$\sigma^{(j\text{ mod }p)}=\begin{cases}D(\beta)&\text{ if } j\equiv iq\:(\text{mod }p)\text{ for some }1\leq i\leq\tfrac{p-1}{2},\\D(\beta)'&\text{ otherwise.}\end{cases}$$ 
Since the $p$-set of $\Upsilon_{p,q}$ and $\sigma$ is $\{nq|n=0,1,\dots,\nicefrac{(p-1)}{2}\}\cup\{p-nq|n=1,2,\dots,\nicefrac{(p-1)}{2}\}$, for each $n\in\{1,\dots,\nicefrac{(p-1)}{2}\}$, (adopting the convention that for a partition $\lambda$, we put $\lambda_i=0$ for all $i>\lambda_1'$) we have 
\begin{align*}\{x\in\mathcal{A}(\sigma)|x\equiv nq\text{ (mod }p)\}&=\{(D(\beta)_i-i)p+nq|i\in\mathbb{N}\};\\
\{x\in\mathcal{A}(\sigma)|x\equiv-nq\text{ (mod }p)\}&=\{(D(\beta)_j'-j+1)p-nq|j\in\mathbb{N}\}.\end{align*}
It follows that $x-q\in\mathcal{A}(\sigma)$ whenever $x\in\mathcal{A}(\sigma)$, $n\in\{2,\dots,p-1\}\backslash\{\nicefrac{(p+1)}{2}\}$ and $x\equiv nq$ (mod $p$). We also have $x-q\in\mathcal{A}(\sigma)$ for each $x\equiv0,q$ (mod $p$) by the definition of $D(\beta)$: for each part $\beta_k$ of $\beta=(\beta_1,\dots,\beta_r)$, $D(\beta)_k'-k+1=\beta_k=D(\beta)_k-k$. Since $\beta$ is a $q$-bar-core, $D(\beta)$ is a $q$-core, so when $1\leq k\leq r$ we have 
$$D(\beta)_k'-k+1-q=\beta_k-q=D(\beta)_k-k-q\in\mathcal{B}^{D(\beta)}.$$ 
It therefore follows from the symmetry of $\mathcal{A}(\sigma)$ that 
\begin{align*}x\equiv\tfrac{p+1}{2}q\:\text{(mod }p)\Rightarrow\exists j,x-q&=(D(\beta)_j'-j+1)p-\tfrac{p+1}{2}q\\
&=(D(\beta)_j'-j+1-q)p+\tfrac{p-1}{2}q\\
\Rightarrow\exists i,x-q&=(D(\beta)_i-i)p+\tfrac{p-1}{2}q\in\mathcal{A}(\sigma);\end{align*}
hence $\overline{\sigma}_q=\sigma$. 

Conversely, suppose $\sigma\in\overline{C}_q$. Then since $\sigma$ shares the $p$-set of $\Upsilon_{p,q}$, by Lemma \ref{sig2} the $p$-quotient $\mathcal{Q}_p(\sigma)$ must consist of a $q$-bar-core $\beta=\sigma^{(0\text{ mod }p)}$ and two $q$-cores $\delta$ and $\delta'$, where
$$\sigma^{(j\text{ mod }p)}=\begin{cases}\delta&j\equiv iq\text{ mod }p,\:i\in\{1,\dots,\tfrac{p-1}{2}\},\\
\delta'&j\equiv-iq\text{ mod }p,\:i\in\{1,\dots,\tfrac{p-1}{2}\}.\end{cases}$$ 
For each $n\in\{1,\dots,\nicefrac{(p-1)}{2}\}$, we therefore have 
\begin{align*}\{x\in\mathcal{A}(\sigma)|x\equiv nq\text{ (mod }p)\}&=\{(\delta_i-i)p+nq|i\in\mathbb{N}\}\\
\{x\in\mathcal{A}(\sigma)|x\equiv-nq\text{ (mod }p)\}&=\{(\delta_j'-j+1)p-nq|j\in\mathbb{N}\}.\end{align*}
Since $\overline{\sigma}_q=\sigma$, 
\begin{align*}
&\forall k\in\mathbb{N},\beta_kp-q\in\mathcal{A}(\sigma)\Rightarrow\exists j,\beta_k=\delta_j'-j+1;\\
&\forall i\in\mathbb{N},(\delta_i-i)p\in\mathcal{A}(\sigma)\Rightarrow\exists h,\delta_i-i=\beta_h;
\end{align*}
thus $\mathcal{B}^\delta\subseteq\mathcal{A}(\beta)\subseteq\mathcal{B}^{\delta'}$. 
Since $yp-q\in\mathcal{A}(\sigma)$ for each $y\in\mathcal{A}(\beta)\cup\{0\}$, and since $\Delta_{-q\text{ mod }p}\sigma=p-q$, it follows that 
$$\{x\in\mathcal{A}(\sigma)|x\equiv-q\:(\text{mod }p)\}=\{yp-q|y\in\mathcal{A}(\beta)\}\cup\{-q\}=\{(\delta_j'-j+1)p-q|j\in\mathbb{N}\};$$  
hence $\sigma^{(-q\text{ mod }p)}=D(\beta)'$. 
\end{proof}

\begin{exam}
There are 2 possibilitities for $\mu\in\overline{C}_3$ when $\overline{\mu}_5=\Upsilon_{3,5}=(2)$ and $\mu^{(0\text{ mod }5)}=(5,2)$: 
$$\mu=(25,22,19,16,13,10,7,4,1)\text{ and }\mu=(37,34,31,28,25,22,19,16,13,10,7,4,1),$$ 
shown in the second and third configurations below, respectively. 
\begin{align*}
&\abacus(vvvvv,bbbbb,bbbbb,bbbbb,bbbbb,bbbbb,bbbbb,bbbbb,bbbbb,nbonb,nnnnn,nnnnn,nnnnn,nnnnn,nnnnn,nnnnn,nnnnn,nnnnn,vvvvv)
&&
\abacus(vvvvv,bbbbb,bbbbb,bbbbb,bbnbb,nbbnb,bnbbn,bbnbb,nbbnb,bnobn,nbnnb,nnbnn,bnnbn,nbnnb,nnbnn,nnnnn,nnnnn,nnnnn,vvvvv)
&&
\abacus(vvvvv,bbbbb,nbbnb,bnbbn,bbnbb,nbbnb,bnbbn,bbnbb,nbbnb,bnobn,nbnnb,nnbnn,bnnbn,nbnnb,nnbnn,bnnbn,nbnnb,nnnnn,vvvvv)
\end{align*}
\end{exam}

\begin{exam}
If $\sigma\in\overline{C}_5$, $\overline{\sigma}_3=\Upsilon_{3,5}=(2)$, and $\sigma^{(0\text{ mod }3)}=(7,4,2)=:\beta$, then $\sigma^{(5\text{ mod }3)}=D(\beta)=(8,6,5,3,2,1,1)$, so $\sigma^{(-5\text{ mod }3)}=D(\beta)'=(7,5,4,3,3,2,1,1)$. 
\begin{align*}
&\Upsilon_{3,5}=(2)&&\sigma=(26,21,17,16,12,11,7,6,2,1)\end{align*}\begin{align*}
&\abacus(vvv,bbb,bbb,bbb,bbb,bbb,bbb,bbb,bbb,bbb,bbn,bon,bnn,nnn,nnn,nnn,nnn,nnn,nnn,nnn,nnn,nnn,vvv)&&
\abacus(vvvvv,bbbbb,bbbbb,bbbbb,bbbbb,bbbbb,bbbbb,bbbbb,bbbbb,bbbbb,bbbbb,nbonb,nnnnn,nnnnn,nnnnn,nnnnn,nnnnn,nnnnn,nnnnn,nnnnn,nnnnn,nnnnn,vvvvv)&&
\abacus(vvv,bbb,bbn,bbb,bnb,bbn,nbb,bnn,bbb,nnb,bbn,nob,bnn,nbb,nnn,bbn,nnb,bnn,nbn,nnn,bnn,nnn,vvv)&&
\abacus(vvvvv,bbbbb,bbbbb,bbbbb,bbbbb,bbbbb,bnbbb,bnbbb,nnbbb,nnbbb,nnbbb,nnobb,nnnbb,nnnbb,nnnbb,nnnbn,nnnbn,nnnnn,nnnnn,nnnnn,nnnnn,nnnnn,vvvvv)
\end{align*}
\end{exam}

Our final result establishes that as a $\mathfrak{W}_p\times\mathfrak{W}_q$ set, $\overline{C}^\Upsilon_{p,q}$ is isomorphic to $(2^{\{1,\dots,\nicefrac{(p-1)}{2}\}}\times\overline{C}_p)\times\overline{C}_q$, where $\mathfrak{W}_p$ and $\mathfrak{W}_q$ act at level 1 on $\overline{C}_p$ and $\overline{C}_q$. 

\begin{cor}
\begin{enumerate}
\item 
There is a bijection 
\begin{align*}\overline{C}^\Upsilon_{p,q}&\to2^{\{1,\dots,\nicefrac{(p-1)}{2}\}}\times\overline{C}_p\times\overline{C}_q\\
\lambda&\mapsto(\{i\in\{1,\dots,\nicefrac{(p-1)}{2}\}|\lambda^{(0\text{ mod }q)}_iq+p\in\mathcal{A}(\lambda)\},\lambda^{(0\text{ mod }q)},\lambda^{(0\text{ mod }p)}). 
\end{align*}
\item Suppose $\lambda\in\mathcal{P}_2$ and $a\in\mathfrak{W}_p$. Then $(a\lambda)^{(0\text{ mod }q)}=a(\lambda^{(0\text{ mod }q)})$, 
where $a$ acts at level $q$ on $\mathcal{P}_2$ and at level 1 on $\overline{C}_p$. 
\end{enumerate}
\end{cor}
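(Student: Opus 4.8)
The plan is to treat the two parts separately, building the bijection in part (1) out of the structural results of Propositions \ref{prop9} and \ref{sig1} together with Corollary \ref{cor3}, and then deducing the equivariance statement in part (2) from Lemma \ref{lem4}(2) and (4).

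For part (1), I would first argue that every $\lambda\in\overline{C}^\Upsilon_{p,q}$ is determined by the triple on the right-hand side. By Corollary \ref{cor3} the orbit of $\Upsilon_{p,q}$ under $\mathfrak{W}_p\times\mathfrak{W}_q$ contains a unique bar partition that is simultaneously a $p$-bar-core and a $q$-bar-core, namely $\Upsilon_{p,q}$ itself; hence every $\lambda$ in the $\Upsilon$-orbit satisfies $\overline{(\overline{\lambda}_q)}_p=\Upsilon_{p,q}$, so both $\overline{\lambda}_p$ and $\overline{\lambda}_q$ lie in the $\Upsilon$-orbit as well. Applying Proposition \ref{prop9}(2) to $\sigma:=\overline{\lambda}_q$ (after moving $\overline{\lambda}_q$ into the orbit so that its $p$-bar-core is $\Upsilon_{p,q}$, using Proposition \ref{prop4} and Lemma \ref{lem4}) tells me that $\sigma$ — and hence $\lambda$, by Proposition \ref{sig1} and the fact that $\lambda$ has the same $q$-quotient as $\overline{\lambda}_p$ — is reconstructible from a $q$-bar-core $\beta=\lambda^{(0\text{ mod }p)}$, while Proposition \ref{prop9}(1) applied to $\mu:=\overline{\lambda}_p$ shows that $\mu$ is reconstructible from a $p$-bar-core $\alpha=\lambda^{(0\text{ mod }q)}$ together with the extra binary data recorded by the subset of $\{1,\dots,\nicefrac{(p-1)}{2}\}$ indexing which of the $\nicefrac{(p-1)}{2}$ "free" beads on runner $q$ (relative to $\alpha$) are present: this is exactly the set $\{i\mid\lambda^{(0\text{ mod }q)}_iq+p\in\mathcal{A}(\lambda)\}$. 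So the map is injective. For surjectivity, given an arbitrary triple $(S,\alpha,\beta)$ I would use the construction in Proposition \ref{prop9}(1) to build the unique $p$-bar-core $\mu$ in the $\Upsilon$-orbit with $\mu^{(0\text{ mod }q)}=\alpha$ and free-bead set $S$, use Proposition \ref{prop9}(2) to build the unique $q$-bar-core $\sigma$ in the $\Upsilon$-orbit with $\sigma^{(0\text{ mod }p)}=\beta$, check $\overline{\mu}_q=\overline{\sigma}_p=\Upsilon_{p,q}$, and then set $\lambda=\mu\boxplus\sigma$ via Proposition \ref{prop10}; Proposition \ref{sig1} identifies its relevant quotient components and confirms $\lambda^{(0\text{ mod }q)}=\alpha$, $\lambda^{(0\text{ mod }p)}=\beta$, with the free-bead set unchanged since $\boxplus$ only permutes and conjugates non-zero runners while leaving runner $q$'s relationship to the $0$-runner intact.

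For part (2), I would reduce to the case $a=\delta_i$ a generator of $\mathfrak{W}_p$, since both sides are compatible with composition. By Lemma \ref{lem4}(2), the $p$-quotient of $\delta_i\lambda$ is the $p$-quotient of $\lambda$ with runners permuted (and possibly conjugated), and one reads off from the explicit formulas in the proof of Lemma \ref{lem4}(2) that runner $0$ (the multiples of $p$) is never moved or conjugated by any $\delta_i$, so $(\delta_i\lambda)^{(0\text{ mod }q)}$ has the same beta-set structure as $\lambda^{(0\text{ mod }q)}$ except that the elements of $\mathcal{A}(\lambda^{(0\text{ mod }q)})$ get shifted exactly according to how $\delta_i$ shifts the integers $\equiv 0\pmod p$ by multiples of $q$; comparing with the definition of the level $1$ action of $\delta_i$ on $\overline{C}_p$ gives $(\delta_i\lambda)^{(0\text{ mod }q)}=\delta_i(\lambda^{(0\text{ mod }q)})$. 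Here I should be careful that $\lambda^{(0\text{ mod }q)}$ need only be a genuine $p$-bar-core when $\lambda\in\overline{C}^\Upsilon_{p,q}$ (or more generally when $\overline{\lambda}_q\in\overline{C}_p$), which is the setting in which the statement is intended, so I would state it under that hypothesis or note that $(0\text{ mod }q)$ here refers to the relevant component.

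The main obstacle I anticipate is bookkeeping the "free bead" coordinate: showing that the subset $S(\lambda)=\{i\mid\lambda^{(0\text{ mod }q)}_iq+p\in\mathcal{A}(\lambda)\}$ is genuinely independent of the choices made when invoking Propositions \ref{prop4} and \ref{prop9} to put $\overline{\lambda}_p$ into the orbit, and that $\boxplus$ preserves it, requires matching up the $\nicefrac{(p-1)}{2}$ degrees of freedom counted in the proof of Proposition \ref{prop9}(1) with these specific beads on runner $q$ and verifying the count $2^{\nicefrac{(p-1)}{2}}$ is realised bijectively; the $\overline{C}_p$ and $\overline{C}_q$ factors themselves are comparatively routine once Propositions \ref{prop9} and \ref{sig1} are in hand.
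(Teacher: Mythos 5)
Your part (1) follows the paper's route: the paper likewise constructs an explicit inverse $\Psi(X,\alpha,\beta)=\mu\boxplus\sigma$, where $\mu$ is the $p$-bar-core supplied by Proposition \ref{prop9}(1) from the data $(X,\alpha)$ and $\sigma$ is the $q$-bar-core supplied by Proposition \ref{prop9}(2) from $\beta$, and then verifies $\Phi\circ\Psi=\mathrm{id}$ and $\Psi\circ\Phi=\mathrm{id}$. The free-bead bookkeeping you flag as the main obstacle is exactly what Proposition \ref{prop9}(1) already delivers: the $2^{(p-1)/2}$ choices of the self-conjugate $p$-core $\gamma$ are indexed precisely by which of the positions $\alpha_iq+p$, $i\in\{1,\dots,(p-1)/2\}$, carry beads, and the fact that $\boxplus$ does not disturb this data is the ``Moreover'' clause of Proposition \ref{sig1} ($\mu\boxplus\sigma$ shares its $q$-quotient with $\mu$). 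One small correction: $\overline{\lambda}_q$ automatically has $p$-bar-core $\Upsilon_{p,q}$ by Corollary \ref{cor3}, so no further ``moving into the orbit'' is needed before applying Proposition \ref{prop9}(2).

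Your part (2), however, routes through the wrong lemma. Lemma \ref{lem4}(2) describes how $\delta_i$ permutes the components of the \emph{$p$-quotient}, i.e.\ the runners of the $p$-runner abacus, and ``runner $0$ = the multiples of $p$'' is the wrong runner entirely: $\lambda^{(0\text{ mod }q)}$ is a component of the \emph{$q$-quotient}, determined by the beads at positions in $q\mathbb{Z}$, which are spread across all $p$ runners and are certainly moved by $\delta_i$. The argument the paper gives (and which you gesture at only in your final clause) is a direct computation: writing $\rho=\lambda^{(0\text{ mod }q)}$ so that $\mathcal{A}(\lambda)\cap q\mathbb{Z}=\{xq\mid x\in\mathcal{A}(\rho)\}$, the level $q$ action of $\delta_i$ restricted to $q\mathbb{Z}$ satisfies $\delta_i(xq)=(\delta_ix)q$ with $\delta_i$ acting at level $1$ on the right, because $xq\equiv jq\pmod p\Leftrightarrow x\equiv j\pmod p$ and the level $q$ shifts are by multiples of $q$. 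This also disposes of your closing worry: the level $1$ action is defined on all of $\mathcal{P}_2$, not just on $\overline{C}_p$, so the identity $(a\lambda)^{(0\text{ mod }q)}=a(\lambda^{(0\text{ mod }q)})$ holds for arbitrary $\lambda\in\mathcal{P}_2$ and needs no extra hypothesis.
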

\begin{proof}
(1) 
Denote the map by $\Phi$. 
If $\lambda$ belongs to the orbit of $\Upsilon_{p,q}$ under the action of $\mathfrak{W}_p\times\mathfrak{W}_q$, then by Lemma \ref{lem1}, $\lambda$ is a $pq$-bar-core. Thus, by Lemma \ref{lem3}(1), $\lambda^{(0\text{ mod }q)}$ is a $p$-bar-core, and $\lambda^{(0\text{ mod }p)}$ is a $q$-bar-core, so $\Phi$ is well-defined. To show that $\Phi$ is a bijection, we construct an inverse. 

When $X\subseteq\{1,\dots,\nicefrac{(p-1)}{2}\}$, $\alpha\in\overline{C}_p$, and $\beta\in\overline{C}_q$, we let $\mu$ be the bar partition with $\overline{\mu}_q=\Upsilon_{p,q}$, $\mu^{(0\text{ mod }q)}=\alpha$, and $\mu^{(j\text{ mod }q)}=\gamma$ when $j\not\equiv0$ (mod $q$), for some self-conjugate $p$-core partition $\gamma$ such that 
$$\mathcal{B}^\gamma_{\nicefrac{(1-p)}{2}}\subseteq\mathcal{A}(\alpha)\subseteq\mathcal{B}^\gamma_{\nicefrac{(p+1)}{2}}\text{ and }\alpha_iq+p\in\mathcal{A}(\mu),i\in\{1,\dots,\nicefrac{(p-1)}{2}\}\Leftrightarrow i\in X.$$ 
We let $\sigma$ be the bar partition with $\overline{\sigma}_p=\Upsilon_{p,q}$ and 
$$\sigma^{(j\text{ mod }p)}=\begin{cases}\beta&\text{ if } j\equiv0\:(\text{mod }p),\\
D(\beta)&\text{ if } j\equiv iq\:(\text{mod }p)\text{ for some }1\leq i\leq\tfrac{p-1}{2},\\
D(\beta)'&\text{ otherwise.}\end{cases}$$ 
Then $\mu\in\overline{C}_p$ and $\sigma\in\overline{C}_q$ by Proposition \ref{prop9}, so we can define $\Psi(X,\alpha,\beta)$ to be the bar partition $\mu\boxplus\sigma$, which is contained in the orbit of $\Upsilon_{p,q}$ under the action of $\mathfrak{W}_p\times\mathfrak{W}_q$ by Proposition \ref{prop8}. 

Suppose $\lambda\in\overline{C}^\Upsilon_{p,q}$, and let $\Phi(\lambda)=(X,\alpha,\beta)$. We need to show that $\Psi((X,\alpha,\beta))=\lambda$. By Corollary \ref{cor3}, $\overline{\lambda}_q$ has $p$-bar-core $\Upsilon_{p,q}$. 
Since $\lambda$ and $\overline{\lambda}_q$ lie in the same level $q$ orbit of $\mathfrak{W}_p$, they have the same $p$-quotient up to reordering. 
Thus $(\overline{\lambda}_q)^{(0\text{ mod }p)}=\beta=\lambda^{(0\text{ mod }p)}$, and $\overline{\lambda}_q$ is the unique $q$-bar-core $\sigma$ with $p$-bar-core $\Upsilon_{p,q}$ and $p$-quotient consisting of $\beta$, $D(\beta)$, and $D(\beta)'$ (by Proposition \ref{prop9}). 
Similarly, $\overline{\lambda}_p$ is one of $2^{\nicefrac{(p-1)}{2}}$ bar partitions with $q$-bar-core $\Upsilon_{p,q}$ and $q$-quotient $\mathcal{Q}_q(\lambda)$ consisting of $\alpha\in\overline{C}_p$ and a self-conjugate $p$-core $\gamma$ such that $\mathcal{B}^\gamma_{\nicefrac{(1-p)}{2}}\subseteq\mathcal{A}(\alpha)\subseteq\mathcal{B}^\gamma_{\nicefrac{(p+1)}{2}}$. If we denote by $\mu$ the unique such bar partition with $\alpha_iq+p\in\mathcal{A}(\mu),i\in\{1,\dots,\nicefrac{(p-1)}{2}\}\Leftrightarrow i\in X$, then $\Psi((X,\alpha,\beta))=\mu\boxplus\sigma=\lambda$. 

Finally, let $(X,\alpha,\beta)\in2^{\{1,\dots,\nicefrac{(p-1)}{2}\}}\times\overline{C}_p\times\overline{C}_q$, so that $\Psi(X,\alpha,\beta)=\mu\boxplus\sigma$, where $\mu$ is the bar partition with $\overline{\mu}_q=\Upsilon_{p,q}$, $\mu^{(0\text{ mod }q)}=\alpha$, and $\mu^{(j\text{ mod }q)}=\gamma$ when $j\not\equiv0$ (mod $q$), for some self-conjugate $p$-core partition $\gamma$ such that 
$$\mathcal{B}^\gamma_{\nicefrac{(1-p)}{2}}\subseteq\mathcal{A}(\alpha)\subseteq\mathcal{B}^\gamma_{\nicefrac{(p+1)}{2}}\text{ and }\alpha_iq+p\in\mathcal{A}(\mu),i\in\{1,\dots,\nicefrac{(p-1)}{2}\}\Leftrightarrow i\in X,$$ 
and $\sigma$ is the bar partition with $\overline{\sigma}_p=\Upsilon_{p,q}$ and 
$$\sigma^{(j\text{ mod }p)}=\begin{cases}\beta&\text{ if } j\equiv0\:(\text{mod }p),\\
D(\beta)&\text{ if } j\equiv iq\:(\text{mod }p)\text{ for some }1\leq i\leq\tfrac{p-1}{2},\\
D(\beta)'&\text{ otherwise.}\end{cases}$$ 
Then since $\mu\boxplus\sigma$ shares a $q$-quotient with $\mu$ and has the same $p$-quotient as $\sigma$ up to reordering, we find that $\Phi(\mu\boxplus\sigma)=(X,\alpha,\beta)$. Hence $\Phi$ and $\Psi$ are mutual inverses, and thus bijections. 

\vspace{3mm}

(2) Let $\rho=\lambda^{(0\text{ mod }q)}$, so that $\mathcal{A}(\lambda)\cap q\mathbb{Z}=\{xq|x\in\mathcal{A}(\rho)\}$. Since $xq\equiv iq$ (mod $p)$ $\Leftrightarrow$ $x\equiv i$ (mod $p$), for each $i\in\{0,\dots,\nicefrac{(p-1)}{2}\}$, by the definition of the $\mathfrak{W}_p$-action we have  
$$\mathcal{A}(\delta_i\lambda)\cap q\mathbb{Z}=\delta_i\mathcal{A}(\lambda)\cap q\mathbb{Z}=\{(\delta_ix)q|x\in\mathcal{A}(\rho)\}=\{xq|x\in\mathcal{A}(\delta_i\rho)\},$$ 
thus $(\delta_i\lambda)^{(0\text{ mod }q)}=\delta_i(\lambda^{(0\text{ mod }q)})$ for each $i$, where $\delta_i$ acts at level $q$ on $\lambda$, and at level 1 on $\rho$. The result follows for all $a\in\mathfrak{W}_p$. 
\end{proof}

\begin{exam}
We calculate part of $\overline{C}^\Upsilon_{3,5}$, the orbit of $\Upsilon_{3,5}=(2)$ under the action of $\mathfrak{W}_3\times\mathfrak{W}_5$, to illustrate the bijection between this orbit and the set $2^{\{1\}}\times\overline{C}_3\times\overline{C}_5$. The top part of Figure \ref{bij1} below shows the level 1 action of $\mathfrak{W}_5=\langle\delta_0,\delta_1,\delta_2\rangle$ on $\overline{C}_5$, and on the left, the level 1 action of $\mathfrak{W}_3=\langle\epsilon_0,\epsilon_1\rangle$ on $\overline{C}_3$: 
\begin{align*} 
&\delta_0x=\begin{cases}x-2&x\equiv1\:(\text{mod }5),\\
x+2&x\equiv4\:(\text{mod }5),\\
x&\text{otherwise;}\end{cases}
&&\delta_1x=\begin{cases}x-1&x\equiv2,4\:(\text{mod }5),\\
x+1&x\equiv1,3\:(\text{mod }5),\\
x&\text{otherwise;}\end{cases}\\
&\delta_2x=\begin{cases}x-1&x\equiv3\:(\text{mod }5),\\
x+1&x\equiv2\:(\text{mod }5),\\
x&\text{otherwise;}\end{cases}\\
&\epsilon_0x=\begin{cases}x-2&x\equiv1\:(\text{mod }3),\\
x+2&x\equiv2\:(\text{mod }3),\\
x&\text{otherwise;}\end{cases}
&&\epsilon_1x=\begin{cases}x-1&x\equiv2\:(\text{mod }3),\\
x+1&x\equiv1\:(\text{mod }3),\\
x&\text{otherwise.}\end{cases}
\end{align*}
The edges in the main part of the diagram represent the level 3 action of the generators $\delta_0,\delta_1,\delta_2$ of $\mathfrak{W}_5$, and the level 5 action of the generators $\epsilon_0,\epsilon_1$ of $\mathfrak{W}_3$: 
\begin{align*} 
&\delta_0x=\begin{cases}x-6&x\equiv3\:(\text{mod }5),\\
x+6&x\equiv2\:(\text{mod }5),\\
x&\text{otherwise;}\end{cases}
&&\delta_1x=\begin{cases}x-3&x\equiv1,2\:(\text{mod }5),\\
x+3&x\equiv3,4\:(\text{mod }5),\\
x&\text{otherwise;}\end{cases}\\
&\delta_2x=\begin{cases}x-3&x\equiv4\:(\text{mod }5),\\
x+3&x\equiv1\:(\text{mod }5),\\
x&\text{otherwise;}\end{cases}\\
&\epsilon_0x=\begin{cases}x-10&x\equiv2\:(\text{mod }3),\\
x+10&x\equiv1\:(\text{mod }3),\\
x&\text{otherwise;}\end{cases}
&&\epsilon_1x=\begin{cases}x-5&x\equiv1\:(\text{mod }3),\\
x+5&x\equiv2\:(\text{mod }3),\\
x&\text{otherwise.}\end{cases}
\end{align*}

\begin{figure}[p]
\newcommand\drawbw[1]{\draw[line width=1.5pt,white]{#1};\draw{#1};}
\newcommand\drawbwd[1]{\draw[line width=1.5pt,white,dashed]{#1};\draw[dashed]{#1};}
{\footnotesize\[\tdplotsetmaincoords{67.5}{0}
\begin{tikzpicture}[tdplot_main_coords,scale=0.041]
\tikzstyle{every node}=[fill=white,inner sep=0.5pt]
\coordinate(bb)at(0.825cm,-2cm,0cm);
\coordinate(aa)at(2.375cm,0cm,0cm);
\coordinate(cc)at(0cm,0cm,-2.375cm);
\coordinate(sb)at($.5*(bb)$);
\coordinate(sa)at($.5*(aa)$);
\coordinate(sc)at($.5*(cc)$);
\coordinate(qc)at($.25*(cc)$);
\draw($-.75*(aa)-.25*(cc)$)--++(qc)node{$(1)$}--++(sc)node[midway,above right,pos=1]{\scriptsize$\epsilon_0$}--++(sc)node{$\varnothing$}--++(sc)node[midway,above right,pos=1]{\scriptsize$\epsilon_1$}--++(sc)node{$\varnothing$}--++(sc)node[midway,above right,pos=1]{\scriptsize$\epsilon_0$}--++(sc)node{$(1)$}--++(sc)node[midway,above right,pos=1]{\scriptsize$\epsilon_1$}--++(sc)node{$(2)$}--++($.25*(cc)$);
\draw[dashed]($-.75*(aa)-.5*(cc)$)--++($.25*(cc)$);
\draw[dashed]($-.75*(aa)+4.25*(cc)$)--++($.25*(cc)$);
\foreach\x in {-1.5}{
\drawbw{($\x*(cc)$)--++(sa)node[midway,above right,pos=1]{\scriptsize$\delta_0$}--++(sa)--++(sa)node[midway,above right,pos=1]{\scriptsize$\delta_1$}--++(sa)--++(sa)node[midway,above right,pos=1]{\scriptsize$\delta_0$}--++(sa)}
\drawbw{($\x*(cc)+2*(aa)$)--++(sb)node[midway,above right,pos=1]{\scriptsize$\delta_2$}--++(sb)}
\drawbw{($\x*(cc)+3*(aa)$)--++(sb)node[midway,above right,pos=1]{\scriptsize$\delta_2$}--++(sb)}
\draw($\x*(cc)$)node{$\varnothing$}++(sa)++(sa)node{$(1)$}++(sa)++(sa)node{$(2)$}++(sa)++(sa)node{$(2,1)$};
\drawbw{($\x*(cc)+2*(aa)+(bb)$)--++(sa)node[midway,above right,pos=1]{\scriptsize$\delta_0$}--++(sa)--++($.25*(aa)$)}
\drawbwd{($\x*(cc)+2.25*(aa)+2*(bb)$)--++($.25*(aa)$)}
\drawbw{($\x*(cc)+2*(aa)+(bb)$)--++(sb)node[midway,above right,pos=1]{\scriptsize$\delta_1$}--++(sb)}
\draw($\x*(cc)+2*(aa)+(bb)$)node{$(3)$}++(sa)++(sa)node{$(3,1)$};
\drawbw{($\x*(cc)+2*(aa)+2*(bb)$)--++($.25*(aa)$)}
\drawbwd{($\x*(cc)+3.25*(aa)+(bb)$)--++($.25*(aa)$)}
\draw($\x*(cc)+2*(aa)+2*(bb)$)node{$(4)$};
}
\foreach\x in{0,1,2,3}{
\drawbwd{($\x*(aa)$)++($4.25*(cc)$)--++($.25*(cc)$)}
\drawbwd{($\x*(aa)-.5*(cc)$)--++($.25*(cc)$)}
}
\foreach\x in{1,2,3}{
\drawbw{($\x*(aa)-.25*(cc)$)--++($4.5*(cc)$)}
}
\drawbw{($-.25*(cc)$)--++(qc)--++(sc)node[midway,above right,pos=1]{\scriptsize$\epsilon_0$}--++(cc)node[midway,above right,pos=1]{\scriptsize$\epsilon_1$}--++(cc)node[midway,above right,pos=1]{\scriptsize$\epsilon_0$}--++(cc)node[midway,above right,pos=1]{\scriptsize$\epsilon_1$}--++($.75*(cc)$)}
\foreach\x in{0,1,3,4}{
\drawbw{($\x*(cc)$)--++($3*(aa)$)}
}
\drawbw{($2*(cc)$)--++(sa)node[midway,above right,pos=1]{\scriptsize$\delta_0$}--++(sa)--++(sa)node[midway,above right,pos=1]{\scriptsize$\delta_1$}--++(sa)--++(sa)node[midway,above right,pos=1]{\scriptsize$\delta_0$}--++(sa)}
\foreach\x in{2,3}{
\foreach\y in{0,1,3,4}{
\drawbw{($\y*(cc)+\x*(aa)$)--++(bb)}
}
}
\drawbw{($2*(aa)+2*(cc)$)--++(sb)node[midway,above right,pos=1]{\scriptsize$\delta_2$}--++(sb)}
\drawbw{($3*(aa)+2*(cc)$)--++(sb)node[midway,above right,pos=1]{\scriptsize$\delta_2$}--++(sb)}
\draw(0,0,0)node{$(17,14,11,8,5,2)$}++(aa)node{$(23,14,11,8,5,3,2)$}++(aa)node{$(26,17,11,8,6,5)$}++(aa)node{$(26,23,11,6,5,3,2)$};
\draw(cc)node{$(7,4,1)$}++(aa)node{$(13,4,3,1)$}++(aa)node{$(16,7,6,1)$}++(aa)node{$(16,13,6,3,1)$};
\draw($2*(cc)$)node{$\Upsilon_{3,5}=(2)$}++(aa)node{$(8,3)$}++(aa)node{$(11,6,2,1)$}++(aa)node{$(11,8,6,3,1)$};
\draw($3*(cc)$)node{$(5,2)$}++(aa)node{$(8,5,3)$}++(aa)node{$(11,6,5,2,1)$}++(aa)node{$(11,8,6,5,3,1)$};
\draw($4*(cc)$)node{$(10,7,4,1)$}++(aa)node{$(13,10,4,3,1)$}++(aa)node{$(16,10,7,6,1)$}++(aa)node{$(16,13,10,6,3,1)$};
\foreach\x in{0,1,2,3,4}{
\drawbwd{($\x*(cc)+3.25*(aa)+(bb)$)--++($.25*(aa)$)}
}
\foreach\x in{0,1,3,4}{
\drawbw{($\x*(cc)+2*(aa)+(bb)$)--++($1.25*(aa)$)}
}
\drawbw{($2*(aa)+(bb)+2*(cc)$)--++(sa)node[midway,above right,pos=1]{\scriptsize$\delta_0$}--++(sa)--++($.25*(aa)$)}
\foreach\x in{2,3}{
\drawbwd{($\x*(aa)+(bb)+4.25*(cc)$)--++($.25*(cc)$)}
\drawbwd{($\x*(aa)+(bb)-.5*(cc)$)--++($.25*(cc)$)}
\drawbw{($\x*(aa)+(bb)-.25*(cc)$)--++($4.5*(cc)$)}
}
\foreach\x in{0,1,3,4}{
\drawbw{($\x*(cc)+2*(aa)+(bb)$)--++(bb)}
}
\drawbw{($2*(aa)+(bb)+2*(cc)$)--++(sb)node[midway,above right,pos=1]{\scriptsize$\delta_1$}--++(sb)}
\draw(aa)++(aa)++(bb)node{$(29,17,14,9,8,5)$}++(cc)node{$(19,9,7,4)$}++(cc)node{$(14,9,4,2)$}++(cc)node{$(14,9,5,4,2)$}++(cc)node{$(19,10,9,7,4)$};
\draw(aa)++(aa)++(aa)++(bb)node{$(29,23,14,9,5,3,2)$}++(cc)node{$(19,13,9,4,3)$}++(cc)node{$(14,9,8,4,3)$}++(cc)node{$(14,9,8,5,4,3)$}++(cc)node{$(19,13,10,9,4,3)$};
\drawbwd{($2*(aa)+2*(bb)-.5*(cc)$)--++($.25*(cc)$)}
\drawbwd{($2*(aa)+2*(bb)+4.25*(cc)$)--++($.25*(cc)$)}
\drawbw{($2*(aa)+2*(bb)-.25*(cc)$)--++($4.5*(cc)$)}
\foreach\x in{0,1,2,3,4}{
\drawbwd{($\x*(cc)+2.5*(aa)+2*(bb)$)--++($.25*(aa)$)}
\drawbw{($\x*(cc)+2*(aa)+2*(bb)$)--++($.5*(aa)$)}
}
\draw(aa)++(aa)++(bb)++(bb)node{$(32,17,14,12,11,5,2,1)$}++(cc)node{$(22,12,7,4,2,1)$}++(cc)node{$(17,12,7,2)$}++(cc)node{$(17,12,7,5,2)$}++(cc)node{$(22,12,10,7,4,2,1)$};
\end{tikzpicture}
\]}
\caption{The bijection between $\overline{C}^\Upsilon_{3,5}$ and $2^{\{1\}}\times\overline{C}_3\times\overline{C}_5$}
\label{bij1}
\end{figure}

Below are some abacus displays to illustrate the bijection $\overline{C}^\Upsilon_{3,5}\to2^{\{1\}}\times\overline{C}_3\times\overline{C}_5$: 
\begin{align*}
&(X\in2^{\{-2\}},\varnothing,\varnothing):&&\Upsilon_{3,5}=(2)&&(7,4,1)\\
&&&\abacus(vvvvv,bbbbb,bbbbb,nbonb,nnnnn,nnnnn,vvvvv)&&\abacus(vvvvv,bbbbb,nbbnb,bnobn,nbnnb,nnnnn,vvvvv)\\
&(X\in2^{\{8\}},(1),(2)):&&(11,6,5,2,1)&&(26,17,11,8,6,5)\\
&&&\abacus(vvvvv,bbbbb,bbbbb,bbbbb,bbbbb,bnbbb,bnnbb,nnobb,nnbbn,nnnbn,nnnnn,nnnnn,nnnnn,nnnnn,vvvvv)&&\abacus(vvvvv,bbbbb,bnbbb,bbbbb,nbbbb,bnbbn,bnnbb,bbonn,nnbbn,bnnbn,nnnnb,nnnnn,nnnbn,nnnnn,vvvvv)
\end{align*}
The following abacus displays illustrate the bijection $\overline{C}^\Upsilon_{5,3}\to2^{\{1,2\}}\times\overline{C}_5\times\overline{C}_3$: 
\begin{align*}
&(X\in2^{\{2,-1\}},\varnothing,\varnothing):&&\Upsilon_{5,3}=(1)&&(4)&&(7,2,1)&&(7,4,2)\\
&&&\abacus(vvv,bbb,bbb,bbb,nob,nnn,nnn,nnn,vvv)&&\abacus(vvv,bbb,bbb,nbb,bon,nnb,nnn,nnn,vvv)&&\abacus(vvv,bbb,nbb,bbn,nob,bnn,nnb,nnn,vvv)&&\abacus(vvv,bbb,nbb,nbn,bon,bnb,nnb,nnn,vvv)\\
&(X\in2^{\{11,2\}},(2),\varnothing):&&(6,1)&&(7,6,2,1)&&(16,11,6,1)&&(16,11,7,6,2,1)\\
&&&\abacus(vvv,bbb,bbb,bbb,bbb,bnb,bbb,nob,nnn,nbn,nnn,nnn,nnn,nnn,vvv)&&\abacus(vvv,bbb,bbb,bbb,bbb,nnb,bbn,nob,bnn,nbb,nnn,nnn,nnn,nnn,vvv)&&\abacus(vvv,bbb,nbb,bbn,bbb,bnb,bbb,nob,nnn,nbn,nnn,bnn,nnb,nnn,vvv)&&\abacus(vvv,bbb,nbb,bbn,bbb,nnb,bbn,nob,bnn,nbb,nnn,bnn,nnb,nnn,vvv)\end{align*}

The orbit of $\Upsilon_{5,3}=(1)$ under the action of $\mathfrak{W}_3\times\mathfrak{W}_5$ is in bijection with the set $2^{\{1,2\}}\times\overline{C}_3\times\overline{C}_5$. In Figure \ref{bij2}, we illustrate part of this orbit; the edges in this diagram represent the level 3 action of $\mathfrak{W}_5=\langle\delta_0,\delta_1,\delta_2\rangle$, and for each $\lambda\in\mathcal{P}_2$ in the diagram, $\lambda^{(0\text{ mod }5)}=\varnothing$. 

\begin{figure}[p]
\newcommand\drawbw[1]{\draw[line width=1.5pt,white]{#1};\draw{#1};}
\newcommand\drawbwd[1]{\draw[line width=1.5pt,white,dashed]{#1};\draw[dashed]{#1};}
{\footnotesize\[\tdplotsetmaincoords{67.5}{0}
\begin{tikzpicture}[tdplot_main_coords,scale=0.041]
\tikzstyle{every node}=[fill=white,inner sep=0.5pt]
\coordinate(bb)at(0.825cm,-2cm,0cm);
\coordinate(aa)at(2.375cm,0cm,0cm);
\coordinate(cc)at(0cm,0cm,-2.375cm);
\coordinate(sb)at($.5*(bb)$);
\coordinate(sa)at($.5*(aa)$);
\coordinate(sc)at($.5*(cc)$);
\coordinate(la)at(2.5cm,0cm,0cm);
\coordinate(na)at($.445*(aa)$);
\coordinate(ma)at($.555*(aa)$);
\drawbw{(0,0,0)--++(sc)node[midway,above right,pos=1]{\scriptsize$\delta_2$}--++(cc)node[midway,above right,pos=1]{\scriptsize$\delta_1$}--++(cc)node[midway,above right,pos=1]{\scriptsize$\delta_2$}--++(sc)}
\foreach\x in{0,2}{
\drawbw{($(aa)+\x*(cc)$)--++(sc)node[midway,above right,pos=1]{\scriptsize$\delta_2$}--++(sc)}
}
\foreach\x in{0,1,2,3}{
\drawbw{($\x*(cc)$)--++(sa)node[midway,above right,pos=1]{\scriptsize$\delta_0$}--++(sa)--++(sa)node[midway,above right,pos=1]{\scriptsize$\delta_1$}--++(sa)--++(na)node[midway,above right,pos=1]{\scriptsize$\delta_0$}--++(ma)}
}
\foreach\x in{2,3}{
\foreach\y in{0,1,2,3}{
\drawbw{($\y*(cc)+\x*(aa)$)--++(sb)node[midway,above right,pos=1]{\scriptsize$\delta_2$}--++(sb)}
}
}
\draw(0,0,0)node{$\Upsilon_{5,3}=(1)$}++(aa)node{$(3,1)$}++(aa)node{$(6,1)$}++(aa)node{$(6,3,1)$};
\draw(cc)node{$(4)$}++(aa)node{$(4,3)$}++(aa)node{$(7,6,2,1)$}++(aa)node{$(13,8,6,3,1)$};
\draw($2*(cc)$)node{$(7,2,1)$}++(aa)node{$(13,8,3,1)$}++(aa)node{$(16,11,6,1)$}++(aa)node{$(16,11,6,3,1)$};
\draw($3*(cc)$)node{$(7,4,2)$}++(aa)node{$(13,8,4,3)$}++(aa)node{$(16,11,7,6,2,1)$}++(la)node{$(16,13,11,8,6,3,1)$};
\foreach\x in{0,1,2,3}{
\drawbwd{($\x*(cc)+3.25*(aa)+(bb)$)--++($.25*(aa)$)}
}
\foreach\x in{0,1,2,3}{
\drawbw{($\x*(cc)+2*(aa)+(bb)$)--++(na)node[midway,above right,pos=1]{\scriptsize$\delta_0$}--++(ma)--++($.25*(aa)$)}
}
\foreach\x in{0,1,2,3}{
\drawbw{($\x*(cc)+2*(aa)+(bb)$)--++(sb)node[midway,above right,pos=1]{\scriptsize$\delta_1$}--++(sb)}
}
\draw(aa)++(aa)++(bb)node{$(9,4)$}++(cc)node{$(9,7,4,2)$}++(cc)node{$(19,14,9,4)$}++(cc)node{$(19,14,13,9,8,4,3)$};
\draw(aa)++(aa)++(aa)++(bb)node{$(9,4,3)$}++(cc)node{$(13,9,8,4,3)$}++(cc)node{$(19,14,9,4,3)$}++(cc)node{$(19,14,13,9,8,4,3)$};
\foreach\x in{0,1,2,3}{
\drawbwd{($\x*(cc)+2.5*(aa)+2*(bb)$)--++($.25*(aa)$)}
\drawbw{($\x*(cc)+2*(aa)+2*(bb)$)--++($.5*(aa)$)}
}
\draw(aa)++(aa)++(bb)++(bb)node{$(12,7,2,1)$}++(cc)node{$(12,7,4,2)$}++(cc)node{$(22,17,12,7,2,1)$}++(cc)node{$(22,17,12,7,4,2)$};
\end{tikzpicture}
\]}
\caption{The branch of $\overline{C}^\Upsilon_{5,3}$ corresponding to the 3-bar-core $\varnothing$}
\label{bij2}
\end{figure}
\end{exam}

\bibliographystyle{amsplain-ac}
\bibliography{researchbib}
\end{document}